\documentclass[11pt, a4paper]{amsart}

\usepackage{amssymb,amsfonts, amsmath, amsthm}
\usepackage[all,arc]{xy}
\usepackage{enumerate}
\usepackage{mathrsfs}
\usepackage{mathtools}
\usepackage{enumitem}
\usepackage[mathscr]{euscript}
\usepackage{array}
\usepackage{tikz}
\usepackage{tikz-cd}
\usepackage{textcomp}
\usepackage[pagebackref, colorlinks=true, linkcolor=blue, citecolor = red]{hyperref}
\usetikzlibrary{patterns}
\usepackage[nameinlink,capitalise,noabbrev]{cleveref}
\crefname{subsection}{Subsection}{Subsection}

\usepackage[margin=2cm]{geometry}

\renewcommand*{\backref}[1]{}
\renewcommand*{\backrefalt}[4]{({%
		\ifcase #1 Not cited.%
		\or On p.~#2%
		\else On pp.~#2%
		\fi%
	})}
    
\newcommand{\sC}{\mathscr{C}}
\newcommand{\cF}{\mathcal{F}}
\newcommand{\bN}{\mathbb{N}}
\newcommand{\sX}{\mathscr{X}}

\newcommand{\Open}{\mathscr{O}\mathrm{pen}}
\newcommand{\corp}{\mathrm{corp}}
\newcommand{\op}{\mathrm{op}}
\newcommand{\Map}{\mathrm{Map}}
\newcommand{\Fun}{\mathrm{Fun}}
\newcommand{\colim}{\operatorname*{colim}}
\newcommand{\Shv}{\mathscr{S}\mathrm{hv}}
\newcommand{\Cond}{\mathscr{C}\mathrm{ond}}
\newcommand{\An}{\mathscr{A}\mathrm{n}}
\newcommand{\Top}{\mathscr{T}\mathrm{op}}
\newcommand{\Set}{\mathscr{S}\mathrm{et}}
\newcommand{\CHaus}{\mathscr{C}\mathscr{H}\mathrm{aus}}

\newcommand{\ProFin}{\mathscr{P}\mathrm{ro}\mathscr{F}\mathrm{in}}
\newcommand{\ExtrDisc}{\mathscr{E}\mathrm{xtr}\mathscr{D}\mathrm{isc}}
\newcommand{\Clopen}{\mathscr{C}\mathrm{lopen}}
\newcommand{\open}{\mathrm{open}}
\newcommand{\ad}{\mathrm{ad}}
\newcommand{\id}{\mathrm{id}}

\newcommand{\inj}{\mathrm{inj}}

\newcommand{\Sl}{\mathscr{S}\mathrm{l}}

\newcommand{\yo}{\text{\usefont{U}{min}{m}{n}\symbol{'210}}}

\DeclareFontFamily{U}{min}{}
\DeclareFontShape{U}{min}{m}{n}{<-> udmj30}{}

\newtheorem{thm}[equation]{Theorem}
\newtheorem{lemma}[equation]{Lemma}
\newtheorem{prop}[equation]{Proposition}
\newtheorem{cor}[equation]{Corollary}

\newtheorem{mainthm}{Theorem}

\theoremstyle{definition}
\newtheorem{dfn}[equation]{Definition}
\newtheorem{ex}[equation]{Example}

\theoremstyle{remark}
\newtheorem{rem}[equation]{Remark}

\newtheorem{conj}[equation]{Conjecture}
\newtheorem{notn}[equation]{Notation}

\numberwithin{equation}{section}

\title{Fractured Structures in Condensed Mathematics}

\date{March 2026}
\author{Nima Rasekh}
\address{Institut f{\"u}r Mathematik und Informatik, Universit{\"a}t Greifswald, Greifswald, Germany}
\email{nima.rasekh@uni-greifswald.de}

\author{Qi Zhu}
\address{Max Planck Institute for Mathematics, 53111 Bonn, Germany}
\email{qzhu@mpim-bonn.mpg.de}

\subjclass[2020]{18N60, 18B25, 18F10, 18F20, 18F60, 06E15}
\keywords{$\infty$-category theory, $\infty$-topos theory, condensed mathematics, fractured $\infty$-topoi}

\begin{document}

\begin{abstract}
	We construct a fractured structure, in the sense of Lurie \cite{lurie2018sag}, on the $\infty$-topos of condensed anima. This fractured structure allows us to better comprehend various properties of condensed anima -- we use it to exhibit an explicit collection of jointly conservative points for $\Cond(\An)$. To rule out further candidates for fractured structures, we analyze limits in the category of extremally disconnected spaces $\ExtrDisc$. In particular, we show that $\ExtrDisc$ does not admit all fibers, answering a question from Clausen.
\end{abstract}

\maketitle
\tableofcontents

\vspace{-1em}
\begin{figure}[ht!]
\centering
\includegraphics[width=64mm]{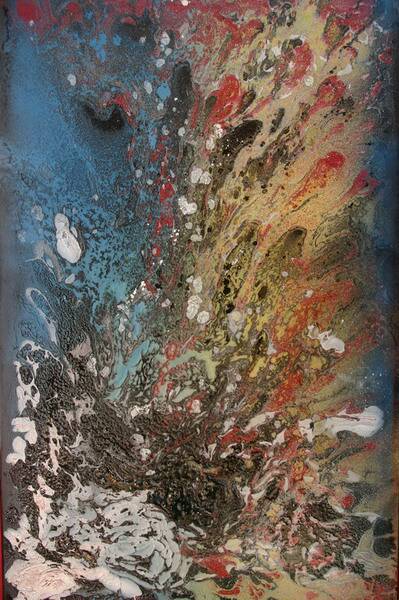}
\caption{\emph{The Fractured Mind} by Rob Crane from \href{https://newirishart.com/artworks/rob-crane-the-fractured-mind-rc0109.htm}{New Irish Art}.}
\end{figure}
\newpage

\section{Introduction}

\addtocontents{toc}{\protect\setcounter{tocdepth}{1}}

\subsection{From petit vs.~gros to fractured topoi} \label{subsec:gros and petit}
Topos theory was introduced as a categorical method to study topologies in a variety of settings. However, somewhat surprisingly, it permitted two parallel approaches. On the one hand, there are topoi which capture the data of one particular space, such as the topos of sheaves on a topological space or the étale topos of a scheme. On the other hand, there are topoi which include many spaces at once, such as sheaves on the site of all topological spaces or sheaves on the site of all schemes. Motivated by their French origin, these two classes of topoi are often informally referred to as \emph{petit} and \emph{gros} topoi, respectively. 

\medskip \noindent As a result, right from the start a central theme in topos theory has been to bridge these two parallel approaches. For example, already in \cite[Expos\'e VII, Proposition 4.1]{grothendieck1972sga4tome2} the authors observe that {\'e}tale sheaves on a scheme $X$ sit fully faithfully inside the big {\'e}tale topos sliced over $X$, resulting in the following crucial observation \cite[Exposé VII, under Proposition 4.1]{grothendieck1972sga4tome2}:
\begin{quote}
    \emph{Pour l'\'etude de la cohomologie des faisceaux, il est essentiellement \'equivalent de travailler avec le petit site \'etale $X_{\mathrm{\acute{e}t}}$, ou le gros site \'etale $(\mathrm{Sch}/X)_{\mathrm{\acute{e}t}}$.}
\end{quote}
They establish a similar relationship between the petit and gros topos of a topological space \cite[Expos\'e IV, 4.10.4]{grothendieck1972sga4tome1}. Other examples of this phenomenon include the Zariski site \cite[\href{https://stacks.math.columbia.edu/tag/020Z}{Tag 020Z}]{stacksproject2026}, the crystalline site \cite[\href{https://stacks.math.columbia.edu/tag/07IJ}{Tag 07IJ}]{stacksproject2026}, and the pro-{\'e}tale site \cite[\href{https://stacks.math.columbia.edu/tag/098Q}{Tag 098Q}]{stacksproject2026}, among others.

\medskip \noindent It is hence unsurprising that various efforts have been made to axiomatize the relation between petit and gros topoi. One successful method is the theory of \emph{fractured $\infty$-topoi} due to Lurie \cite{lurie2018sag}, motivated by Carchedi \cite{carchedi2020topoi}.\footnote{Another related approach is given by \emph{cohesive $\infty$-topoi}, which will not be our focus here. See \cref{rem: fractured topoi via cohesive topoi} for a discussion of the relationship between these two approaches.} A \emph{fractured structure} on an $\infty$-topos $\sX$ is a (not necessarily full) subcategory $\sX^{\corp}$, such that the inclusion functor admits a right adjoint satisfying various axioms. See \cref{dfn: fractured topos} for a detailed definition. The conditions in particular imply that for an object $X$ in $\sX^{\corp}$, the slice category $\sX^{\corp}_{/X}$ sits fully faithfully inside $\sX_{/X}$ (\cref{rem:ff}). Thus, thinking intuitively of $\sX$ as a gros topos and $\sX^{\corp}$ as a petit topos, the fractured structure axiomatically realizes the anticipated relationship between gros and petit topoi in a very general setting. 

\medskip \noindent As one might expect, many situations where gros and petit topoi occur are in fact examples of fractured $\infty$-topoi. This includes some of the aforementioned examples, such as the classical Zariski site and \'etale site \cite[Examples 20.6.4.1 and 20.6.4.2]{lurie2018sag}, but also their analogues from spectral algebraic geometry \cite[Examples 20.6.4.4 and 20.6.4.5]{lurie2018sag}. Beyond algebraic geometry, further examples of fractured structures have been exhibited in differential geometry \cite{clough2024diff1, clough2024diff2}, global equivariant homotopy theory \cite{cnossenlenzlinskens2025partial}, and the theory of six functor formalisms \cite[Section 7.3]{cnossenhoyois2026htt}.

\subsection{Condensed anima as a gros topos} \label{subsec:condensed intro}
The idea of a topological space is ubiquitous in mathematics and plays a prominent role in algebra, for instance via topological abelian groups. These are notoriously ill-behaved, and in particular do not form an abelian category, a crucial requirement to perform homological algebra. This has impeded progress in areas in which topological groups naturally occur, such as analytic geometry. Solving this issue resulted in the theory of \emph{condensed mathematics}, developed by Clausen and Scholze \cite{scholze2019condensed}, and independently by Barwick and Haine \cite{barwickhaine2019pyknotic}, leading to successful applications in analytic geometry \cite{clausenscholze2020analytic}.

\medskip \noindent Fundamentally, the success of condensed mathematics is due to the power of topos theory. By isolating a desirable class of topological spaces, in this case extremally disconnected compact Hausdorff spaces, denoted $\ExtrDisc$, and then passing to sheaves on them, one obtains the $\infty$-topos of \emph{condensed anima} 
\[ \Cond(\An) \coloneqq \Shv_{\An}(\ExtrDisc). \] 
We will give a more detailed review in \cref{subsec:condensed anima}. This construction simultaneously includes examples of interest, in particular all compactly generated topological spaces, while gaining many desirable properties inherent to sheaf categories. Historically, such methods can already be seen in the work of Johnstone, who introduced the topological topos with similar aims \cite{johnstone1979topologicaltopos}. The theory has risen to prominence in the last few years and has found applications in various areas of mathematics, such as in algebra \cite{aoki2024semitopologicalktheorysolidification, brink2025condensedgroupcohomology}, analysis \cite{scholze2019condensed}, geometry \cite{clausenscholze2020analytic,clausenscholze2022condensed,haine2025condensedhomotopytypescheme}, representation theory \cite{heyermann2024sixfunctorformalismssmoothrepresentations}, set theory \cite{bergfalk2025whiteheadsproblemcondensedmathematics} and topology \cite{mor2023picardbrauergroupsknlocal,clausen2025dualitylinearizationpadiclie,mair2025thesis}.

\medskip \noindent Condensed anima include all (compactly-generated) topological spaces \cite[Proposition 1.7]{scholze2019condensed}. Hence, from the perspective of \cref{subsec:gros and petit}, condensed anima exhibit the behavior of a gros topos. It is therefore natural to ask whether there is a subcategory of condensed anima that gives a fractured structure, making our intuition regarding condensed anima as a gros topos into a rigorous statement. 

\subsection{Fractured structure on condensed anima}
In this article, we realize this vision and construct a fractured structure on condensed anima. With $\Cond(\An) = \Shv(\ExtrDisc)$, we choose the collection of open embeddings on extremally disconnected spaces as a base for the corresponding petit topos. Let $\ExtrDisc^{\open}$ denote the wide subcategory of extremally disconnected spaces spanned by the open embeddings, equipped with a Grothendieck topology given by finitely jointly surjective families of open embeddings. Our main theorem is the following. 
\begin{mainthm}[\cref{thm:main}]
	The left Kan extension of the restriction functor
	\[ \Cond^{\open}(\An) \coloneqq \Shv(\ExtrDisc^{\open}) \longrightarrow \Cond(\An), \] 
	induces a fractured $\infty$-topos structure.
\end{mainthm}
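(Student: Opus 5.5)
Throughout, write $j\colon \ExtrDisc^{\open}\to\ExtrDisc$ for the (non-full) inclusion of the wide subcategory of open embeddings. The plan is to use Lurie's recognition criterion for fractured $\infty$-topoi coming from \emph{geometry-like} data, \cite[Proposition 20.6.2.1]{lurie2018sag} and its corollaries. That result reduces the claim to showing that the pair $(\ExtrDisc, \ExtrDisc^{\open})$ is a \emph{$\infty$-category with admissibility structure}, compatible with the topologies. Concretely, Lurie's criterion asks for the following.

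1. $\ExtrDisc^{\open}$ contains all equivalences and is closed under composition, and it satisfies the two-out-of-three property in the form: if $g\circ f$ and $g$ are admissible, then so is $f$.
2. Admissible morphisms are stable under pullback along arbitrary morphisms of $\ExtrDisc$. In particular, these pullbacks exist in $\ExtrDisc$ even though $\ExtrDisc$ lacks general limits.
3. The topology on $\ExtrDisc$ is generated by admissible covers. Equivalently, every covering sieve of $\ExtrDisc$ contains, or is refined by, one generated by a finite jointly surjective family of open embeddings, and such families are covers in $\ExtrDisc$.

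Once these hold, the left Kan extension $j_!$ of the restriction is the functor Lurie produces. The fractured axioms then follow formally: $j_!$ is faithful on the relevant mapping spaces, it preserves pullbacks along corporeal morphisms, and the right adjoint $j^*$ preserves colimits and is conservative when combined with pullbacks.

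The first two points are point-set topology. For (1), equivalences are homeomorphisms, hence open embeddings, and composites of open embeddings are open embeddings. For the two-out-of-three property, suppose $g\circ f$ is an open embedding and $g$ is an open embedding. Then $f$ is injective, and it is open, because $f(U)=g^{-1}(g f(U))$ is open. For (2), let $U\hookrightarrow S$ be open with $S,U\in\ExtrDisc$, and let $T\to S$ be arbitrary. Then $U$ is clopen in $S$: it is open by assumption, and it is compact, hence closed. Its preimage $V\subseteq T$ is therefore clopen, so $V$ is again extremally disconnected, since a clopen subset of an extremally disconnected space is extremally disconnected. It remains to check that $V$, with its map to $T$, is the pullback in $\ExtrDisc$ and not merely in topological spaces. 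This is automatic, because the topological pullback already lies in $\ExtrDisc$ and $\ExtrDisc\subseteq\CHaus$ is full.

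Point (3) is where I expect the main obstacle. The Grothendieck topology on $\ExtrDisc$ is generated by finite jointly surjective families. Every surjection onto an extremally disconnected space splits, so it is generated by finite disjoint-union decompositions $S=\coprod S_i$. Each inclusion $S_i\hookrightarrow S$ is clopen, hence admissible. Conversely, a finite jointly surjective family of open embeddings refines to a disjoint clopen decomposition, using the Boolean algebra of clopens. The subtle part is the second condition in Lurie's definition of a geometric admissibility structure. One needs that the topology on $\ExtrDisc^{\open}$ is \emph{exactly} the restriction of the topology on $\ExtrDisc$, so that $j_!$ carries sheaves to sheaves, that is, preserves the sheaf condition. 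I would verify that $j_!$ of a representable is representable, and that $j_!$ commutes with the finite-product sheaf condition, using that both sites are finitary and the covers agree. I would then invoke Lurie's theorem, concluding that $j_!$ exhibits $\Cond^{\open}(\An)$ as the corporeal part of a fractured structure on $\Cond(\An)$.
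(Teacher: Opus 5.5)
Your route is the paper's. You reduce via Lurie's geometric-site theorem to checking that open embeddings form an admissibility structure compatible with the topology. (The relevant result is \cite[Theorem 20.6.3.4]{lurie2018sag}, recalled as \cref{theorem: geometric site}; 20.6.2.1 is the definition of a geometric site.) Your arguments for the triangle condition, for pullback stability (preimages of clopens are clopen, and clopen subspaces of extremally disconnected spaces are extremally disconnected) and for compatibility are the paper's. For compatibility, you split $\coprod_\alpha S_\alpha\to S$ to get a finite clopen decomposition of $S$ whose pieces factor through the $g_\alpha$ and hence lie in the sieve.

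However, your list of hypotheses omits axiom (d) of \cref{definition: admissibility}: admissible morphisms must be closed under retracts in $\Fun([1],\ExtrDisc)$. The paper proves this separately in \cref{lemma:retract}. It is easy, but it must be checked. It follows from your step (2), because open embeddings are monomorphisms and a retract of a monomorphism is a pullback of it. Concretely, take $A\xrightarrow{i_1}S\xrightarrow{p_1}A$ over $B\xrightarrow{i_2}T\xrightarrow{p_2}B$, with $f\colon S\to T$ an open embedding and $g\colon A\to B$. Let $b\colon X\to B$ and $x\colon X\to S$ satisfy $i_2b=fx$. Then $gp_1x=p_2fx=p_2i_2b=b$ and $fi_1p_1x=i_2gp_1x=i_2b=fx$. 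So $i_1p_1x=x$ because $f$ is monic, and the factorization is unique because $i_1$ is split monic. Thus the left square is a pullback, and $g$ is admissible.

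You should delete the second half of your step (3). The check you plan there is not a hypothesis of Lurie's theorem, and it would in fact fail. Compatibility is exactly the condition you had already verified. The only genuine bookkeeping is that Lurie uses the induced topology $\tau^{\ad}$ on $\ExtrDisc^{\open}$. This agrees with finitely jointly surjective families of open embeddings, since every map in the generated sieve factors through a member, so finitely many members are jointly surjective. That is a fair point, which the paper leaves implicit, but it has nothing to do with $j_!$ preserving sheaves.

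The functor in \cref{theorem: geometric site} is the left adjoint on sheaf categories, i.e.\ the sheafification of the presheaf left Kan extension $\operatorname{Lan}_j$. It is the restriction $j^*$ that preserves the sheaf condition. $\operatorname{Lan}_j$ itself does not. Take the terminal sheaf: $\operatorname{Lan}_j(*)(T)$ is the classifying space of the category of maps $T\to U$, with open embeddings under $T$ as morphisms. Since open embeddings are injective, $\{a,b\}\to\{*\}$ and $\id_{\{a,b\}}$ lie in different components. Yet their restrictions to $\{a\}$, and likewise to $\{b\}$, are connected via the inclusion of $\{*\}$ as $a$, resp.\ $b$. So $\operatorname{Lan}_j(*)$ does not preserve finite products.

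With the retract axiom added and that paragraph removed, your argument is complete and coincides with the paper's.
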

\noindent More precisely, we mean that this functor induces an equivalence onto a subcategory of $\Cond(\An)$, and the inclusion of this subcategory is a fractured structure.

\medskip \noindent The philosophy of petit and gros topoi is laid out clearly after passing to slice topoi by the following result. Let $\yo_{\open} \colon \ExtrDisc \to \Cond^{\open}(\An)$ be the sheafification of the Yoneda embedding.\footnote{Here, the sheafification step is in fact non-trivial. See \cref{rem:extrdisc not subcanonical} for further details.}

\begin{mainthm}[{\cref{prop:condopen over yoS}}]
	For every $S \in \ExtrDisc$ there is an equivalence 
	\[ \Cond^{\open}(\An)_{/\yo_{\open} S} \simeq \Shv(S) \]
	of $\infty$-categories.
\end{mainthm}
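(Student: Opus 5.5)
The plan is to use the standard slice-of-a-sheaf-topos argument: for a site $\mathcal{C}$ and an object $c$, one has $\Shv(\mathcal{C})_{/\yo c} \simeq \Shv(\mathcal{C}_{/c})$, where $\mathcal{C}_{/c}$ carries the induced topology. Since $\yo_{\open} S$ is the sheafified representable, it is the left Kan extension of the representable along sheafification. Slicing over a sheafified representable still gives sheaves on the slice site, because $\PShv(\mathcal{C})_{/ h_c} \simeq \PShv(\mathcal{C}_{/c})$ and sheafification commutes with the slice identification: the induced topology on $\mathcal{C}_{/c}$ consists of families that are covering after forgetting to $\mathcal{C}$, and $L(F)\to L(h_c)$ for $F\to h_c$ is computed by the slice sheafification. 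Applied to $\mathcal{C}=\ExtrDisc^{\open}$, this reduces the claim to identifying $\Shv\bigl((\ExtrDisc^{\open})_{/S}\bigr)$ with $\Shv(S)$.

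The second step is to understand the slice $(\ExtrDisc^{\open})_{/S}$. An object is an open embedding $U\hookrightarrow S$ with $U$ extremally disconnected, in particular compact, so $U$ is a clopen subset of $S$. Morphisms are open embeddings over $S$, hence inclusions, so the slice is a poset. Conversely, every clopen subset of an extremally disconnected space is again extremally disconnected and compact. Hence $(\ExtrDisc^{\open})_{/S} \simeq \Clopen(S)$, the poset of clopen subsets. The covering families are the finite families of clopens whose union is the given clopen.

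The third step is to compare $\Shv(\Clopen(S))$, with finite covers, to $\Shv(S)=\Shv(\Open(S))$. Since $S$ is profinite, the clopens form a basis of $S$ closed under finite intersections. Every cover of a clopen by open sets has a finite subcover by clopens, by compactness. The comparison lemma for bases of topological spaces, in its $\infty$-categorical form (e.g.\ Lurie's argument that sheaves on $X$ agree with sheaves on a basis closed under finite intersections), then gives $\Shv(\Open(S))\simeq\Shv(\Clopen(S))$. Here the topology on the basis is the one induced by all open covers, and this coincides with the finite-cover topology by compactness. One point needs care. Since $\Shv(S)$ need not be hypercomplete, I would check that the comparison is stated for the non-hypercomplete sheaf condition via sieves, and that finitely generated sieves generate the same topology; compactness makes every covering sieve contain a finite covering subfamily.

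The main obstacle I expect is the first step. Because $\ExtrDisc^{\open}$ is not subcanonical (as remarked), $\yo_{\open}S$ is not the representable presheaf. I therefore need to verify carefully that slicing the sheaf topos over the sheafified representable agrees with sheaves on the slice site. I would handle this using that $\Shv(\mathcal{C})_{/L h_c}$ is a left exact localization of $\PShv(\mathcal{C})_{/h_c}$, and that the local equivalences there are exactly those generated by the covering sieves in $\mathcal{C}_{/c}$. This is the content of \cite[HTT 6.3.5]{lurie2018sag}-style étale-slice arguments, which I would cite or reprove.
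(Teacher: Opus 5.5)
Your proposal is correct and follows essentially the same route as the paper. The paper composes the same three equivalences: the basis comparison $\Shv(\Open_S,\tau)\simeq\Shv(\Clopen_S,\tau^{\mathrm{fin}})$ via compactness and \cite[Corollary A.8]{aoki2023tens}, the identification $(\ExtrDisc^{\open})_{/S}\simeq\Clopen_S$, and the slice equivalence $\Shv(\ExtrDisc^{\open}_{/S})\simeq\Cond^{\open}(\An)_{/\yo_{\open}S}$, which it simply cites from \cite[Lemma 2.2]{jansen2024stratified`}; the only difference is that you propose to reprove this last step via the left exact localization $\Fun(\mathcal{C}^{\op},\An)_{/h_c}\to\Shv(\mathcal{C})_{/Lh_c}$, and that argument goes through.
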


\noindent The fractured structure along with the above computation allows us to draw a number of formal consequences and better understand condensed anima. For example, following Clough \cite{clough2024diff1}, the framework gives us a new constructive proof for the fact that $\Cond(\An)$ has enough points, and even an explicit collection of such points.

\begin{mainthm}[\cref{thm:formula enough points}]
	The collection of points
	\[ \left\{ \Cond(\An) \longrightarrow \An, \ F \mapsto \colim_{\substack{U \subseteq S \ \mathrm{clopen} \\ s \in U}} F(U) \right\}_{S \in \ExtrDisc, \ s \in S} \]
	is jointly conservative.
\end{mainthm}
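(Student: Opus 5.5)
We reduce joint conservativity on $\Cond(\An)$ to two ingredients: the fractured structure (\cref{thm:main}) and the identification $\Cond^{\open}(\An)_{/\yo_{\open} S} \simeq \Shv(S)$ of \cref{prop:condopen over yoS}. This follows the strategy of Clough. Write $j^* \colon \Cond(\An) \to \Cond^{\open}(\An)$ for the right adjoint of the fractured inclusion $j_!$. By the axioms of a fractured $\infty$-topos, $j^*$ is conservative and preserves small colimits.

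\textbf{Step 1: reduce to $\Cond^{\open}(\An)$.} For $F \in \Cond(\An)$ and $S \in \ExtrDisc$, the value $(j^* F)(S)$ should agree with $F(S)$. This is because $j^*$ restricts along the wide inclusion $\ExtrDisc^{\open} \to \ExtrDisc$, and sheafification is not needed since $F$ already satisfies descent for finite jointly surjective families of open embeddings. I would check this via the description of $j_!$ as left Kan extension on representables. Since $j^*$ is conservative, it then suffices to find jointly conservative points of $\Cond^{\open}(\An)$ whose composite with $j^*$ is the stated formula.

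\textbf{Step 2: pass to slices.} The objects $\yo_{\open} S$, $S \in \ExtrDisc$, generate $\Cond^{\open}(\An)$ under colimits, so the family of pullback functors $G \mapsto G \times \yo_{\open} S$ into the slices is jointly conservative. Pullback along $\yo_{\open} S \to 1$ is étale geometric. Via \cref{prop:condopen over yoS} it lands in $\Shv(S)$, and I expect the composite to be restriction of $G$ to the open subsets of $S$.

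Now $S$ is compact Hausdorff, so by Stone duality every open subset of $S$ is covered by clopens. Hence $\Shv(S)$ is equivalently sheaves on clopens with finite covers. This is a coherent topos, hence has enough points by Deligne's theorem. Concretely, the stalks $x \mapsto \colim_{s \in U} (-)(U)$ over the points $s \in S$ are jointly conservative: $S$ is sober, so the points of $\Shv(S)$ are the points of $S$ (for $\infty$-sheaves one should use hypercomplete sheaves, or rely on $S$ having finite covering dimension, zero in this case, so that $\Shv(S)$ is hypercomplete). Since clopens are cofinal among open neighbourhoods of $s$, the colimit may be taken over clopen $U \ni s$. Composing gives exactly the displayed functors $F \mapsto \colim_{s \in U} F(U)$. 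Each is left exact as a filtered colimit of finite-limit-preserving evaluations, and colimit-preserving by the slice computation, so it is a point.

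\textbf{Main obstacle.} The delicate part is hypercompleteness. Conservativity of stalks in $\Shv(S)$ needs $\Shv(S)$ to be hypercomplete. I would get this from Lurie's result that a paracompact space of covering dimension $\le n$ has hypercomplete sheaf $\infty$-topos (HTT 7.2.3.6 / 7.2.1.12), since a Stone space has dimension $0$. One must also confirm that conservativity transfers through $j^*$ and through the slice family without hypercompleteness issues in $\Cond(\An)$ itself. This is fine because $j^*$ is conservative by the fractured axioms, and the slice family is conservative whenever the $\yo_{\open} S$ cover the terminal object, which they do via the coproduct of all of them.
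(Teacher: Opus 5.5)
Your proposal is correct and follows essentially the same route as the paper. It uses conservativity of $j^*$ from the fractured structure, the étale pullbacks $-\times\yo_{\open}S$ (jointly conservative because the $\yo_{\open}S$ cover the terminal object), the equivalence $\Cond^{\open}(\An)_{/\yo_{\open}S}\simeq\Shv(S)$, and hypercompleteness of $\Shv(S)$ via covering dimension $0$, which makes the stalks at the points $s\in S$ jointly conservative; your explicit checks that $(j^*F)(U)\simeq F(U)$ and that the slice composite is $U\mapsto G(U)$ spell out the formula the paper only asserts at the end.
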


\subsection{The magical topology of extremally disconnected spaces} The choice of open embeddings on extremally disconnected spaces for our petit topos was far from arbitrary. Our last main result aims to show that the construction and study of the fractured structure on $\Cond(\An)$ relies on very specific point-set topological properties of extremally disconnected spaces. 

\medskip \noindent A priori, there are various other constructions one could envision to build fractured structures on condensed anima. One could change the choice of embeddings and even enlarge the base $\ExtrDisc$ to profinite sets or compact Hausdorff spaces, but we show in \cref{corollary: no fractured structure CHaus ProFin} that enlarging the base is not a fruitful approach.

\medskip \noindent Staying within the realm of extremally disconnected spaces, a natural replacement of open embeddings appears to be the collection of all injections. However, we prove in \cref{corollary: ExtrDiscinj not admissibility} that this too does not result in a fractured structure. The main culprit lies in ill-behaved features of the category $\ExtrDisc$ that are imposed by the point-set topological features of extremally disconnected spaces. More precisely, the failure lies in the following conjecture proposed by Clausen (\cref{conj:no fibers in extrdisc}), which we prove.
\begin{mainthm}[\cref{thm:no fibers}]
	The category $\ExtrDisc$ does not admit all fibers.
\end{mainthm}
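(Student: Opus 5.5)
The plan is to reduce the existence of a fiber in $\ExtrDisc$ to a point-set statement, and then to give a concrete counterexample.

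\emph{Reduction.} Let $f \colon S \to T$ be a map in $\ExtrDisc$ and $t \in T$. Write $K = f^{-1}(t) \subseteq S$, a closed subspace and hence compact Hausdorff. Suppose a fiber $F$ of $f$ over $t$ exists in $\ExtrDisc$, with structure map $p \colon F \to S$. Since $f \circ p$ is constant at $t$, the map $p$ factors through a continuous map $q \colon F \to K$.
\begin{enumerate}
\item Testing the universal property against $E = \ast$ shows that $q$ is a bijection on points.
\item A continuous bijection between compact Hausdorff spaces is a homeomorphism, so $K \cong F$ and $K$ is extremally disconnected.
\item Conversely, if $K$ is extremally disconnected then $K$ is the fiber, because $\ExtrDisc \subseteq \CHaus$ is full and $K$ is the fiber in $\CHaus$.
\end{enumerate}
So the fiber exists if and only if $f^{-1}(t)$ is extremally disconnected. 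It therefore suffices to find $f$ and $t$ in $\ExtrDisc$ with $f^{-1}(t)$ not extremally disconnected.

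\emph{Construction.} I would take $S = \beta(\bN \times \bN)$, $T = \beta\bN$ and $f = \beta(\mathrm{pr}_1)$, with $t = \mathcal{U}$ a non-principal ultrafilter. Then $K$ is the set of ultrafilters $\mathcal{V}$ on $\bN\times\bN$ with $\mathrm{pr}_{1*}\mathcal{V} = \mathcal{U}$. To show $K$ is not extremally disconnected, I would exhibit disjoint open sets of $K$ whose closures meet. Such sets can be cut out by subsets $A \subseteq \bN \times \bN$, since the sets $\overline{A} \cap K$ form a clopen base of $K$.

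The strategy is to imitate the classical proof that $\bN^\ast = \beta\bN\setminus\bN$ is not extremally disconnected (Gillman--Jerison). That proof takes an uncountable family of almost disjoint subsets of $\bN$, in the form $\omega_1$-increasing and $\omega_1$-decreasing towers with no separating set, and takes unions of the corresponding clopens. Here I would run this construction fibrewise over $\mathcal{U}$:
\begin{itemize}
\item Work with sets $A = \bigcup_n \{n\}\times A_n$, using the second coordinate to encode the almost-disjoint family.
\item Form two open sets $O_1, O_2 \subseteq K$ as unions of clopens coming from two interleaved towers. Make them disjoint because the towers are pairwise disjoint modulo $\mathcal{U}$.
\item Show their closures meet, by arguing that no single $A$ separates them modulo $\mathcal{U}$.
\end{itemize}

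As a fallback, I might instead locate a closed copy of $\bN^\ast$, or of some other non-extremally-disconnected space, sitting as a clopen-retract-like piece of $K$. Extremal disconnectedness passes to clopen subspaces, and more generally to regular closed subspaces, so such a copy would also finish the proof.

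\emph{Main obstacle.} The reduction is formal. The genuine difficulty is the point-set step: verifying that the specific fiber $K$ is not extremally disconnected. In particular, the ultrafilter quotient over $\mathcal{U}$ changes "almost disjoint" (modulo finite sets) into "disjoint modulo $\mathcal{U}$", and the Gillman--Jerison tower argument has to be adapted to that notion. I would first try to transport that argument directly, with a Hausdorff-gap style construction of length $\omega_1$ in the reduced power $\mathcal{P}(\bN)^{\bN}/\mathcal{U}$.
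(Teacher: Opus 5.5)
Your reduction is correct. It is essentially the paper's \cref{lemma:limit in extrdisc}, and testing the universal property against the point is a slightly leaner way to get bijectivity. Your counterexample $\beta(\mathrm{pr}_1)\colon\beta(\bN\times\bN)\to\beta\bN$ over a non-principal $\mathcal{U}$ is exactly the paper's. The gap is the one non-formal step: you never show that $K$ is not extremally disconnected. You correctly identify the clopen algebra of $K$ with the ultrapower $\mathcal{P}(\bN)^{\bN}/\mathcal{U}$, but then only announce an $(\omega_1,\omega_1)$-gap construction there, and this does not transfer for free. The ultrapower separates countable families: if $a_i\wedge b_j=0$ for all $i,j\in\bN$, choosing coordinatewise along a decreasing sequence of sets in $\mathcal{U}$ produces $c$ with $a_i\leq c$ and $c\wedge b_j=0$. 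So two disjoint countable unions of clopens in $K$ always have disjoint closures. An uncountable gap would therefore have to be imported, e.g.\ by embedding $\mathcal{P}(\bN)/\mathrm{fin}$ via $A\mapsto[(A\setminus\{0,\dots,n-1\})_n]$. One would then use Hausdorff's condition (each set $\{\alpha<\beta : a_\alpha\cap b_\beta\subseteq\{0,\dots,n-1\}\}$ is finite) together with a pigeonhole argument over $\omega_1$ to see that the gap survives. None of this is in your proposal, and the fallback via a regular closed copy of $\bN^\ast$ is not substantiated either.

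The missing idea is that two towers are unnecessary. It suffices to show that the closure of \emph{one} countable union of clopens is not open, and there countability works in your favour. The paper takes $C_k=K\cap\overline{\bN\times\{k\}}$ (each is a single isolated point of $K$) and $U=\bigcup_k C_k$. It shows that every clopen $K\cap\overline{J}\supseteq U$ contains a strictly smaller clopen still containing $U$. Indeed, $U\subseteq\overline{J}$ forces every slice $J^{(k)}=\{n:(n,k)\in J\}$ to lie in $\mathcal{U}$. Since $\mathcal{U}$ is non-principal, $B_k=\{n\geq k\}\cap\bigcap_{j\leq k}J^{(j)}\in\mathcal{U}$. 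Set $E_k=B_k\setminus B_{k+1}$ and $R=\bigcup_k E_k\times\{k\}$. Then $E_k\notin\mathcal{U}$, $\bigcup_k E_k=B_0$ because $\bigcap_k B_k=\emptyset$, and $R\subseteq J$. Now $K\cap\overline{J\setminus R}$ still contains every $C_k$, as $J^{(k)}\setminus E_k\in\mathcal{U}$. But it misses $K\cap\overline{R}$, which is non-empty because $\mathrm{pr}_1(R)=B_0\in\mathcal{U}$. So $U$ has no smallest clopen superset and $\overline{U}$ is not open. A counting argument would also work. An almost disjoint family of size $\mathfrak{c}$ yields, via the embedding above, $\mathfrak{c}$ pairwise disjoint non-zero clopens of $K$. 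If $K$ were extremally disconnected, their joins would give $2^{\mathfrak{c}}$ distinct clopens, while $K$ has at most $\mathfrak{c}$ clopens.
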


\noindent Indeed, let $\beta \pi_1 \colon \beta(\bN \times \bN) \to \beta \bN$ be induced by the projection onto the first coordinate. We show that the fiber over an element $p \in \beta \bN \setminus \bN$ is not extremally disconnected.

\subsection*{Acknowledgements}
This article grew out of the second author's master's thesis, which was supervised by the first author. We thank Adrian Clough for helpful discussions, which ultimately motivated this project. Moreover, we thank Dustin Clausen, Catrin Mair and Peter Scholze for insightful conversations. We are also grateful to the Max Planck Institute for Mathematics in Bonn for its hospitality and financial support.
\addtocontents{toc}{\protect\setcounter{tocdepth}{5}}

\section{Fractured topoi and condensed anima}
\noindent In this section we primarily review the notion of fractured topoi and condensed anima. We also discuss the relation of fractured topoi to comonadic structures (\cref{lemma:monadic}) and cohesive $\infty$-topoi (\cref{rem: fractured topoi via cohesive topoi}).

\subsection{Fractured topoi} \label{subsection: fractured topoi}
Following the discussion in \cref{subsec:gros and petit}, we now review the notion of fractured $\infty$-topoi.

\begin{dfn}[{\cite[Definition 20.1.2.1]{lurie2018sag}}] \label{dfn: fractured topos}
    Let $\sX$ be an $\infty$-topos. A subcategory $j_! \colon \sX^{\corp} \to \sX$ is a \emph{fracture subcategory} if it satisfies the following conditions:
    \begin{enumerate}[label=(\roman*), leftmargin=*]
        \item If $X \in \sX^{\corp}$ and $f \colon X \to Y$ in $\sX$ is an equivalence, then $f$ belongs to $\sX^{\corp}$.
        \item The $\infty$-category $\sX^{\corp}$ admits pullbacks and these are preserved by $j_!$.
        \item The inclusion functor $j_! \colon \sX^{\corp} \to \sX$ admits a right adjoint $j^* \colon \sX \to \sX^{\corp}$ which is conservative and preserves small colimits.
        \item For every map $U \to V$ in $\sX^{\corp}$ the diagram
        \[
            \begin{tikzcd}
                j_! j^* j_! U \arrow[r] \arrow[d] & j_! j^* j_! V \arrow[d]
                \\ j_! U \arrow[r] & j_! V
            \end{tikzcd}
        \]
        induced by the counit $j_! j^* \Rightarrow \id_{\sX}$ is a pullback in $\sX$.
    \end{enumerate}
    A \emph{fractured $\infty$-topos} is a pair $\sX^{\corp} \to \sX$ where $\sX$ is an $\infty$-topos and $\sX^{\corp}$ is a fracture subcategory of $\sX$. Objects in $\sX^{\corp}$ are called \emph{corporeal}.
\end{dfn}

\noindent An equivalent definition can be found in \cite[Proposition 1.14]{clough2024diff1}. 

\begin{rem} \label{rem:right adjoint}
	Since $j^*$ preserves small colimits and $j_! \colon \sX^{\corp} \to \sX$ is a map between presentable $\infty$-categories, it admits a right adjoint $j_*$ by the adjoint functor theorem.
\end{rem}

\begin{rem} \label{rem:ff}
    For every object $X \in \sX^{\corp}$ the functor $(j_!)_{/X} \colon \sX^{\corp}_{/X} \to \sX_{/X}$ is fully faithful by \cite[Proposition 20.1.3.1]{lurie2018sag}. Hence, fractured $\infty$-topoi precisely realize the intuition regarding petit and gros topoi discussed in \cref{subsec:gros and petit}, where $\sX^{\corp}_{/X}$ plays the role of the petit topos and $\sX_{/X}$ the role of the gros topos.
\end{rem}

\noindent Fractured $\infty$-topoi have many interesting properties. See \cite[Section 20.1.3]{lurie2018sag} for more details. We do establish the following (co)monadicity statement.
\begin{lemma} \label{lemma:monadic}
    Let $j_! \colon \sX^{\corp} \to \sX$ be a fractured $\infty$-topos. The adjunction $(j_!,j^*)$ is monadic and the adjunction $(j^*,j_*)$ is comonadic. Hence, $j^*$ creates all limits and colimits.
\end{lemma}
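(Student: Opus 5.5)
We apply the Barr--Beck--Lurie monadicity theorem \cite[Theorem 4.7.3.5]{lurie2017ha} to each of the two adjunctions. Hypotheses come from condition (iii) of \cref{dfn: fractured topos}, which says $j^*$ is conservative and preserves small colimits, and from \cref{rem:right adjoint}, which supplies the right adjoint $j_*$.

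\textbf{Monadicity of $(j_!,j^*)$.} The functor $j^*$ is a right adjoint (to $j_!$) and is conservative by (iii). It remains to see that $\sX$ admits colimits of $j^*$-split simplicial objects and that $j^*$ preserves them. This is immediate: $\sX$ is an $\infty$-topos, so it has all small colimits, in particular all geometric realizations, and $j^*$ preserves all small colimits by (iii). Hence the adjunction is monadic.

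\textbf{Comonadicity of $(j^*,j_*)$.} We use the dual Barr--Beck--Lurie theorem. Here $j^*$ is a left adjoint (to $j_*$) and is conservative. We need $\sX$ to admit limits of $j^*$-split cosimplicial objects and $j^*$ to preserve them. Since $j^*$ is a right adjoint, it preserves all small limits, and $\sX$, being presentable, admits all small limits. Hence the adjunction is comonadic.

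\textbf{Creation of limits and colimits.} Monadicity identifies $\sX$ with the category of algebras $\mathrm{LMod}_T(\sX^{\corp})$ over the monad $T=j^*j_!$ on $\sX^{\corp}$. The forgetful functor from algebras creates all limits that exist in the base, and also every colimit that $T$ preserves. Dually, comonadicity identifies $\sX$ with coalgebras over the comonad $G=j^*j_*$. The forgetful functor creates all colimits, and also every limit that $G$ preserves.

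A more direct route avoids tracking which colimits $T$ preserves. Let $D$ be a small diagram in $\sX$. Then $\sX$ has a colimit and a limit of $D$, and $j^*$ preserves both, since it is both a left adjoint and a right adjoint. Because $j^*$ is conservative, any cocone (respectively cone) on $D$ whose image under $j^*$ is a colimit (respectively limit) is compared to the actual colimit (respectively limit) of $D$ by a map that becomes an equivalence after applying $j^*$. Conservativity then makes this comparison map an equivalence. Therefore $j^*$ both reflects and preserves all small limits and colimits. The remaining clause of creation, lifting an existing (co)limit from $\sX^{\corp}$, follows because $\sX^{\corp}$, as a presentable category, has them and $j^*$ preserves the ones formed in $\sX$.

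The only subtle point is the notion of "creates" being used. The argument above relies on every small limit and colimit already existing in $\sX$, which holds here. Presentability of $\sX^{\corp}$ is implicit in \cref{rem:right adjoint}, so all (co)limits needed on both sides exist.
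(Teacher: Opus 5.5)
Your proposal is correct and follows the paper's proof: you apply Barr--Beck--Lurie to $(j_!,j^*)$ using conservativity and colimit preservation from condition (iii), and its dual to $(j^*,j_*)$ using that $j^*$ preserves limits as a right adjoint, after which creation of (co)limits comes from the (co)monadic forgetful functors, as in the paper's citation of Riehl--Verity. Your extra direct argument for the last clause (conservativity plus preservation of all small limits and colimits, with $\sX$ complete and cocomplete) is also valid, and it shows that this clause does not actually need (co)monadicity.
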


\begin{proof}
    As explained in \cite[Remark 20.1.3.8]{lurie2018sag}, $j^*$ is conservative and preserves all limits as a right adjoint, and so monadicity follows from the Barr--Beck theorem \cite[Theorem 4.7.3.5]{lurie2017ha}. Similarly, Barr--Beck applied to $(j^*,j_*)$ implies comonadicity. The last statement is a standard consequence of (co)monadic adjunctions \cite[Theorem 5.7]{riehlverity2015completeness}.
\end{proof}

\noindent Beyond \cref{lemma:monadic} we make one additional observation. Recall from the introduction (\cref{subsec:gros and petit}) that there is another effort to axiomatize the relation between petit and gros topoi, known as \emph{cohesive topos theory}, first pioneered by Lawvere \cite{lawvere1994cohesivetopos, lawvere2007cohesion} and later developed by Schreiber \cite{schreiber2013cohesive, sati2020proper}. 

\medskip \noindent It is well-known that condensed anima are not compatible with the cohesive approach \cite[Example 2.2.14]{barwickhaine2019pyknotic}, and hence we will not pursue this approach further. However, we observe the following non-trivial interaction between cohesive and fractured $\infty$-topoi.

\begin{rem} \label{rem: fractured topoi via cohesive topoi}
    Let $j_! \colon \sX^{\corp} \to \sX$ be a fractured $\infty$-topos and let $X \in \sX^{\corp}$. So we have a triple adjunction $j_! \dashv j^* \dashv j_*$ by \cref{rem:right adjoint}. Passing to slice categories yields a triple adjunction
    \[
      \begin{tikzcd}
        \sX^{\corp}_{/X} \arrow[rr, "(j_!)_{/X}", shift left = 4, "" name = 1] \arrow[rr, "(j_*)_{/X}", swap, shift right = 4, ""name = 3] & & \sX_{/X} \arrow[ll, "(j^*)_{/X}" description, "" name = 2]
        \arrow[phantom,from=1,to=2,"\scriptscriptstyle\bot", yshift = 2]
        \arrow[phantom,from=2,to=3,"\scriptscriptstyle\bot", yshift = 1.5]
      \end{tikzcd}
    \]
        In this setting, $(j_!)_{/X}$ is fully faithful (\cref{rem:ff}) and preserves pullbacks. Moreover, $\id_X \in \sX_{/X}^{\corp}$ by \cref{dfn: fractured topos}(i), so it is a terminal object and hence $(j_!)_{/X}$ preserves terminal objects. Thus $(j_!)_{/X}$ preserves finite limits.
        
        \medskip \noindent These are almost enough conditions to provide a further left adjoint of $(j_!)_{/X}$ which would result in a quadruple adjunction. If this left adjoint would then also preserve finite products, then we would recover a cohesive structure in the sense of Lawvere--Schreiber \cite[Definition 3.4.1, Remark 3.4.2]{schreiber2013cohesive}.
\end{rem}

\noindent A fractured $\infty$-topos is in general difficult to produce by hand. Machinery developed in \cite{lurie2018sag} allows us to build fractured $\infty$-topoi from site-theoretic notions.

\begin{dfn}[{\cite[Definition 20.2.1.1, 20.6.2.1]{lurie2018sag}}] \label{definition: admissibility}
    Let $(\sC, \tau)$ be an $\infty$-site.
    \begin{enumerate}[label=(\roman*), leftmargin=*]
        \item An \emph{admissibility structure} on $\sC$ is a collection of maps in $\sC$, called the \emph{admissibility morphisms}, satisfying the following conditions:
        \begin{enumerate}[label=(\alph*), leftmargin=*]
            \item Equivalences in $\sC$ are admissible.
            \item If $f \colon U \to X$ is admissible in $\sC$ and $g \colon X' \to X$ is a map in $\sC$, then the pullback square
            \[
                \begin{tikzcd}
                    U \times_X X' \arrow[r] \arrow[d, "f'", swap] \arrow[dr, phantom, very near start, "\lrcorner"]  & U \arrow[d, "f"] \\ 
                    X' \arrow[r, "g", swap] & X
                \end{tikzcd}
            \]
            exists and $f'$ is admissible.
            \item Let 
            \[
                \begin{tikzcd}
                    X \arrow[rr, "f"] \arrow[dr, "h", swap] & & Y \arrow[dl, "g"] \\ 
                    & Z
                \end{tikzcd}
            \]
            be a commuting triangle in $\sC$ with admissible $g$. Then, $f$ is admissible if and only if $h$ is admissible.
            \item The collection of admissible morphisms is closed under retracts in $\Fun([1], \sC)$.
        \end{enumerate}
        We write $\sC^{\ad}$ for the wide subcategory of $\sC$ spanned by the admissible morphisms.
        \item An admissibility structure $\sC^{\ad} \to \sC$ is \emph{compatible with $\tau$} if for every $X \in \sC$ and covering sieve $\sC_{/X}^{(0)} \to \sC_{/X}$ there exists a $\tau$-covering\footnote{By this we mean a collection of maps that generate a covering sieve, see \cite[Section 20.6.1]{lurie2018sag}.} $\{f_{\alpha} \colon U_{\alpha} \to X \}_{\alpha}$ such that $f_{\alpha}$ is admissible and belongs to $\sC_{/X}^{(0)}$.
        \item A \emph{geometric site} is a triple $(\sC, \sC^{\ad}, \tau)$ where $(\sC, \tau)$ is an $\infty$-site and $\sC^{\ad}$ is an admissibility structure on $\sC$ compatible with $\tau$.
    \end{enumerate}
\end{dfn}

\noindent Let $j \colon \sC^{\ad} \to \sC$ be part of a geometric site $(\sC, \sC^{\ad}, \tau)$. There is a Grothendieck topology $\tau^{\ad} \coloneqq j^* \tau$ on $\sC^{\ad}$ characterized by: For any $C \in \sC^{\ad}$ a sieve $(\sC^{\ad}_{/C})^{(0)} \subseteq \sC^{\ad}_{/C}$ is a covering sieve if and only if the collection $\{F(U) \to F(C) \}_{U \in (\sC^{\ad}_{/C})^{(0)}}$ is a $\tau$-covering of $F(C)$, see \cite[Proposition 20.6.1.1]{lurie2018sag}.

\begin{thm}[{\cite[Theorem 20.6.3.4]{lurie2018sag}}]
    Let \label{theorem: geometric site} $(\sC, \sC^{\ad}, \tau)$ be a geometric site. Precomposition with $j$ induces an adjunction
    \[
       \begin{tikzcd}
         \Shv_{\tau^{\ad}}(\sC^{\ad}) \arrow[rr, "j_!", shift left = 1.5, ""name = 1] & & \Shv_{\tau}(\sC) \arrow[ll, "j^*", shift left = 1.5, ""name = 2]
         \arrow[phantom,from=1,to=2,"\scriptscriptstyle\bot"]
       \end{tikzcd}
     \]
        and $j_!$ induces an equivalence from $\Shv_{\tau^{\ad}}(\sC^{\ad})$ onto a fracture subcategory $\Shv_{\tau}(\sC)^{\corp} \subset \Shv_{\tau}(\sC)$.
\end{thm}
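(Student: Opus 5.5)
The plan is to verify the four axioms of \cref{dfn: fractured topos} directly for the essential image of $j_!$, where $j_!$ is the left Kan extension along $j \colon \sC^{\ad} \to \sC$ followed by $\tau$-sheafification. The first step is the adjunction itself. Restriction along $j$ sends $\tau$-sheaves to $\tau^{\ad}$-sheaves: a $\tau^{\ad}$-covering sieve on $C$ is generated by admissible maps forming a $\tau$-covering, and the sheaf condition for such a family follows from compatibility together with the fact that pullbacks of admissible maps exist and remain admissible (axiom (b)). So $j^*$ is defined. It preserves small colimits, because $\tau$-local equivalences restrict to $\tau^{\ad}$-local equivalences; one can check this on covers, using compatibility (ii) to refine any $\tau$-covering sieve to one generated by admissible maps. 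Its left adjoint is $j_! = L_\tau \circ \mathrm{Lan}_j$.

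The second step is to prove that $j_!$ is faithful enough to be an equivalence onto a (non-full) subcategory, and to establish axiom (iv). The key computation is on representables. For $U \in \sC^{\ad}$, the composite $j^* j_! (h^{\ad}_U)$ evaluated on $C$ is $L_{\tau^{\ad}}$ of $\Map_{\sC}(C,U)$, while $h^{\ad}_U(C)=\Map_{\sC^{\ad}}(C,U)$. By axiom (c), an admissible map $U \to V$ then identifies admissible maps into $U$ with admissible maps into $V$ that factor through $U$. This yields the cartesian square of (iv) on representables:
\[ \Map^{\ad}(C,U) \simeq \Map^{\ad}(C,V)\times_{\Map(C,V)}\Map(C,U). \]
Both sides of (iv) commute with colimits in $U$ over $V$, since colimits are universal in a topos. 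We can therefore extend from representables to all objects of $\Shv_{\tau^{\ad}}(\sC^{\ad})$ by writing a sheaf as a colimit of representables.

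The same identification gives faithfulness and the subcategory statement. A map in the image is one whose pullback along points $h_C \to j_!V$ with $C$ admissible-over is corporeal, and the mapping spaces $\Map(j_!U,j_!V)$ contain $\Map(U,V)$ as a union of components. The last fact needs axiom (d), closure under retracts, to ensure the comparison is an inclusion of components.

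The third step collects the remaining axioms. Axiom (i) is immediate from (a). For axiom (ii), pullbacks in $\sX^{\corp}$ exist by presentability and are preserved by $j_!$: on representables this follows from (b) and (c), since the pullback of admissible maps is computed in $\sC$, and it extends by colimits. For axiom (iii), conservativity of $j^*$ holds because every object of $\sC$ is covered by the maps $h_C$, and $j^*$ detects equivalences after sheafification.

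I expect the main obstacle to be the passage from representables to general sheaves in (iv) and in the subcategory claim. Sheafification intervenes here, and one must show that $L_\tau \mathrm{Lan}_j$ interacts correctly with the pullback square. My first attempt would be to prove that the natural map $\mathrm{Lan}_j \to j_!$ is compatible with the relevant slices, reducing the question to presheaves, where the statement holds objectwise as above.
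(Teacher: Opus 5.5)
The paper does not prove this statement; it quotes it from Lurie (SAG, Theorem 20.6.3.4), so your proposal has to stand on its own. Step 1 is essentially right: compatibility is exactly what makes $j^*$ carry $\tau$-local equivalences to $\tau^{\ad}$-local ones. The genuine gap is in Step 2, which does not prove axiom (iv) of \cref{dfn: fractured topos}.

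Your displayed identity says $h^{\ad}_U\simeq h^{\ad}_V\times_{j^*h_V}j^*h_U$ in presheaves on $\sC^{\ad}$. That is, the naturality square of the \emph{unit} $\id\Rightarrow j^*j_!$ is cartesian. Combined with descent, this shows that $V\to j^*j_!V$ is a monomorphism, which gives faithfulness without using (d), and it gives the slice full faithfulness of \cref{rem:ff}. Axiom (iv), however, is about the \emph{counit}: it asks for $j_!j^*j_!U\simeq j_!U\times_{j_!V}j_!j^*j_!V$ in $\Shv_\tau(\sC)$. On representables this involves $j_!j^*h_U$, the left Kan extension of the non-representable presheaf $C\mapsto\Map_\sC(C,U)$ on $\sC^{\ad}$, and your computation never touches it.

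Your identity uses only (c), and (c) cannot give (iv). Take $\sC$ to be the commutative square $P\to U\to V$, $P\to D\to V$, viewed as a poset (so $P=U\times_V D$). Give it the topology in which only maximal sieves cover, and declare only identities and $g\colon U\to V$ admissible. Then (a), (c), (d) and your identity all hold. Yet $j_!j^*h_U\simeq h_U\sqcup h_P$, while $h_U\times_{h_V}j_!j^*h_V\simeq h_U\sqcup h_P\sqcup h_P$. What fails is (b), since $P\to D$ is not admissible.

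The missing step is a computation of the counit that uses (b) for pullbacks along \emph{arbitrary} maps. In presheaves, $j^*h_U\simeq\colim_{\mathcal{E}_U}h^{\ad}_D$. Here $\mathcal{E}_U$ has as objects the maps $D\to U$ in $\sC$ and as morphisms the admissible maps over $U$. Hence $j_!j^*h_U\simeq\colim_{\mathcal{E}_U}h_D$.

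For admissible $U\to V$, the composition functor $\mathcal{E}_U\to\mathcal{E}_V$ has a right adjoint $(D\to V)\mapsto(D\times_V U\to U)$. The pullback exists by (b). The adjunction is precisely your identity applied to the admissible projection $D\times_V U\to D$. Right adjoints are cofinal, so
\[ j_!j^*h_U\simeq\colim_{\mathcal{E}_V}(h_D\times_{h_V}h_U)\simeq j_!j^*h_V\times_{h_V}h_U \]
by universality of colimits.

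Contrary to your expectation, sheafification then causes no trouble. By your Step 1, $j^*$ commutes with sheafification, and $L_\tau\circ\mathrm{Lan}_j$ inverts $\tau^{\ad}$-local equivalences. So the sheaf-level square is $L_\tau$ of the presheaf square, and $L_\tau$ is left exact.

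When you extend to arbitrary $U\to V$, universality of colimits only lets you vary $U$ over a fixed base. To replace a representable base by a general $V$ you need descent (effectivity of colimits), applied to the presentation of $V$ by representables along admissible transition maps, followed by pasting.

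Finally, axiom (i) does not follow from (a) alone. You must show that every equivalence $j_!U\simeq j_!V$ in $\Shv_\tau(\sC)$ is $j_!$ of a map $U\to V$. This is also exactly what is needed for the image of $j_!$ to be a subcategory onto which $j_!$ is an equivalence.
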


\noindent One application of fractured $\infty$-topoi lies in the study of points and hence hypercompleteness of $\infty$-topoi. Recall that a \emph{point} of an $\infty$-topos $\sX$ is a geometric morphism $x_* \colon \An \to \sX$. Then, $\sX$ is said to \emph{have enough points} if there exists a family of points $x_* \colon \An \to \sX$, such that the collection of associated left adjoints $\{x^* \colon \sX \to \An : x \text{ point} \}$ is jointly conservative.
\begin{prop}[{\cite[Proposition 1.5]{clough2024diff1}}] \label{prop: enough points}
    Let $j_! \colon \sX^{\corp} \to \sX$ be a fractured $\infty$-topos with right adjoint $j^*$. Let $\{x_i \colon \An \to \sX^{\corp} \}_i$ be a collection of jointly conservative points in $\sX^{\corp}$. Then, $\{ j_* \circ x_i \colon \An \to \sX \}_i$ is a collection of jointly conservative points in $\sX$, where $j_*$ is the right adjoint to $j^*$.
\end{prop}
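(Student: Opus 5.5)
We first check that each $j_* \circ x_i$ is a point of $\sX$, and then that the corresponding inverse images are jointly conservative.

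\textbf{Step 1: $j_* \circ x_i$ is a geometric morphism.} The functor $j_* \circ x_{i,*}$ is a right adjoint, and its left adjoint is $x_i^* \circ j^*$. By \cref{dfn: fractured topos}(iii), $j^*$ preserves small colimits. It also preserves finite limits, since it is itself a right adjoint (of $j_!$) and hence preserves all limits. Since $x_i^*$ is left exact and colimit-preserving, the composite $x_i^* j^*$ is left exact and preserves colimits. Moreover, $j_*$ exists by \cref{rem:right adjoint}. So $(x_i^* j^*, j_* x_{i,*})$ is an adjunction whose left adjoint is left exact, which means $j_* \circ x_i$ is a geometric morphism $\An \to \sX$, i.e. a point of $\sX$.

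\textbf{Step 2: joint conservativity.} Let $f \colon A \to B$ be a morphism in $\sX$ such that $x_i^* j^* f$ is an equivalence for every $i$. Since the family $\{x_i^*\}_i$ is jointly conservative on $\sX^{\corp}$, $j^* f$ is an equivalence in $\sX^{\corp}$. Since $j^*$ is conservative by \cref{dfn: fractured topos}(iii), $f$ is an equivalence in $\sX$. Hence the family $\{x_i^* j^*\}_i$ is jointly conservative, which proves the claim.

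The main point needing care is Step 1, specifically that $j^*$ is left exact. This follows because $j^*$ has a left adjoint $j_!$ and so preserves all limits, even though $j_!$ is not a geometric morphism in the usual sense. Once this is settled, Step 2 is immediate from the conservativity axiom.
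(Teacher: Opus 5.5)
Your proposal is correct and is the same argument as the paper's, whose entire proof is that the claim ``follows immediately from the conservativity of $j^*$.'' Your Step 1 spells out what the paper leaves implicit: $x_i^* \circ j^*$ is left exact and colimit-preserving, so each $j_* \circ x_i$ really is a point of $\sX$.
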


\begin{proof}
    This follows immediately from the conservativity of $j^*$.
\end{proof}

\begin{rem}
 In particular, $\sX$ has enough points if $\sX^{\corp}$ has enough points.
\end{rem}

\begin{cor} \label{cor:hypercomplete}
    Let $j_! \colon \sX^{\corp} \to \sX$ be a fractured $\infty$-topos such that $\sX^{\corp}$ has enough points. Then, $\sX$ is hypercomplete.
\end{cor}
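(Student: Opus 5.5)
The plan is to combine \cref{prop: enough points} with the standard fact that an $\infty$-topos with enough points is hypercomplete. By \cref{prop: enough points}, a jointly conservative family of points $\{x_i\}_i$ of $\sX^{\corp}$ gives a jointly conservative family of points $\{j_* \circ x_i\}_i$ of $\sX$. Their inverse image functors are $x_i^* \circ j^*$, since $j^*$ is left adjoint to $j_*$ (\cref{rem:right adjoint}). So it suffices to show that an $\infty$-topos $\sX$ admitting a jointly conservative family of points $\{y_i^* \colon \sX \to \An\}_i$ is hypercomplete.

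For that, let $f \colon A \to B$ be an $\infty$-connected morphism in $\sX$; we must show $f$ is an equivalence. Each $y_i^*$ is the left adjoint of a geometric morphism, hence left exact and colimit preserving. It therefore preserves $n$-connective morphisms for every $n$, since $n$-connectivity is defined via iterated diagonals being effective epimorphisms, and both are preserved by left exact left adjoints (\cite[Proposition 6.5.1.16]{lurie2009htt}). So $y_i^* f$ is an $\infty$-connected map in $\An$. Since $\An$ is hypercomplete, this is a map of spaces inducing isomorphisms on all homotopy sets at all basepoints, hence an equivalence by Whitehead's theorem. Joint conservativity of $\{y_i^*\}_i$ then shows $f$ is an equivalence, so $\sX$ is hypercomplete.

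No step is a real obstacle. The only point needing care is that $\infty$-connectedness is preserved by inverse image functors of geometric morphisms, which is the cited result. Alternatively, one can note that $\An$ is hypercomplete, so each point factors through the hypercompletion $\sX^{\wedge}$. Joint conservativity then forces the localization $\sX \to \sX^{\wedge}$ to be conservative, and a conservative left exact localization is an equivalence.
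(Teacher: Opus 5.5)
Your proposal is correct and follows the paper's approach: combine \cref{prop: enough points} with the fact that an $\infty$-topos with enough points is hypercomplete. The only difference is that the paper cites this last fact (\cite[Remark 6.5.4.7]{lurie2009htt}), whereas you prove it yourself via preservation of $\infty$-connective morphisms by inverse image functors, and either of your two arguments for it is sound.
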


\begin{proof}
    This follows immediately from \cref{prop: enough points} since $\infty$-topoi with enough points are hypercomplete \cite[Remark 6.5.4.7]{lurie2009htt}.
\end{proof}

\subsection{Condensed mathematics}\label{subsec:condensed anima}
Continuing the explanation in \cref{subsec:condensed intro}, condensed anima should be viewed as nice sheaves on topological spaces. We only briefly recall the setup to familiarize the reader with the notation. We refer the reader to \cite{scholze2019condensed, heyermann2024sixfunctorformalismssmoothrepresentations, mair2025thesis} for more detailed treatments.

\medskip \noindent Consider the chain of full subcategories
\[
    \begin{tikzcd}
        \ExtrDisc \arrow[r, hookrightarrow] & \ProFin \arrow[r, hookrightarrow] & \CHaus.
    \end{tikzcd}
\]
of the (1-)category of compact Hausdorff spaces $\CHaus$. We recall that the (1-)category of extremally disconnected spaces $\ExtrDisc$ consists of those compact Hausdorff spaces such that the closure of an open set is open again. Now, we can endow each of these with the Grothendieck topology of finitely jointly surjective families. Taking only $\kappa$-small spaces for an uncountable strong limit cardinal $\kappa$, we obtain sites and these inclusions induce equivalences on hypersheaves by an application of a base comparison result \cite[Proposition 3.12.11]{barwickglasmanhaine2018exodromy}.

\begin{dfn}[{\cite[Definition 11.7]{clausenscholze2020analytic}}]
    The $\infty$-topos of \emph{condensed anima} is defined as
    \[ \Cond(\An) \coloneqq \Shv_{\An}(\ExtrDisc_{\kappa}). \]
\end{dfn}

\noindent In the following, we will ignore the $\kappa$, as it will not be relevant for our considerations, see also \cite[Remark 3.5.6]{heyermann2024sixfunctorformalismssmoothrepresentations}.

\begin{rem} \label{remark: ExtrDisc categorically bad}
    We warn that the category $\ExtrDisc$ behaves extremely\footnote{Pun intended.} badly. This has caused us many headaches throughout the project. For example, not even binary products of infinite extremally disconnected spaces exist in $\ExtrDisc$, see \cite[Proposition 2.54]{land2022condensed}. We will later show that it does not admit all fibers, see \cref{thm:no fibers}. Let us moreover remark that every convergent sequence in extremally disconnected spaces is eventually constant \cite[Theorem 1.3]{Gleason1958ProjectiveTS}. These objects are intimately connected to set theory since every extremally disconnected space is a retract of the Stone--\v{C}ech compactification of a discrete space \cite[Corollary]{rainwater1959projective}. The latter often features in set-theoretic contexts.
\end{rem}

\begin{rem}\label{rem:extrdisc projective}
  One primary reason to work with extremally disconnected spaces is that they are precisely the projective objects in $\CHaus$ by Gleason \cite[Theorem 2.5]{Gleason1958ProjectiveTS}, meaning every continuous surjection into an extremally disconnected space admits a section.
\end{rem}

\begin{lemma}[{\cite[Section 2.2.9]{barwickhaine2019pyknotic}}] \label{lemma: condensed anima is finite product preserving functor}
    A functor $\ExtrDisc^{\op} \to \An$ is a condensed anima if and only if it preserves finite products.
\end{lemma}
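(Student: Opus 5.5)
The plan is to prove both directions by reducing the sheaf condition to the simplest kind of cover. The key input is \cref{rem:extrdisc projective}: extremally disconnected spaces are projective in $\CHaus$.

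\emph{Disjoint unions.} First note that $\ExtrDisc$ is closed under finite disjoint unions. The empty space is initial, and a finite coproduct $S_1 \sqcup \dots \sqcup S_n$ in $\CHaus$ is again extremally disconnected. This holds because an open set in the union is a disjoint union of open sets in the pieces, and closure is computed piecewise. So "preserving finite products" means that $F(\emptyset) \simeq \ast$ and that $F(S \sqcup T) \to F(S) \times F(T)$ is an equivalence.

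\emph{Sheaves preserve finite products.} Let $F$ be a sheaf. The empty family covers $\emptyset$, so $F(\emptyset)$ is the empty limit, i.e.\ $\ast$. The family $\{S \to S \sqcup T,\, T \to S \sqcup T\}$ is finitely jointly surjective. Its pairwise fiber products are $S$, $T$ and $S \times_{S\sqcup T} T = \emptyset$; the higher ones are similar, and all of these exist in $\ExtrDisc$. Since the inclusions are monomorphisms, the Čech nerve is degenerate. The descent limit therefore reduces to $F(S) \times F(T)$, using $F(\emptyset) = \ast$. To avoid worrying about fiber products in $\ExtrDisc$, I will phrase descent via the sieve generated by the family. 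The sieve generated by the two summand inclusions consists exactly of maps $R \to S \sqcup T$. Each such $R$ splits as $R_S \sqcup R_T$ by pulling back the clopen decomposition, and clopens of extremally disconnected spaces are extremally disconnected. From this, cofinality gives that the limit over the sieve is $F(S) \times F(T)$.

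\emph{Product-preserving functors are sheaves.} Conversely, let $F$ preserve finite products and let $\{S_i \to S\}_{i=1}^n$ be a finitely jointly surjective family. Set $T = \bigsqcup_i S_i \in \ExtrDisc$. The sieve generated by the family equals the sieve generated by the single surjection $p\colon T \to S$, using product preservation to identify $F(T) = \prod_i F(S_i)$. By \cref{rem:extrdisc projective}, $p$ has a section $s$. A covering sieve containing a split epimorphism contains $\mathrm{id}_S$: any map $R \to S$ factors as $R \to T \to S$ via $s$. Hence this sieve is the maximal sieve. The limit of $F$ over the maximal sieve is $F(S)$, because $\mathrm{id}_S$ is terminal in it. So the descent condition holds automatically.

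The step needing the most care is making the descent condition precise without assuming fiber products in $\ExtrDisc$, given the warnings in \cref{remark: ExtrDisc categorically bad}. Working with sieves, as above, sidesteps this. It leaves only two checks: that a finite jointly surjective family and the associated single map $T \to S$ generate the same sieve up to the product identification, and the cofinality argument in the first direction.
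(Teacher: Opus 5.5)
\textbf{Verdict: genuine gap in the converse direction.}

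\emph{Forward direction.} This part is fine in substance. One correction: the sieve generated by the two summand inclusions is not all maps $R \to S \sqcup T$, but only those factoring through $S$ or through $T$. The objects $\id_S$, $\id_T$ and $\emptyset$ are final in it, which gives $F(S)\times_{F(\emptyset)}F(T)\simeq F(S)\times F(T)$. The splitting of $R$ is not needed there.

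\emph{Converse direction.} Here the same confusion between a generated sieve and the maximal sieve becomes a genuine gap.
\begin{itemize}
\item The sieve generated by $\{S_i \to S\}$ consists of the maps factoring through a \emph{single} $S_i$. The sieve generated by $p\colon T=\bigsqcup_i S_i\to S$ consists of the maps factoring through $T$. These are different sieves, and only the second is shown to be maximal.
\item Example: for the summand inclusions $\{S_1\hookrightarrow S_1\sqcup S_2,\ S_2\hookrightarrow S_1\sqcup S_2\}$ the first sieve is not maximal. Descent for it is exactly the condition $F(S_1\sqcup S_2)\simeq F(S_1)\times F(S_2)$. Similarly, the empty family on $\emptyset$ generates the empty sieve, while $p=\id_\emptyset$ generates the maximal one.
\item So the split-epimorphism argument proves nothing about the sieve you actually need. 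The phrase ``up to the product identification'' hides the entire content of the implication. Taken literally, your argument would show every presheaf is a sheaf, contradicting your own first direction (a sheaf must have $F(\emptyset)\simeq\ast$).
\item A covering sieve $\Sigma$ need only \emph{contain} a finite jointly surjective family. It need not be generated by one, and need not contain $p$: a sieve containing $S_1\to S$ and $S_2\to S$ need not contain $S_1\sqcup S_2\to S$. So arbitrary covering sieves must be handled as well.
\end{itemize}

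\emph{How to repair it.} Use projectivity to refine the cover to a clopen partition, rather than to make the sieve maximal.
\begin{itemize}
\item Let $s$ be a section of $p$ and set $V_i=s^{-1}(S_i)$. These form a finite clopen partition of $S$, and each $V_i\hookrightarrow S$ factors through $S_i\to S$, so it lies in $\Sigma$.
\item Let $\Sigma'\subseteq\Sigma$ be the full subcategory of maps landing in a single $V_i$.
\item For $g\colon R\to S$ in $\Sigma$, the category $\Sigma'_{/R}$ is the sieve on $R$ generated by the clopen partition $R=\bigsqcup_i g^{-1}(V_i)$, whose pieces lie in $\ExtrDisc$. The computation from your first direction plus product preservation gives $\lim_{(\Sigma'_{/R})^{\op}}F\simeq\prod_i F(g^{-1}(V_i))\simeq F(R)$.
\item Hence $F|_{\Sigma^{\op}}$ is right Kan extended from $(\Sigma')^{\op}$. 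Since $\Sigma'$ is the union of the slices over the $V_i$, glued along $\emptyset$, we get $\lim_{\Sigma^{\op}}F\simeq\lim_{(\Sigma')^{\op}}F\simeq\prod_i F(V_i)\simeq F(S)$.
\end{itemize}
This is where product preservation is genuinely used. The paper itself gives no argument and only cites Barwick--Haine, so there is no in-paper proof to compare your route against.
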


\begin{ex} \label{ex:subcanonical}
    Let $X$ be a topological space. Then, the restricted representable presheaf
    \[ \yo X \colon \ExtrDisc^{\op} \to \An, \ Y \mapsto \Map_{\Top}(Y, X) \]
    is a condensed anima, as $\Map_{\Top}(-,X)$ preserves finite products. In particular, the Grothendieck topology on 
    $\ExtrDisc$ is subcanonical.
\end{ex}

\section{A fractured structure on condensed anima} \label{sec:fractured}
\noindent In this section we prove the first main result, i.e.,~we provide a fracture subcategory of $\Cond(\An)$. We then look at some formal and computational implications thereof.

\subsection{Constructing the fractured structure} \label{subsec:fracture}
We will do this via Lurie's geometric site machinery (\cref{theorem: geometric site}), so we begin by proposing such a site.

\begin{dfn}
 Let $\ExtrDisc^{\open}$ be the site consisting of the wide subcategory of $\ExtrDisc$ with maps given by open embeddings and coverings given by finitely jointly surjective families of open embeddings defining a Grothendieck topology $\tau_{\open}$. We denote by $\Cond^{\open}(\An) \coloneqq \Shv(\ExtrDisc^{\open})$ the resulting $\infty$-category of sheaves.
\end{dfn}

\begin{rem} \label{rem:extrdisc not subcanonical}
   Let us note here that, unlike $\ExtrDisc$, the topology on $\ExtrDisc^{\open}$ is not subcanonical. Indeed, this can already be seen in the simple case of the discrete topological space $\{a,b\}$. It admits a cover by $\{a\}$ and $\{b\}$, but the preferred map 
   \[\Map_{\ExtrDisc^{\open}}(\{a,b\},\{*\}) \longrightarrow \Map_{\ExtrDisc^{\open}}(\{a\},\{*\}) \times \Map_{\ExtrDisc^{\open}}(\{b\},\{*\}) \]
   is only an injection of sets, not a bijection. We hence use the notational convention $\yo_{\open}(-)$ to denote the representable sheaves in $\Cond^{\open}(\An)$, which are by definition the sheafification of the representable presheaves.
\end{rem}

\noindent Before stating the main theorem, we make some elementary point-set topological observations.

\begin{lemma} \label{lemma:limit in extrdisc}
 Let $F\colon I \to \ExtrDisc$ be a diagram. If the limit of this diagram exists in $\ExtrDisc$, then it is given by the limit in $\CHaus$.
\end{lemma}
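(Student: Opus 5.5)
The idea is to compare the limit $L$ of $F$ in $\ExtrDisc$ with the limit $L' = \lim_I F$ in $\CHaus$, which exists since $\CHaus$ is complete. We then use the projectivity of extremally disconnected spaces (\cref{rem:extrdisc projective}) together with the fact that points of a compact Hausdorff space are detected by maps from the one-point space. Concretely, the cone $(L \to F(i))_i$ in $\ExtrDisc \subseteq \CHaus$ induces a continuous comparison map $c \colon L \to L'$. We show that $c$ is a homeomorphism, i.e.\ a continuous bijection of compact Hausdorff spaces.

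\emph{Surjectivity.} Let $x \in L'$. The singleton $\{*\}$ is extremally disconnected, so the point $x \colon \{*\} \to L'$ gives a cone from $\{*\}$ over $F$ in $\ExtrDisc$. By the universal property of $L$ in $\ExtrDisc$, this cone factors through a unique map $y \colon \{*\} \to L$. Composing with the projections of $L'$ and using the universal property of $L'$ in $\CHaus$, we get $c(y) = x$.

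\emph{Injectivity.} Let $a, b \in L$ with $c(a) = c(b)$. Then $a, b \colon \{*\} \to L$ induce the same cone over $F$, since their composites with each projection $L \to F(i)$ agree, these projections factoring through $c$. By the uniqueness part of the universal property of $L$ in $\ExtrDisc$, applied to the test object $\{*\} \in \ExtrDisc$, we conclude $a = b$.

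Thus $c$ is a continuous bijection from a compact space to a Hausdorff space, hence a homeomorphism, and it is compatible with the projections. So $L \cong L'$ as cones, i.e.\ the limit in $\ExtrDisc$ is the limit in $\CHaus$; in particular $L'$ is then extremally disconnected.

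There is no real obstacle in this argument. The one point needing care is that the test object $\{*\}$ lies in $\ExtrDisc$, so the universal property of $L$ may be applied to it. Projectivity is not actually needed; one could alternatively argue with $\beta$ of the underlying set of $L'$ mapping onto $L'$, but the point-wise argument is simpler.
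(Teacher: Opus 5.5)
Your proof is correct and is essentially the paper's argument. The paper observes that the forgetful functor $\ExtrDisc \to \Set$ has left adjoint $\beta$ and hence preserves limits, so the comparison map is a continuous bijection of compact Hausdorff spaces and therefore a homeomorphism; your test-object argument with $\{*\}$ is this same step unwound, since that forgetful functor is represented by $\beta(\{*\}) = \{*\}$ (and, as you note yourself, the projectivity announced in your first paragraph is never used).
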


\begin{proof}
 Let $L_D$ be the limit of $F$ in $\ExtrDisc$ and $L_C$ be the limit of $F$ in $\CHaus$. Then $L_D$ admits a cone to $F$ in $\CHaus$, which, by the universal property of $L_C$, corresponds to a unique continuous map $f\colon L_D \to L_C$. Now, the forgetful functor $\ExtrDisc \to \Set$ admits a left adjoint given by the Stone--\v{C}ech compactification \cite[Theorem VI.9.1]{maclane1998categories}, so the underlying set of $L_D$ is the same as the underlying set of $L_C$ and $f$ is a bijection of sets. Since $L_D, L_C$ are compact Hausdorff spaces, $f$ is a homeomorphism \cite[Theorem 26.6]{munkres2000topology}.
\end{proof}

\noindent This result has the following immediate implication.

\begin{cor} \label{cor:pullbacks in extrdisc}
  The pullback of an open embedding along an arbitrary map is an open embedding in $\ExtrDisc$, and is evaluated as the pullback in $\CHaus$.
\end{cor}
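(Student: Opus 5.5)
The plan is to compute the pullback in $\CHaus$ explicitly, show that it is again extremally disconnected, and then transfer the universal property to $\ExtrDisc$. Let $f \colon U \to X$ be an open embedding in $\ExtrDisc$ and $g \colon Y \to X$ an arbitrary continuous map of extremally disconnected spaces. The pullback in $\CHaus$ is the closed subspace
\[ U \times_X Y = \{(u,y) \in U \times Y : f(u) = g(y)\}. \]
Since $f$ is injective, projection to the second factor identifies it with $g^{-1}(f(U)) \subseteq Y$. Concretely, the map $g^{-1}(f(U)) \to U \times_X Y$, $y \mapsto (f^{-1}(g(y)), y)$, is a continuous inverse to the projection, because $f$ is a homeomorphism onto the open set $f(U)$.

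Next I would verify three properties of $V \coloneqq g^{-1}(f(U))$.

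\begin{enumerate}[label=(\roman*), leftmargin=*]
\item It is open in $Y$, since $f(U)$ is open and $g$ is continuous.
\item It is compact. Here $f(U)$ is compact as the image of a compact space, hence closed in the Hausdorff space $X$. So $f(U)$ is clopen, $V$ is clopen in $Y$, and in particular $V$ is compact Hausdorff.
\item It is extremally disconnected. A clopen subspace of an extremally disconnected space is extremally disconnected: for $W \subseteq V$ open, $W$ is open in $Y$, its closure in $Y$ is open, and since $V$ is closed this closure lies in $V$ and equals the closure in $V$.
\end{enumerate}

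It follows that $V \in \ExtrDisc$ and that the inclusion $V \hookrightarrow Y$ is an open embedding.

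Finally, I would check the universal property in $\ExtrDisc$. Since $\ExtrDisc \subseteq \CHaus$ is a full subcategory and the $\CHaus$-pullback $V$ lies in $\ExtrDisc$, the pullback in $\CHaus$ is automatically a pullback in $\ExtrDisc$. By \cref{lemma:limit in extrdisc}, any pullback in $\ExtrDisc$ agrees with this one, which gives the "evaluated as the pullback in $\CHaus$" clause.

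The only mildly subtle step is (ii), the observation that open embeddings between compact Hausdorff spaces have clopen image. Without it, $V$ would merely be open and need not be compact; with it, everything else is routine point-set topology.
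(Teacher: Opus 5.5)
Your proposal is correct and follows the paper's largely implicit argument: the $\CHaus$-pullback is the clopen subspace $g^{-1}(f(U)) \subseteq Y$, which is extremally disconnected by the observation recorded in \cref{rem:extrDisc is closed under open subspaces}, so it is also the pullback in the full subcategory $\ExtrDisc$, consistent with \cref{lemma:limit in extrdisc}. You simply supply the point-set details that the paper treats as immediate, such as the fact that an open embedding between compact Hausdorff spaces has clopen image.
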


\begin{rem} \label{rem:extrDisc is closed under open subspaces}
    The result in particular relies on the observation that clopen subspaces of extremally disconnected spaces are again extremally disconnected. While an elementary observation, it is in fact the key point-set topological fact without which \cref{cor:pullbacks in extrdisc} does not hold. See \cref{thm:no fibers} for an explicit counterexample.
\end{rem}

\begin{lemma} \label{lemma:retract}
 Open embeddings in $\ExtrDisc$ are closed under retracts.
\end{lemma}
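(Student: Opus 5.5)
We have to show: if $f\colon X\to Y$ is a retract in $\Fun([1],\ExtrDisc)$ of an open embedding $g\colon U\to V$, then $f$ is an open embedding. Concretely, the retract data consists of maps $i_0\colon X\to U$, $r_0\colon U\to X$, $i_1\colon Y\to V$ and $r_1\colon V\to Y$. These satisfy $r_0 i_0=\id_X$ and $r_1 i_1=\id_Y$, together with $g i_0 = i_1 f$ and $f r_0 = r_1 g$. The plan is to check the two halves of ``open embedding'' separately: first that $f$ is a topological embedding, then that its image is open. Since all spaces are compact Hausdorff, the first half reduces to injectivity.

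\emph{Injectivity.} Suppose $f(x)=f(x')$. Then $g i_0(x) = i_1 f(x) = i_1 f(x') = g i_0(x')$. Since $g$ is injective, $i_0(x)=i_0(x')$, and applying $r_0$ gives $x=x'$. So $f$ is a continuous injection from a compact space to a Hausdorff space, hence a closed embedding \cite[Theorem 26.6]{munkres2000topology}. In particular $f(X)$ is closed in $Y$.

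\emph{Openness of the image.} This is the main point. The claim is that $f(X) = i_1^{-1}(g(U))$; the right-hand side is open because $g(U)$ is open in $V$ and $i_1$ is continuous.
\begin{itemize}
\item[$(\subseteq)$] We have $i_1 f(x) = g i_0(x)\in g(U)$.
\item[$(\supseteq)$] Suppose $i_1(y)=g(u)$ for some $u\in U$. Then $y = r_1 i_1 (y) = r_1 g(u) = f r_0(u)\in f(X)$.
\end{itemize}
Thus $f(X)$ is open, and $f$ is a homeomorphism onto a clopen subspace, i.e.\ an open embedding.

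I do not expect a genuine obstacle here; the only care needed is to use the correct commuting squares from the retract diagram for each inclusion. Note that openness uses the section-side square $g i_0 = i_1 f$ together with the retraction identity $r_1 i_1=\id_Y$ and the square $f r_0 = r_1 g$. Also, no special property of extremally disconnected spaces beyond compact Hausdorffness is needed for this lemma.
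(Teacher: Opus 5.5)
Your proof is correct and is essentially the paper's argument. The paper shows that the left-hand square of the retract diagram is a pullback, so that the retract is a base change of an open embedding (via \cref{lemma:limit in extrdisc} and \cref{cor:pullbacks in extrdisc}). Its injectivity and surjectivity checks for that pullback are exactly your injectivity argument and your identification $f(X)=i_1^{-1}(g(U))$, so your version just unpacks the pullback statement and skips the detour through limits in $\ExtrDisc$.
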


\begin{proof}
  Assume we have a retract diagram
  \[
  \begin{tikzcd}
    A \arrow[r, "i_1"] \arrow[d, "g", swap] \arrow[rr, bend left, equal] & S \arrow[d, hookrightarrow, "f" description] \arrow[r, "p_1"] & A \arrow[d, "g"] \\ 
    B \arrow[r, "i_2", swap] \arrow[rr, bend right, equal] & T \arrow[r, "p_2", swap] & B
  \end{tikzcd},
  \]
  where $f$ is an open embedding. By \cref{cor:pullbacks in extrdisc} it suffices to prove the left hand square is a pullback square.
  
  \medskip \noindent By \cref{lemma:limit in extrdisc}, the pullback is the intersection $B \times_T S$ in $T$. Since $i_2 \circ g = f \circ i_1$ is injective, also $g$ is injective. As $i_1$ and $g$ are injective, the induced map $A \to B \times_T S$ is injective. It suffices to show it is also surjective, then it will follow from \cite[Theorem 26.6]{munkres2000topology} that the map is a homeomorphism.
  
  \medskip \noindent Let $(b,s)$ be in $B \times_T S$. We wish to show that $(b,s)$ is the image of $p_1(s)$ under the map $A \to B \times_T S$. Indeed, we have $g \circ  p_1 (s) = p_2 \circ f (s) = p_2 \circ i_2 (b) = b$ and
   \[f \circ i_1 \circ p_1(s) = i_2 \circ g \circ p_1(s) = i_2 \circ p_2 \circ f(s) = f(s) \]
   which implies $i_1 \circ p_1 (s) = s$, since $f$ is injective. This finishes the proof.
\end{proof}

\begin{thm} \label{thm:main}
    The functor
    \[ \Cond^{\open}(\An) \longrightarrow \Cond(\An) \] from \cref{theorem: geometric site} induces an equivalence onto a fractured subcategory.
\end{thm}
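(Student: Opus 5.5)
The plan is to invoke \cref{theorem: geometric site}: it suffices to show that $(\ExtrDisc, \ExtrDisc^{\open}, \tau)$ is a geometric site, where $\tau$ is the topology of finitely jointly surjective families, and that the induced topology $\tau^{\ad}$ on $\ExtrDisc^{\open}$ coincides with $\tau_{\open}$. We verify the four axioms of an admissibility structure (\cref{definition: admissibility}(i)), then compatibility with $\tau$, and finally identify the induced topology.

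For (a), homeomorphisms are open embeddings. For (b), \cref{cor:pullbacks in extrdisc} says that the pullback of an open embedding $U \to X$ along any $X' \to X$ exists in $\ExtrDisc$. It is computed in $\CHaus$ as the preimage of an open, hence clopen, subset of $X'$. That preimage is again extremally disconnected (\cref{rem:extrDisc is closed under open subspaces}), so the base change is an open embedding. Axiom (d) is exactly \cref{lemma:retract}. For (c), take $g \colon Y \to Z$ an open embedding and $h = g \circ f$. If $f$ is an open embedding, then $h$ is one, since compositions of open embeddings are open embeddings. Conversely, suppose $h$ is an open embedding. Then $f$ is injective because $h$ is. Moreover, $f(X) = g^{-1}(h(X))$ is open in $Y$, and $f$ is a homeomorphism onto its image, because $g$ is injective and $h$ identifies $X$ with an open subspace of $Z$. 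Hence $f$ is an open embedding.

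For compatibility (ii), let $\{V_i \to X\}$ generate a covering sieve. We may refine it to a finite jointly surjective family of maps $p_i \colon S_i \to X$. The coproduct map $\coprod S_i \to X$ is then a continuous surjection onto an extremally disconnected space. By \cref{rem:extrdisc projective} it has a section $\sigma$. Set $U_i = \sigma^{-1}(S_i)$. These sets are clopen, hence extremally disconnected, they cover $X$, and each inclusion $U_i \hookrightarrow X$ factors through $p_i$ via $\sigma$. So the inclusions $U_i \hookrightarrow X$ form a finite jointly surjective family of open embeddings lying in the sieve.

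Finally, by the description following \cref{definition: admissibility}, a sieve on $C$ in $\ExtrDisc^{\open}$ is $\tau^{\ad}$-covering exactly when its image generates a $\tau$-covering of $C$. By the section argument above, and by compactness applied to open covers, this happens exactly when the sieve contains a finite jointly surjective family of open embeddings. Thus $\tau^{\ad} = \tau_{\open}$, and \cref{theorem: geometric site} yields the claim.

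The main obstacle is the compatibility step. It relies on projectivity (Gleason) and on clopen subsets remaining extremally disconnected. A secondary point is checking that compatibility only requires sieves generated by finite families, which holds because the topology on $\ExtrDisc$ is generated by finite families.
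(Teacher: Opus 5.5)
Your proposal is correct and follows essentially the same route as the paper. You check the admissibility axioms: base change via the $\CHaus$ pullback, which is a clopen preimage, and retracts via the retract lemma. You then get compatibility by taking a Gleason section of $\coprod S_i \to X$ and pulling it back to a finite clopen cover whose inclusions lie in the sieve.

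You also verify that the induced topology $\tau^{\ad}$ agrees with $\tau_{\open}$ and spell out the triangle axiom; the paper leaves both implicit. The section argument is not needed for the first of these. Finitely many jointly surjective maps, each factoring through an open embedding in the sieve, already give finitely many jointly surjective open embeddings in the sieve.
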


\begin{proof}
    By \cref{theorem: geometric site} we need to prove that $(\ExtrDisc, \ExtrDisc^{\open}, \tau_{\open})$ defines a geometric site. We know that equivalences are open embeddings (evident), open embeddings are stable under pullback (\cref{cor:pullbacks in extrdisc}), open embeddings satisfy the triangle condition (classic exercise) and are closed under retracts (\cref{lemma:retract}). So $(\ExtrDisc, \ExtrDisc^{\open})$ is an admissibility structure.
	
	\medskip \noindent We are left with verifying that $\ExtrDisc^{\open}$ is compatible with $\tau$. Let $\{g_{\alpha} \colon S_{\alpha} \to S \}_{\alpha \in J}$ be a covering sieve in $\ExtrDisc$, meaning there exists a finite subset $J' \subseteq J$ such that $g \coloneqq (g_{\alpha})_{\alpha \in J'} \colon \coprod_{\alpha \in J'} S_{\alpha} \to S$ is surjective. Because $S$ is extremally disconnected, there exists a section $s \colon S \to \coprod_{\alpha \in J'} S_{\alpha}$ (\cref{rem:extrdisc projective}). For $\alpha \in J'$ we write $s_{\alpha} = s|_{s^{-1}(S_{\alpha})} \colon s^{-1}(S_{\alpha}) \to S_{\alpha}$. Then, the composition
    \[ g_{\alpha} \circ s_{\alpha} = (g \circ s)|_{s^{-1}(S_{\alpha})} = (\id_S)|_{s^{-1}(S_{\alpha})} \colon s^{-1}(S_{\alpha}) \to S \]
    is the inclusion map $s^{-1}(S_{\alpha}) \hookrightarrow S$. This is an open embedding and lies in the covering sieve $\{g_{\alpha} \}_{\alpha \in J}$ since it occurs by precomposing maps to $g_{\alpha}$. Moreover, $s^{-1}(S_{\alpha}) \subseteq S$ is clopen and thus extremally disconnected again (\cref{rem:extrDisc is closed under open subspaces}). Furthermore, $\{g_{\alpha} \circ s_{\alpha} \}_{\alpha \in J'}$ is a finite family of maps which are jointly surjective. So it is a $\tau$-covering consisting of admissible morphisms which shows that the admissibility structure is compatible with $\tau$.
\end{proof}

\noindent We end this section with a formal implication of this result.

\begin{cor}
    The $\infty$-topos $\Cond(\An)$ is (co)monadic over $\Cond^{\open}(\An)$. In particular, the functor $j^* \colon \Cond(\An) \to \Cond^{\open}(\An)$ creates all (co)limits.
\end{cor}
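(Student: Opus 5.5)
The plan is to derive this corollary formally from \cref{thm:main} together with \cref{lemma:monadic}, with no further input about condensed anima. By \cref{thm:main}, the geometric site $(\ExtrDisc, \ExtrDisc^{\open}, \tau_{\open})$ yields, through \cref{theorem: geometric site}, an adjunction $j_! \dashv j^*$ between $\Cond^{\open}(\An)$ and $\Cond(\An)$. Here $j_!$ is an equivalence onto a fracture subcategory $\Cond(\An)^{\corp}$. First, I will transport the fractured structure along this equivalence. The composite $\Cond^{\open}(\An) \simeq \Cond(\An)^{\corp} \hookrightarrow \Cond(\An)$ is $j_!$, and the right adjoint of the inclusion, composed with the inverse equivalence, is $j^*$ up to natural equivalence. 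So it suffices to prove the statement for the pair $\Cond(\An)^{\corp} \rightleftarrows \Cond(\An)$.

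Next, I apply \cref{lemma:monadic} to the fractured $\infty$-topos $\Cond(\An)^{\corp} \to \Cond(\An)$. It says that the adjunction $(j_!, j^*)$ is monadic and the adjunction $(j^*, j_*)$ is comonadic, where $j_*$ exists by \cref{rem:right adjoint}. The first adjunction exhibits $\Cond(\An)$ as monadic over the corporeal part $\Cond^{\open}(\An)$: the right adjoint $j^*$ goes from $\Cond(\An)$ to $\Cond^{\open}(\An)$, it is conservative by \cref{dfn: fractured topos}(iii), and it preserves the relevant geometric realizations because it preserves all small colimits. The second adjunction exhibits $\Cond(\An)$ as comonadic over $\Cond^{\open}(\An)$, since here $j^*$ is the left adjoint, it is conservative, and it preserves all limits as a right adjoint. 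This gives the ``(co)monadic'' part of the statement.

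The creation of limits and colimits then follows from the last clause of \cref{lemma:monadic}. A monadic right adjoint creates limits, and a comonadic left adjoint creates colimits. Since both roles are played by the same functor $j^*$, it creates all limits and colimits.

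The only point needing care is bookkeeping rather than mathematics: \cref{lemma:monadic} is stated for the inclusion of the fracture subcategory, while here $j^*$ lands in $\Shv(\ExtrDisc^{\open})$. Monadicity, comonadicity and creation of (co)limits are invariant under precomposing or postcomposing with equivalences, so this identification goes through without further work.
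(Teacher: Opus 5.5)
Your proposal is correct and follows the paper's proof, which simply combines \cref{lemma:monadic} with \cref{thm:main}; transporting along the equivalence $\Cond^{\open}(\An) \simeq \Cond(\An)^{\corp}$ is the step the paper leaves implicit. You also correctly use the colimit preservation of $j^*$ for the monadicity of $(j_!, j^*)$ and its limit preservation for the comonadicity of $(j^*, j_*)$.
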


\begin{proof}
	This follows from combining \cref{lemma:monadic,thm:main}.
\end{proof}

\subsection{From condensed anima to sheaves of spaces}
While formal implications of the fractured structure are a first good step, by better understanding $\Cond^{\open}(\An)$, we obtain much more powerful results about $\Cond(\An)$.

\medskip \noindent Here we benefit from specific features of the topology on extremally disconnected spaces, 
\begin{rem}\label{rem:extrdisc sheaves}
    Let $S$ be a profinite set. Then we can construct three different sites on open (or clopen) subsets of $S$:
    \begin{itemize}[leftmargin=*]
        \item $(\Open_S, \tau)$: The poset of open subsets of $S$ with the topology of jointly surjective families of maps.
        \item $(\Clopen_S, \tau)$: The poset of clopen subsets of $S$ with the topology of jointly surjective families of maps.
        \item $(\Clopen_S, \tau^{\text{fin}})$: The poset of clopen subsets of $S$ with the topology of finitely jointly surjective families of maps.
    \end{itemize}
    Moreover, the evident inclusion functors 
    \[
        \begin{tikzcd}
            (\Clopen_S, \tau^{\text{fin}}) \arrow[r, hookrightarrow, "\iota_f"] & (\Clopen_S, \tau) \arrow[r, hookrightarrow, "\iota_c"] & (\Open_S, \tau)
        \end{tikzcd}
    \]
    induce an equivalence on $\infty$-categories of sheaves 
    \[
        \begin{tikzcd}
           \Shv(S) = \Shv(\Open_S, \tau) \arrow[r, "\simeq"', "(\iota_c)^*"] &  \Shv(\Clopen_S, \tau) \arrow[r, "\simeq"', "(\iota_f)^*"] & \Shv(\Clopen_S, \tau^{\text{fin}}).
        \end{tikzcd}
    \]
    Indeed, as $S$ is compact, every arbitrary clopen covering of $S$ has a finite subcovering, proving that $(\iota_f)^*$ is an equivalence. Moreover, by \cite[Proposition 3.1.7]{arhangelskijtkachenko2008topologicalgroups}, every totally disconnected locally compact Hausdorff space admits a clopen basis, meaning $\iota_c$ is a basis in the sense of \cite[below Definition A.2]{aoki2023tens}. Hence, by \cite[Corollary A.8]{aoki2023tens}, $(\iota_c)^*$ is an equivalence as well.
\end{rem}

\noindent We need to analyze one last site. 

\begin{lemma} \label{lemma:extrDisc over S is clopenS}
    Let $S$ be an extremally disconnected space. Then, the functor 
    \[ \rho_S\colon (\ExtrDisc^{\open})_{/S} \longrightarrow \Clopen_S, \ (i \colon U \to S) \mapsto i(U) \] is an equivalence of categories.
\end{lemma}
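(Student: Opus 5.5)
The plan is to verify directly that $\rho_S$ is well defined, fully faithful and essentially surjective. Both categories are posets up to equivalence, so the main work is checking that $(\ExtrDisc^{\open})_{/S}$ is (equivalent to) a poset and matching its order with inclusion of clopen subsets.

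\textbf{Well-definedness.} An object of $(\ExtrDisc^{\open})_{/S}$ is an open embedding $i \colon U \to S$ with $U$ extremally disconnected. Then $i(U)$ is open in $S$. It is also compact, being the continuous image of the compact space $U$, and hence closed in the Hausdorff space $S$. So $i(U) \in \Clopen_S$. A morphism from $(i \colon U \to S)$ to $(i' \colon U' \to S)$ is an open embedding $f \colon U \to U'$ with $i' \circ f = i$. This forces $i(U) = i'(f(U)) \subseteq i'(U')$, so $\rho_S$ sends morphisms to inclusions and is functorial.

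\textbf{Full faithfulness.} Since $i'$ is injective, there is at most one $f$ with $i' \circ f = i$, so the hom-sets of the slice are empty or singletons. Hence $\rho_S$ is faithful and the slice is a preorder. For fullness, suppose $i(U) \subseteq i'(U')$. Since $i'$ is an open embedding, it is a homeomorphism onto its image, so we may define $f \coloneqq (i')^{-1}|_{i(U)} \circ i$. This $f$ is continuous and injective. It is open because $i$ is open into $S$, hence into $i(U)$, and $(i')^{-1}$ identifies open subsets of $i'(U')$ with open subsets of $U'$. Its image $(i')^{-1}(i(U))$ is open in $U'$. So $f$ is an open embedding with $i' \circ f = i$.

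\textbf{Essential surjectivity.} Given a clopen $V \subseteq S$, the space $V$ is compact Hausdorff and extremally disconnected, by \cref{rem:extrDisc is closed under open subspaces}. Its inclusion $V \hookrightarrow S$ is an open embedding, so it is an object of the slice mapping to $V$.

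\textbf{Expected obstacle.} Nothing here is deep. The one point needing care is the closedness of images: the slice only asks for open embeddings, and we must know their images are clopen. This follows from compactness of objects of $\ExtrDisc$, and it is the reason the target is $\Clopen_S$ rather than $\Open_S$.
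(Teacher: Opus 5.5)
Your proof is correct and follows the same route as the paper: the slice is a preorder because the structure maps are injective, and sending a clopen $V \subseteq S$ to the inclusion $V \hookrightarrow S$ provides the inverse. You also make explicit two points the paper leaves implicit, namely that images of open embeddings from compact spaces are clopen, and that an inclusion $i(U) \subseteq i'(U')$ lifts to an open embedding $U \to U'$ over $S$.
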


\begin{proof}
 By definition $\Clopen_S$ is a poset. Moreover, as morphisms in $\ExtrDisc^{\open}_{/S}$ are inclusions, the category $\ExtrDisc^{\open}_{/S}$ is a preorder. The result now follows from the fact that there is a functor $\Clopen_S \to (\ExtrDisc^{\open})_{/S}$ sending a clopen subset $U \subseteq S$ to the open embedding $U \to S$, which is necessarily the inverse.
\end{proof}

\begin{prop} \label{prop:condopen over yoS}
    Let $S \in \ExtrDisc$ and let $\pi_S\colon \ExtrDisc^{\open}_{/S} \to \ExtrDisc^{\open}$ be the projection functor. Then, the following functors are equivalences of $\infty$-categories:
    \[
        \begin{tikzcd} 
            \Shv(\Open_S, \tau) \arrow[r, "\simeq"', "(\iota_f \circ \iota_c)^*"] &   \Shv(\Clopen_S, \tau^{\mathrm{fin}}) \arrow[r, "\simeq"', "(\rho_S)^*"] & \Shv(\ExtrDisc^{\open}_{/S}) \arrow[r, "\simeq"', "(\pi_S)_!"] & \Cond^{\open}(\An)_{/\yo_{\open}(S)}
        \end{tikzcd}.
    \]
    Hence, these compose to an equivalence $\eta_S\colon\Shv(S) \xrightarrow{\simeq} \Cond^{\open}(\An)_{/\yo_{\open}(S)}$.
\end{prop}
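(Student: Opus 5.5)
The plan is to verify the three equivalences one at a time; the first two are essentially already done. The first functor $(\iota_f\circ\iota_c)^*$ is an equivalence by \cref{rem:extrdisc sheaves}. That remark is stated for profinite $S$, and every extremally disconnected space is profinite. The second functor $(\rho_S)^*$ is induced by the equivalence of categories $\rho_S$ of \cref{lemma:extrDisc over S is clopenS}. So it remains to check that $\rho_S$ matches the Grothendieck topologies. Here we equip $\ExtrDisc^{\open}_{/S}$ with the topology induced from $\tau_{\open}$, under which a family over $(U\to S)$ covers if it is finitely jointly surjective onto $U$. Under $\rho_S$ these families correspond exactly to the finitely jointly surjective families of clopen subsets in $(\Clopen_S,\tau^{\mathrm{fin}})$. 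An equivalence of sites induces an equivalence of sheaf $\infty$-categories, so $(\rho_S)^*$ is an equivalence.

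The substantive step is the third functor $(\pi_S)_!$. I would invoke the general fact \cite[Proposition 5.1.6.12 / Corollary 5.1.6.12]{lurie2009htt}: for a small $\infty$-category $\sC$ and an object $C$, left Kan extension along the projection gives an equivalence $\mathcal{P}(\sC_{/C})\simeq \mathcal{P}(\sC)_{/\yo C}$. Its sheaf version says that for an $\infty$-site $(\sC,\tau)$ there is an equivalence $\Shv(\sC_{/C},\tau_{/C})\simeq \Shv(\sC)_{/L\yo C}$, where $L$ is sheafification. To prove the sheaf version, I would note that $L$ commutes with slicing. Since $L$ is left exact and colimit preserving, it induces a left exact localization $\mathcal{P}(\sC)_{/\yo C}\to \Shv(\sC)_{/L\yo C}$; this is the standard fact that slices of an $\infty$-topos along $L$ are again localizations (cf. \cite[Proposition 6.3.5.1]{lurie2009htt}). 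I would then check that under the presheaf equivalence this localization is exactly sheafification for the induced topology on $\sC_{/C}$. This amounts to checking that the generating local equivalences agree: sieve inclusions $R\hookrightarrow \yo(U\to C)$ in $\mathcal{P}(\sC_{/C})$ go to sieve inclusions $R\hookrightarrow \yo U$ over $\yo C$, and covering sieves correspond by definition of the induced topology.

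The main subtlety is exactly this last point. The representable $\yo_{\open}(S)$ is the sheafified representable, since by \cref{rem:extrdisc not subcanonical} the topology is not subcanonical, so one must not confuse the slice over $\yo S$ with the slice over $\yo_{\open} S$. I would handle this by working first with presheaves and only then localizing, using that local equivalences are stable under pullback along $\yo S\to \yo_{\open}S$ (by left exactness of $L$). Pulling back then identifies $\Shv(\sC)_{/L\yo C}$ with the full subcategory of $\mathcal{P}(\sC)_{/\yo C}$ of objects that are local for the sliced covering sieves. Finally, composing the three equivalences gives $\eta_S$.
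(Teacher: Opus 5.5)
Your proposal is correct and follows the paper's proof. The first two equivalences are justified exactly as there, and you additionally make explicit that $\rho_S$ identifies the topology induced by $\tau_{\open}$ on the slice with $\tau^{\mathrm{fin}}$. For $(\pi_S)_!$ the paper simply cites \cite[Lemma 2.2]{jansen2024stratified}, whose content is the sheafified slice equivalence you sketch.

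The one step of your sketch that needs an extra line is that every $L$-equivalence lying over $\yo C$ belongs to the strongly saturated class generated by the sliced covering sieves, so that the local objects really coincide. This holds because the class of maps $A \to B$ of presheaves that lie in that saturated class for every choice of $B \to \yo C$ is itself strongly saturated and contains all covering sieve inclusions.
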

    
\begin{proof}
    The functor $(\iota_f \circ \iota_c)^*$ is an equivalence by \cref{rem:extrdisc sheaves}. Moreover, $(\rho_S)^*$ is an equivalence, as $\rho_S$ is an equivalence by \cref{lemma:extrDisc over S is clopenS}. Finally, $(\pi_S)_!$ is an equivalence by \cite[Lemma 2.2]{jansen2024stratified}.
\end{proof}

\begin{cor}
    Suppose that $X \in \Cond^{\open}(\An)$ is given by a colimit of representables, i.e.~via an equivalence $f\colon\colim_i \yo_{\open} S_i \xrightarrow{\simeq} X$. Then,
    \[ \Cond^{\open}(\An)_{/X} \xrightarrow[\simeq]{f^*}  \lim_i \Cond^{\open}(\An)_{/\yo_{\open} S_i}  \xrightarrow[\simeq]{(\eta_{S_i})^{-1}} \lim_i \Shv(S_i) \]
    is an equivalence of $\infty$-categories.
\end{cor}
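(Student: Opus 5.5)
The plan has two steps. First, descent in $\Cond^{\open}(\An)$ gives the equivalence $f^*$. Second, the equivalences $\eta_{S_i}$ of \cref{prop:condopen over yoS} are assembled into a natural transformation of diagrams indexed by $i$.

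For the first step, recall that in any $\infty$-topos $\sX$ the functor $\sX^{\op} \to \widehat{\mathscr{C}\mathrm{at}}_\infty$, $Y \mapsto \sX_{/Y}$ (with functoriality by pullback), preserves small limits. This is descent, \cite[Theorem 6.1.3.9]{lurie2009htt}. Apply it to $\sX = \Cond^{\open}(\An)$ and the colimit $\colim_i \yo_{\open} S_i$. Pulling back along $f$ and along the structure maps $\yo_{\open} S_i \to \colim_i \yo_{\open} S_i$ gives an equivalence
\[ \Cond^{\open}(\An)_{/X} \xrightarrow{\simeq} \Cond^{\open}(\An)_{/\colim_i \yo_{\open} S_i} \xrightarrow{\simeq} \lim_i \Cond^{\open}(\An)_{/\yo_{\open} S_i}. \]
The first arrow is pullback along the equivalence $f$, hence an equivalence. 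The second is the descent equivalence. This is the map labelled $f^*$.

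For the second step, the transition functors in $\lim_i \Cond^{\open}(\An)_{/\yo_{\open} S_i}$ are pullback along maps $\yo_{\open} S_i \to \yo_{\open} S_j$. The diagram is indexed by the colimit index category, whose morphisms need not be open embeddings. So $\lim_i \Shv(S_i)$ must carry transition functors that make the $\eta_{S_i}$ natural. The cleanest choice is to define them by transport: set the transition functors on $\Shv(S_i)$ to be $\eta_{S_i}^{-1} \circ (\text{pullback}) \circ \eta_{S_j}$. Then $\{\eta_{S_i}\}_i$ is tautologically a natural equivalence of $I^{\op}$-indexed diagrams of $\infty$-categories. A pointwise equivalence of diagrams induces an equivalence on limits, which gives $(\eta_{S_i})^{-1}$ on the limit. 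Composing the two equivalences proves the corollary.

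The main obstacle is making this coherent at the $\infty$-categorical level. The $\eta_{S}$ are defined one object at a time, so one needs a functor $I^{\op} \to \widehat{\mathscr{C}\mathrm{at}}_\infty$ lifting the data.

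\begin{itemize}
\item I would first realize the slice diagram $i \mapsto \Cond^{\open}(\An)_{/\yo_{\open} S_i}$ as the straightening of the pulled-back target fibration.
\item I would then transport along the equivalences by straightening/unstraightening: replace the cartesian fibration classifying the diagram by an equivalent one whose fibers are $\Shv(S_i)$, via the fiberwise equivalences $\eta_{S_i}$.
\end{itemize}

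Only essential surjectivity and full faithfulness fiberwise are needed, because a map of cartesian fibrations that is a fiberwise equivalence is an equivalence (\cite[Corollary 2.4.4.4]{lurie2009htt}).

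Identifying the transported transition functors with geometric pullback of sheaves along $S_i \to S_j$ is not required by the statement. That identification would need more care, since non-open maps are invisible in $\ExtrDisc^{\open}$.
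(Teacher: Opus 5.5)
Your proposal is correct and follows the same route as the paper. Descent in the $\infty$-topos $\Cond^{\open}(\An)$ gives $f^*$, \cref{prop:condopen over yoS} supplies the pointwise equivalences $\eta_{S_i}$, and your transport of the transition functors onto the $\Shv(S_i)$ simply spells out the coherence that the paper's two-line proof leaves implicit.
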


\begin{proof}
    The first equivalence is an immediate consequence of descent in $\infty$-topoi \cite[Theorem 6.1.3.9]{lurie2009htt} and the second follows from  \cref{prop:condopen over yoS}.
\end{proof}

\noindent We now use these computations to give an explicit collection of enough points of $\Cond(\An)$. The mere existence of enough points already follows from Deligne's completeness theorem \cite[Theorem A.4.0.5]{lurie2018sag}, see \cite[Corollary 2.4.5]{barwickhaine2019pyknotic}. However, we are unaware of any explicit characterization of these points in the literature.

\begin{lemma} \label{lemma:shvs hypercomplete}
	Let $S \in \ProFin$. Then, $\Shv(S)$ is hypercomplete.
\end{lemma}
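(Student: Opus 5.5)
Since $S$ is profinite, it has covering dimension zero. We would therefore invoke Lurie's result that sheaves on a paracompact space of finite covering dimension are hypercomplete \cite[Corollary 7.2.1.12]{lurie2009htt}: if $X$ is paracompact of covering dimension $\leq n$, then $\Shv(X)$ is locally of homotopy dimension $\leq n$, and hence hypercomplete. The plan has two steps. First, check the hypotheses: $S$ is compact Hausdorff, hence paracompact. Second, show that $S$ has covering dimension $0$. For this, take any finite open cover of $S$, refine it by clopen subsets using the clopen basis from \cref{rem:extrdisc sheaves} together with compactness, and disjointify the resulting finite clopen cover by successive differences. This produces a refinement in which every point lies in exactly one member.

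An alternative, more self-contained route would use \cref{rem:extrdisc sheaves} to identify $\Shv(S) \simeq \Shv(\Clopen_S, \tau^{\mathrm{fin}})$. Every covering in this finitary site can be refined to a finite disjoint clopen decomposition, so the sheaf condition reduces to products. One would then argue that $\Shv(S)$ has homotopy dimension $0$ locally, because surjections are locally split: an effective epimorphism onto a sheaf admits sections on a disjoint clopen refinement, which glue. Equivalently, one could present $S = \lim_i S_i$ with finite discrete $S_i$, so that $\Shv(S)$ is a cofiltered limit of the $\infty$-topoi $\Shv(S_i) \simeq \An^{S_i}$; but hypercompleteness is not obviously preserved by such limits, so I would avoid this route.

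The main obstacle is matching the dimension notion: Lurie's corollary uses covering dimension for paracompact spaces, and I expect the only real work is verifying that profinite sets have covering dimension $0$ via the clopen-refinement argument above. If citing that corollary seems too heavy, the fallback is the local homotopy dimension $0$ argument in the finitary clopen site, combined with \cite[Corollary 7.2.1.12]{lurie2009htt} in its form for locally finite homotopy dimension.
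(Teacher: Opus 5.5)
Your main route is the paper's argument: covering dimension $0$, hence homotopy dimension $\leq 0$ by \cite[Theorem 7.2.3.6]{lurie2009htt}, hence hypercompleteness by Jardine's theorem \cite[Corollary 7.2.1.12]{lurie2009htt}. The only difference is that you prove covering dimension $0$ directly by clopen disjointification, where the paper cites \cite[Remark 7.2.3.3]{lurie2009htt}.

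One correction: Theorem 7.2.3.6 only gives \emph{global} homotopy dimension $\leq 0$, while Corollary 7.2.1.12 needs $\Shv(S)$ to be \emph{locally} of homotopy dimension $\leq 0$. You must supply this step yourself, as the paper does: the clopens $U \subseteq S$ generate $\Shv(S)$ under colimits, and each is again profinite, so $\Shv(S)_{/U} \simeq \Shv(U)$ has homotopy dimension $\leq 0$.
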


\begin{proof}
	By \cite[Remark 7.2.3.3]{lurie2009htt} $S$ has covering dimension $\leq 0$ since every finite set has covering dimension $\leq 0$. Thus, $\Shv(S)$ has homotopy dimension $\leq 0$ by \cite[Theorem 7.2.3.6]{lurie2009htt}. Since $\Shv(S) \simeq \Shv(\Clopen_S, \tau^{\text{fin}})$ by \cref{rem:extrdisc sheaves}, it is generated under colimits by the representables $\yo_{\open}U$ with $U \in \Clopen_S$. Because $U$ is also a profinite set, $\Shv(S)_{/\yo_{\open} U} \simeq \Shv(U)$ has homotopy dimension $\leq 0$ as well. This shows that $\Shv(S)$ is locally of homotopy dimension $\leq 0$. Jardine's theorem \cite[Corollary 7.2.1.12]{lurie2009htt} implies that it is hypercomplete.
\end{proof}

\begin{thm} \label{thm:formula enough points}
    The collection of points
    \[ \left\{ \Cond(\An) \longrightarrow \An, \ F \mapsto \colim_{\substack{U \subseteq S \ \mathrm{clopen} \\ s \in U}} F(U) \right\}_{S \in \ExtrDisc, \ s \in S} \]
    is jointly conservative.
\end{thm}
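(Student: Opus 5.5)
We deduce the statement from \cref{prop: enough points} applied to the fractured structure of \cref{thm:main}. It therefore suffices to produce a jointly conservative family of points of $\Cond^{\open}(\An)$ and to identify the composite stalk functors $\Cond(\An) \to \An$ with the displayed colimits. More precisely, the left adjoint of $j_* \circ x$ is $x^* \circ j^*$, so we must compute $x^*(j^*F)$.

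\textbf{Points of $\Cond^{\open}(\An)$.} For $S \in \ExtrDisc$ and $s \in S$, consider the point $s_* \colon \An \to \Shv(S)$ with stalk functor $G \mapsto \colim_{s \in U} G(U)$. Here $U$ ranges over open neighbourhoods of $s$, or equivalently, since clopens form a basis (\cref{rem:extrdisc sheaves}), over clopen neighbourhoods. By \cref{lemma:shvs hypercomplete}, $\Shv(S)$ is hypercomplete. The space $S$ is a sober topological space, and hypercomplete sheaves on it are detected on stalks. To see this, one may use that $\Shv(S)$ has homotopy dimension $\leq 0$ locally: then $\pi_0$ of sections over a clopen $U$ is computed by sheaf-theoretic $\pi_0$, and a map inducing isomorphisms on all stalks of homotopy sheaves is $\infty$-connective, hence an equivalence. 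Consequently the points $\{s_*\}_{s\in S}$ are jointly conservative on $\Shv(S)$.

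Next, transport these points along $\eta_S$ from \cref{prop:condopen over yoS} and compose with the étale geometric morphism $\Cond^{\open}(\An)_{/\yo_{\open}S} \to \Cond^{\open}(\An)$. Whose left adjoint is $G \mapsto G \times \yo_{\open}S$. These points of $\Cond^{\open}(\An)$ are jointly conservative as $(S,s)$ varies. Indeed, the slices over representables cover: pullback to $\Cond^{\open}(\An)_{/\yo_{\open}S}$ for all $S$ is jointly conservative, because the $\yo_{\open}S$ generate under colimits and $\coprod_S \yo_{\open} S \to 1$ is an effective epimorphism, and each slice is detected by its stalks.

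\textbf{Computing the stalks.} For $G \in \Cond^{\open}(\An)$, the object $G \times \yo_{\open}S$ corresponds under $\eta_S^{-1}$ to the sheaf $U \mapsto \Map(\yo_{\open}U, G) = G(U)$ on $\Clopen_S$. This uses Yoneda together with \cref{lemma:extrDisc over S is clopenS}: a map $\yo_{\open}U \to \yo_{\open}S$ over $S$ is unique. Hence the stalk at $s$ is $\colim_{s\in U} G(U)$. Now set $G = j^*F$. Since $j_!$ is left Kan extension along $\ExtrDisc^{\open} \to \ExtrDisc$, the right adjoint $j^*$ is restriction, so $(j^*F)(U) = F(U)$. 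This yields exactly the displayed formula. The composite $F \mapsto \colim_{s\in U} F(U)$ is left exact and colimit-preserving, being a filtered colimit of evaluations applied to the colimit-preserving $j^*$, so it is indeed a point.

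\textbf{Main obstacle.} The delicate step is the joint conservativity on each $\Shv(S)$ for extremally disconnected $S$. Hypercompleteness alone does not give enough points in general, so we must check that stalks detect $\infty$-connectivity here. We would argue via homotopy sheaves on the clopen basis. By finite-cover compactness, a map of sheaves of sets on $(\Clopen_S,\tau^{\text{fin}})$ that is a stalkwise isomorphism is an isomorphism: locally around each point there are lifts and agreements, and these patch along finite clopen partitions. Hence all homotopy sheaves of the fiber vanish, and hypercompleteness concludes. The remaining bookkeeping is identifying the étale base-change with the slice equivalence $\eta_S$, which is routine.
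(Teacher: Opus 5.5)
Your proposal is correct and follows the paper's argument. Points of $\Cond^{\open}(\An)$ come from stalks on the hypercomplete $\Shv(S)\simeq\Cond^{\open}(\An)_{/\yo_{\open}S}$, pulled back along $-\times\yo_{\open}S$; they are jointly conservative because the $\yo_{\open}S$ cover the terminal object, and \cref{prop: enough points} transfers them to $\Cond(\An)$. The differences are only in detail: you sketch by hand that stalks detect equivalences of hypercomplete objects of $\Shv(S)$ where the paper cites Lurie's \emph{Higher Algebra} (Lemma A.3.9), you use the effective epimorphism $\coprod_S\yo_{\open}S\to *$ where the paper uses $\colim_S\yo_{\open}S\simeq *$ with universality of colimits, and you actually verify the stalk formula (via $j^*$ being restriction and the inverse of $\eta_S$), which the paper only asserts.
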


\begin{proof}
    The main tool is the fractured structure (\cref{thm:main}). We will provide a collection of jointly conservative points in $\Cond^{\open}(\An)$, which induces a collection of jointly conservative points in $\Cond(\An)$ by \cref{prop: enough points}.
    
    \medskip \noindent First, let $S \in \ExtrDisc$. Then, $\Cond^{\open}(\An)_{/\yo_{\open} S} \simeq \Shv(S)$ by \cref{prop:condopen over yoS}. It is hypercomplete by \cref{lemma:shvs hypercomplete}. So the collection of points $s \colon \An \to \Cond^{\open}(\An)_{/\yo_{\open} S}$ with $s \in S$ is conservative by \cite[Lemma A.3.9]{lurie2017ha}. Then, we obtain a collection of points
    \[
        \begin{tikzcd}
            \An \arrow[r, shift right = 1.5, "s_{*}", swap, ""name = 2] & \Cond^{\open}(\An)_{/\yo_{\open} S} \arrow[l, shift right = 1.5, "s^*", swap, ""name = 1] \arrow[r, swap, shift right = 1.5, ""name = 4] & \Cond^{\open}(\An) \arrow[l, shift right = 1.5, "- \times \yo_{\open} S", swap, ""name = 3]
            \arrow[phantom,from=1,to=2,"\scriptscriptstyle\bot"]
            \arrow[phantom,from=3,to=4,"\scriptscriptstyle\bot"]
        \end{tikzcd}
    \]
    running over $S \in \ExtrDisc$ and $s \in S$. We will now verify that this collection of maps is jointly conservative.
    
    \medskip \noindent Let $f \colon X \to Y$ be a map in $\Cond^{\open}(\An)$ and suppose that $s^*(f \times \yo_{\open} S)$ is an equivalence for all $S \in \ExtrDisc$ and $s \in S$. By conservativity of the $s^*$, we deduce that $f \times \yo_{\open} S$ is an equivalence for all $S \in \ExtrDisc$. Moreover, $\colim_{S \in \ExtrDisc^{\open}} \yo_{\open} S \simeq *$ by \cite[Proposition 6.2.13]{cisinski2019highercategories}. We now have the following diagram 
    \[
        \begin{tikzcd}[column sep=2.5cm]
            \underset{S \in \ExtrDisc^{\open}}\colim (X \times \yo_{\open} S) \arrow[r, "\underset{S \in \ExtrDisc^{\open}}{\colim}(f\times \yo_{\open} S)", "\simeq"'] \arrow[d, "\simeq"'] & \underset{S \in \ExtrDisc^{\open}}{\colim} (Y \times \yo_{\open} S) \arrow[d, "\simeq"]
            \\ X \times * \simeq X \times \underset{S \in \ExtrDisc^{\open}}{\colim} \yo_{\open} S \arrow[r] & Y \times \underset{S \in \ExtrDisc^{\open}}{\colim} \yo_{\open} S \simeq Y \times *
        \end{tikzcd}
    \]
    The top map is an equivalence by our above observations. The vertical maps are equivalences because $\infty$-topoi are Cartesian closed. Hence, the bottom map is an equivalence.
    
    \medskip \noindent By \cref{prop: enough points} the collection of points
    \[
    	\begin{tikzcd}
    		\An \arrow[r, shift right = 1.5, "s_{*}", swap, ""name = 2] & \Cond^{\open}(\An)_{/\yo_{\open} S} \arrow[l, shift right = 1.5, "s^*", swap, ""name = 1] \arrow[r, swap, shift right = 1.5, ""name = 4] & \Cond^{\open}(\An) \arrow[l, shift right = 1.5, "- \times \yo_{\open} S", swap, ""name = 3] \arrow[r, "j_*", swap, shift right = 1.5, ""name = 6] & \arrow[l, shift right = 1.5, "j^*", swap, ""name = 5] \Cond(\An)
    		\arrow[phantom,from=1,to=2,"\scriptscriptstyle\bot"]
    		\arrow[phantom,from=3,to=4,"\scriptscriptstyle\bot"]
    		\arrow[phantom,from=5,to=6,"\scriptscriptstyle\bot"]
    	\end{tikzcd}
    \]
    running over $S \in \ExtrDisc$ and $s \in S$ is jointly conservative. The top composite has the explicit formula claimed above.
\end{proof}

\noindent We can now apply \cref{cor:hypercomplete} for a new proof of hypercompleteness, previously shown in \cite[Corollary 2.4.3]{barwickhaine2019pyknotic}.

\begin{cor} \label{corollary: hypercomplete}
	The $\infty$-topos $\Cond(\An) = \Shv_{\An}(\ExtrDisc)$ is hypercomplete.
\end{cor}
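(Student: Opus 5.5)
The plan is to apply \cref{cor:hypercomplete} to the fractured $\infty$-topos $j_! \colon \Cond^{\open}(\An) \to \Cond(\An)$ of \cref{thm:main}. It then suffices to show that $\Cond^{\open}(\An)$ has enough points. This is in fact already established in the proof of \cref{thm:formula enough points}: there we produced the family of points
\[ s^* \circ (- \times \yo_{\open} S) \colon \Cond^{\open}(\An) \longrightarrow \An, \qquad S \in \ExtrDisc,\ s \in S, \]
and verified that it is jointly conservative.

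I would recall the argument in three steps.
\begin{enumerate}[label=(\arabic*), leftmargin=*]
\item By \cref{prop:condopen over yoS}, each slice $\Cond^{\open}(\An)_{/\yo_{\open} S}$ is equivalent to $\Shv(S)$.
\item By \cref{lemma:shvs hypercomplete}, $\Shv(S)$ is hypercomplete, since extremally disconnected spaces are profinite. Its stalks at points $s \in S$ are therefore jointly conservative by \cite[Lemma A.3.9]{lurie2017ha}.
\item Suppose $f$ is a map such that $f \times \yo_{\open} S$ is an equivalence for every $S$. Since $\colim_S \yo_{\open} S \simeq *$ and products commute with colimits in an $\infty$-topos, $f$ itself is an equivalence.
\end{enumerate}
Hence $\Cond^{\open}(\An)$ has enough points, and \cref{cor:hypercomplete} gives hypercompleteness of $\Cond(\An)$.

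Alternatively, one can apply \cref{prop: enough points} directly to get enough points of $\Cond(\An)$, and then invoke \cite[Remark 6.5.4.7]{lurie2009htt}.

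All the substantive work has already been done earlier in the paper, so the proof is essentially a citation chain. The only point needing care is that ``enough points'' here means a family of geometric morphisms $\An \to \Cond^{\open}(\An)$, i.e.\ functors with left-exact left adjoints. The composite of $s^*$ with $-\times \yo_{\open} S$ (the pullback along the étale morphism to the slice) is indeed the inverse image of a geometric morphism, so this causes no difficulty.
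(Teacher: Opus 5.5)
Your proposal is correct and follows the paper's proof. The paper likewise obtains the corollary by applying \cref{cor:hypercomplete} to the fractured structure of \cref{thm:main}, with the enough-points input for $\Cond^{\open}(\An)$ supplied by the argument in the proof of \cref{thm:formula enough points}, which is the chain of steps you reconstruct.
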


\noindent One may also use the fractured structure to recover results about cohomology.

\begin{cor}
Let \label{corollary: same shape} $S \in \ExtrDisc$. Then, the shape of $\Shv(S)$ and the shape of $\Cond(\An)_{/\yo S}$ coincide and are given by the underlying homotopy type of $S$.
\end{cor}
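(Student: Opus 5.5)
The plan is to compare the two slice topoi through the fractured structure of \cref{thm:main}. First I would identify $\yo S$ as a corporeal object: under the equivalence of \cref{thm:main}, $j_!$ sends $\yo_{\open}(S)$ to the sheafification of the left Kan extension of the representable $\ExtrDisc^{\open}(-,S)$ along $\ExtrDisc^{\open}\to\ExtrDisc$. That left Kan extension is the representable $\ExtrDisc(-,S)$, and \cref{ex:subcanonical} shows it is already a sheaf. So $j_!\yo_{\open}(S)\simeq \yo S$. By \cref{rem:ff} we get a fully faithful functor
\[ \Shv(S)\simeq \Cond^{\open}(\An)_{/\yo_{\open}S} \xrightarrow{(j_!)_{/\yo_{\open} S}} \Cond(\An)_{/\yo S}, \]
using $\eta_S$ from \cref{prop:condopen over yoS}. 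By \cref{rem: fractured topoi via cohesive topoi} this functor preserves finite limits, and it has the right adjoint $(j^*)_{/\yo_{\open}S}$, which preserves colimits.

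Next I would show that this produces a geometric morphism whose inverse image is fully faithful. Consider
\[ \phi^* \colon \Shv(S)\to\Cond(\An)_{/\yo S},\qquad \phi_*\colon \Cond(\An)_{/\yo S}\to \Shv(S). \]
Here $\phi^*$ is the composite above, which is left exact and colimit preserving, and $\phi_*$ is the composite of $(j_*)_{/\yo_{\open}S}$ (the right adjoint of $(j^*)_{/}$) with $\eta_S^{-1}$. Actually $\phi^*=(j_!)_{/}\circ\eta_S$ has right adjoint $\eta_S^{-1}\circ(j^*)_{/}$, so the pair $(\phi^*,\phi_*)$ is a geometric morphism. Because $\phi^*$ is fully faithful, the shape functor satisfies $\mathrm{Sh}(\Cond(\An)_{/\yo S})\simeq\mathrm{Sh}(\Shv(S))$. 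To see this, write shape as the pro-anima $\Gamma_*\Gamma^*$. The constant-sheaf functor of the slice factors as $\phi^*\circ\Gamma_S^*$, and the global sections functor $\Gamma_{\yo S,*}=\Map(*, -)$ satisfies $\Gamma_{\yo S,*}\phi^*\simeq\Gamma_{S,*}$, since $\phi^*$ is fully faithful and preserves the terminal object. Hence the composite functors $\An\to\An$ agree, and so do the pro-objects.

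Finally I would identify the shape of $\Shv(S)$ with the underlying homotopy type of $S$. Since $S$ is profinite, this is the standard computation: $\Shv(S)$ is hypercomplete (\cref{lemma:shvs hypercomplete}) and of homotopy dimension $\le 0$, so its global sections commute with colimits. Writing $S=\lim S_i$ over finite discrete sets, the shape is the pro-finite-set $\{S_i\}$, which is the pro-homotopy type of $S$. If a more precise comparison is needed, one can use \cref{lemma:shvs hypercomplete} together with \cite[Theorem 7.2.3.6]{lurie2009htt}.

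The main obstacle is the first step. One must check carefully that the left Kan extension in \cref{theorem: geometric site} sends the sheafified representable $\yo_{\open}S$ to $\yo S$, since sheafification is nontrivial by \cref{rem:extrdisc not subcanonical}. I would handle this by noting that $j_!$ on sheaves is the sheafified left Kan extension, which commutes with sheafification of presheaves, and then use subcanonicity of $\ExtrDisc$.
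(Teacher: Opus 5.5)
Your comparison of the two shapes is correct and is essentially the paper's argument. You run it through the other geometric morphism attached to the same slice adjunction $(j_!)_{/}\dashv(j^*)_{/}$. The paper takes $(j^*)_{/}$ as the inverse image of $\Shv(S)\to\Cond(\An)_{/\yo S}$ and uses that its left adjoint $(j_!)_{/}$ preserves the terminal object (via Clough). You instead take the fully faithful, left exact $(j_!)_{/}$ as the inverse image and compute $\Gamma_*\Gamma^*$ directly. Your check that $j_!\yo_{\open}S\simeq\yo S$ makes explicit a point the paper uses silently.

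The gap is in your final step. Homotopy dimension $\le 0$ and hypercompleteness do not make global sections on $\Shv(S)$ commute with colimits. For $S$ with more than one point, $\Gamma(S,1\sqcup 1)$ is the set of clopen subsets of $S$, not a two-element set. Homotopy dimension is not even the relevant property for filtered colimits: for an infinite discrete set $D$, $\Shv(D)\simeq\An^{D}$ has homotopy dimension $0$, yet $\Gamma=\prod_D$ does not commute with filtered colimits. So the sentence meant to produce the pro-object $\{S_i\}_i$ does not do so.

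What you actually need is compactness of $S$. Every clopen $U\subseteq S$ is pulled back from some $U_i\subseteq S_i$. On $(\Clopen_S,\tau^{\mathrm{fin}})$, consider the presheaf $U\mapsto\colim_i\Map(U_i,K)$, with the colimit over $i$ large enough that $U=\pi_i^{-1}(U_i)$. It sends finite disjoint unions to products, hence is a sheaf. It receives the evident map from the constant presheaf $K$, and this map is an equivalence on stalks. By \cref{lemma:shvs hypercomplete} and enough points, it is therefore the constant sheaf $\Gamma^*K$. Evaluating at $S$ gives $\Gamma_*\Gamma^*K\simeq\colim_i\Map(S_i,K)$, so the shape is $\{S_i\}_i$. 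Alternatively, simply cite this computation as the paper does (Mair's thesis, Example 1.2.13).
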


\begin{proof}
	The geometric morphism $j^* \colon \Cond(\An)_{/\yo S} \to \Cond^{\open}(\An)_{/\yo_{\open}S} \simeq \Shv(S)$ admits a further left adjoint, so it preserves shapes by \cite[Proposition 1.7]{clough2024diff2}. The second part now follows from the fact that the shape of $\Shv(S)$ is the underlying homotopy type of $S$, by \cite[Example 1.2.13]{mair2025thesis}.
\end{proof}

\begin{rem}
    By \cite[Theorem 1.2.18, Corollary 1.2.33]{mair2025thesis}, the shape of $\Cond(\An)_{/\yo X}$ agrees with the shape of $\Shv(X)$ for every paracompact compactly generated space $X$, which is the underlying homotopy type of $X$ in the case of CW complexes \cite[Example 2.4]{hoyois2018highergalois} or profinite sets \cite[Example 1.2.13]{mair2025thesis}. On the other hand, there exists a locally contractible compact Hausdorff space $X$ such that the shape of $\Shv(X)$ does not agree with the underlying homotopy type of $X$, see \cite[Example 2.2.4]{carchedi2021etalehomotopy}.
\end{rem}

\begin{cor} \label{corollary: cohomology}
Condensed and sheaf cohomology over $\mathbb{Z}$ agree for extremally disconnected spaces.
\end{cor}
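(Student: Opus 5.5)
Since \cref{corollary: same shape} is already proved, the natural route is to read cohomology off the geometric morphism $j^* \colon \Cond(\An)_{/\yo S} \to \Cond^{\open}(\An)_{/\yo_{\open} S} \simeq \Shv(S)$, which has a further left adjoint. Condensed cohomology of $S$ with coefficients in $\mathbb{Z}$ is $\pi_0 \Map_{\Cond(\An)_{/\yo S}}(\ast, K(\mathbb{Z},n) \times \yo S)$, where $K(\mathbb{Z},n)$ is the constant condensed anima, i.e.\ the pullback of $K(\mathbb{Z},n)$ along the unique geometric morphism $\Cond(\An)_{/\yo S} \to \An$. Sheaf cohomology of $S$ is the same expression computed in $\Shv(S)$. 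Both are therefore cohomology of the terminal object with coefficients in a constant Eilenberg--MacLane object in the respective $\infty$-topos.

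First I will check that $j^*$ carries constant objects to constant objects. This holds because $j^*$ is the inverse image of a geometric morphism over $\An$, so it commutes with the constant-sheaf functors up to equivalence. It also preserves the terminal object. Next I will use the essential geometric morphism. Its left adjoint $j_!$ also preserves the terminal object, since $\id_S$ is corporeal (compare \cref{rem: fractured topoi via cohesive topoi}). Together with $j_! \dashv j^*$ this gives
\[\Map_{\Cond(\An)_{/\yo S}}(\ast, c K) \simeq \Map_{\Cond(\An)_{/\yo S}}(j_!\ast, cK) \simeq \Map_{\Shv(S)}(\ast, j^* cK) \simeq \Map_{\Shv(S)}(\ast, cK).\]
This is an instance of the standard fact that an essential geometric morphism whose left adjoint preserves the terminal object identifies shapes and cohomology with constant coefficients. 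One could alternatively cite \cref{corollary: same shape} directly, since cohomology with constant coefficients in $K(\mathbb{Z},n)$ is $\pi_0\Map_{\An}(\mathrm{Sh}, K(\mathbb{Z},n))$ for the shape $\mathrm{Sh}$ (taken as a pro-anima).

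Finally I will match these with the usual definitions. Condensed cohomology is $\pi_0$ of mapping spaces from $\yo S$ in $\Cond(\An)$, which equals the slice computation. Sheaf cohomology of $\Shv(S)$ via Eilenberg--MacLane objects agrees with derived-functor sheaf cohomology by hypercompleteness (\cref{lemma:shvs hypercomplete}). Both sides actually vanish in positive degrees: $S$ is projective, and $\Shv(S)$ has homotopy dimension $\le 0$. The main obstacle is this last bookkeeping step, namely identifying the constant objects and the two notions of cohomology rigorously. I expect the adjunction argument itself to be formal.
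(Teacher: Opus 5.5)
Your proposal is correct and is essentially the paper's argument. The paper notes that cohomology depends only on the shape and invokes \cref{corollary: same shape}, which rests on the sliced $j^*$ admitting the further left adjoint $j_!$; you unfold that shape comparison directly at the level of the coefficients $K(\mathbb{Z},n)$, making explicit that $j_!$ preserves the terminal object, which is exactly what the argument needs (your appeal to hypercompleteness in the final bookkeeping step is harmless but unnecessary, since $K(\mathbb{Z},n)$ is truncated).
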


\begin{proof}
	The topos-theoretic cohomology only depends on the shape, see e.g.~\cite[Construction 1.3.1]{mair2025thesis}. Thus, we are done by \cref{corollary: same shape}.
\end{proof}

\begin{rem}
    Using a variety of more specialized methods, \cref{corollary: cohomology} is also known for compact Hausdorff spaces \cite[Theorem 3.2]{scholze2019condensed}, paracompact compactly generated or locally contractible spaces \cite[Proposition 1.3.5]{mair2025thesis}, or locally compact Hausdorff spaces \cite[Corollary 4.12]{haine2022condensed}.
\end{rem}

\noindent Following \cite[Proposition 2.8, Exercise 1]{clausenscholze2022condensed}, compact Hausdorff spaces are compact in $\Cond(\Set)$. An $\infty$-categorical lift of this result for extremally disconnected spaces can be seen directly by a Yoneda-type argument after checking that filtered colimits in $\Cond(\An)$ are already computed in $\Fun(\ExtrDisc^{\op}, \An)$ using \cref{lemma: condensed anima is finite product preserving functor}. Nonetheless, the fractured structure provides an alternative general method of attack.

\begin{cor} \label{corollary: compact}
	Extremally disconnected spaces are compact in $\Cond(\An)$.
\end{cor}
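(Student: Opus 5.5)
We want to show that for $S \in \ExtrDisc$ the functor $\Map_{\Cond(\An)}(\yo S, -)$ preserves filtered colimits. The plan is to route this through the fractured structure of \cref{thm:main}.

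First we identify $\yo S$ with $j_! \yo_{\open} S$. The functor $j_!$ is the left Kan extension of the restriction $\ExtrDisc^{\open} \to \ExtrDisc \to \Cond(\An)$, followed by sheafification. Hence $j_!$ sends the sheafified representable $\yo_{\open} S$ to the representable $\yo S$, which is already a sheaf by \cref{ex:subcanonical}. By adjunction,
\[ \Map_{\Cond(\An)}(\yo S, F) \simeq \Map_{\Cond^{\open}(\An)}(\yo_{\open} S, j^* F). \]
By \cref{dfn: fractured topos}(iii), $j^*$ preserves all small colimits, in particular filtered ones. So it suffices to show that $\yo_{\open} S$ is compact in $\Cond^{\open}(\An)$.

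For that step we use the slice description of \cref{prop:condopen over yoS}. Mapping out of $\yo_{\open} S$ into $G$ is the same as taking global sections of $G \times \yo_{\open} S$ in the slice. Concretely,
\[ \Map(\yo_{\open} S, G) \simeq \Gamma\bigl(\Cond^{\open}(\An)_{/\yo_{\open} S}, \; G \times \yo_{\open} S\bigr). \]
The pullback functor $- \times \yo_{\open} S$ preserves colimits, since colimits in an $\infty$-topos are universal. Under $\eta_S$, the slice becomes $\Shv(S) \simeq \Shv(\Clopen_S, \tau^{\mathrm{fin}})$. Global sections is evaluation at the terminal clopen $S$. The key point is that filtered colimits in $\Shv(\Clopen_S, \tau^{\mathrm{fin}})$ are computed objectwise. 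The sheaf condition for this finitary topology only involves finite covers by clopens, and any such cover refines to a finite disjoint clopen decomposition. So the sheaf condition amounts to finite product conditions (plus the empty cover giving $F(\emptyset) \simeq *$), and these commute with filtered colimits in $\An$. Hence presheaf-level filtered colimits are already sheaves, evaluation at $S$ commutes with them, and $\yo_{\open} S$ is compact.

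The main obstacle is the finitary-topology step. One must check that sheaves for $\tau^{\mathrm{fin}}$ on $\Clopen_S$ are exactly the presheaves sending finite disjoint clopen unions to products. This follows because any finite clopen cover $\{U_i\}$ refines to the disjoint atoms of the Boolean algebra it generates, and the Čech nerve of a disjoint cover is degenerate. The remaining identifications, $j_!\yo_{\open} S \simeq \yo S$ and the slice adjunction, are routine bookkeeping.
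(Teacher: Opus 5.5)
Your proposal is correct and follows the paper's route. You identify $\Cond^{\open}(\An)_{/\yo_{\open} S}$ with $\Shv(S)$ via \cref{prop:condopen over yoS}, use that the terminal object there is compact, and transfer this to $\Cond(\An)$ through $\Map(\yo S,-)\simeq \Map(\yo_{\open} S, j^*(-))$ with $j^*$ colimit-preserving; this is exactly the argument of Clough's Theorem 2.15 that the paper invokes. The only difference is that you prove compactness of the terminal object of $\Shv(S)$ by hand, using the finitary site $(\Clopen_S,\tau^{\mathrm{fin}})$ on which filtered colimits are computed objectwise, whereas the paper cites Lurie's general result for compact Hausdorff spaces (HTT, Remark 7.3.1.5 and Corollary 7.3.4.11).
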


\begin{proof}
	After noting $\Shv(S) \simeq \Cond^{\open}(\An)_{/\yo_{\open} S}$ by \cref{prop:condopen over yoS} and that $\yo_{\open} S$ is compact therein by \cite[Remark 7.3.1.5, Corollary 7.3.4.11]{lurie2009htt}, we can run the same proof as in \cite[Theorem 2.15]{clough2024diff1}.
\end{proof}

\section{Further candidates for fractured structures}
\noindent In \cref{sec:fractured} we observed a fractured structure via extremally disconnected spaces and open embeddings. However, condensed objects can be realized via different sites, and each comes with different notions of ``inclusions". So, this leaves us with the question whether there are alternative options.

\medskip \noindent In this section we observe that further potential candidates will in fact not satisfy the desired conditions. Interestingly, this requires us to venture into technical aspects of point-set topology.

\subsection{Compact Hausdorff spaces, profinite sets and (open) embeddings}
A natural enlargement of $\ExtrDisc^{\open}$ is achieved by starting with a larger collection of objects, i.e.,~by starting with one of the sites $\CHaus$ or $\ProFin$, and by attempting to equip these with a geometric site structure. Recall that
\[ \Cond(\An) = \Shv(\ExtrDisc) \simeq \Shv(\ProFin)^{\mathrm{hyp}} \simeq \Shv(\CHaus)^{\mathrm{hyp}}, \]
so we furthermore have to be careful with hypercompletions here.

\begin{rem}\label{rem:hyp truncated}
   While it is generally difficult to determine whether an arbitrary sheaf on the categories $\ProFin$ resp.~$\CHaus$ is hypercomplete, following \cite[Lemma 6.5.2.9]{lurie2009htt} every truncated sheaf is hypercomplete.
\end{rem}

\noindent The first step lies in constructing admissibility structures. Consider the wide subcategories
\[ \CHaus^{\inj}, \CHaus^{\open} \subset \CHaus \quad \text{and} \quad \ProFin^{\inj}, \ProFin^{\open} \subset \ProFin \]
spanned by injective maps resp.~open embeddings. These can be endowed with the Grothendieck topology $\tau_{\inj}$ resp.~$\tau_{\open}$ with coverings given by finitely jointly surjective families of injective maps resp.~open embeddings. A similar (but less involved) argument as in \cref{thm:main} yields:

\begin{lemma}
	The wide subcategories
	\[ \CHaus^{\inj}, \CHaus^{\open} \subset \CHaus \quad \text{and} \quad \ProFin^{\inj}, \ProFin^{\open} \subset \ProFin \]
	each define admissibility structures.
\end{lemma}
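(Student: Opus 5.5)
We verify conditions (a)--(d) of \cref{definition: admissibility}(i) for each of the four wide subcategories. The main simplification compared to \cref{thm:main} is that $\CHaus$ and $\ProFin$ have all finite limits, computed as in topological spaces, so pullbacks exist automatically. We need only check that the relevant class of maps is preserved.

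Condition (a) is immediate, since homeomorphisms are injective open embeddings. For (b), let $f \colon U \to X$ be admissible and $g \colon X' \to X$ arbitrary. The pullback $U \times_X X' = \{(u,x') : f(u) = g(x')\}$ is a closed subspace of $U \times X'$. It is therefore compact Hausdorff, and profinite if $U, X'$ are, since closed subspaces of profinite sets are profinite. If $f$ is injective, then $f'$ is injective because $f'(u,x') = x'$ determines $u$. If $f$ is an open embedding, then $f'$ is identified with the inclusion of $g^{-1}(f(U))$, which is open in $X'$.

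For (c), take $h = g \circ f$ with $g$ admissible. For injections both directions are elementary: if $g$ is injective, then $f$ is injective if and only if $g\circ f$ is. For open embeddings, compositions of open embeddings are open embeddings. Conversely, suppose $g$ and $h$ are open embeddings. Then $f$ is injective since $h$ is, and for $V \subseteq X$ open we have $f(V) = g^{-1}(h(V))$. This is open, because $g$ is injective, so $g^{-1}(h(V)) = \{y : g(y) \in h(V)\}$ and $g(f(V)) = h(V)$. Hence $f$ is an open embedding.

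For (d), closure under retracts for injections follows since $g$ is a retract of $f$ via $i_1, i_2$: from $i_2 g = f i_1$ with $f$ and $i_1$ injective we get that $g$ is injective. For open embeddings we reuse \cref{lemma:retract} verbatim. Its proof only used that pullbacks are computed as intersections, together with the bijection-implies-homeomorphism argument of \cite[Theorem 26.6]{munkres2000topology}, and both hold in $\CHaus$ and $\ProFin$. The retract square is then a pullback of an open embedding, so (b) applies.

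The only step requiring any care is (c) for open embeddings, and (b) in the profinite case, where one must confirm the pullback stays profinite. Both are routine. Compatibility with the topology is not part of this lemma, so it is not addressed here.
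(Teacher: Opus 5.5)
Your proof is correct and follows the route the paper intends: the paper only says that a similar argument to \cref{thm:main} applies (equivalences, pullback stability, the triangle condition, and \cref{lemma:retract}), and you have written out these four checks for injections and open embeddings in $\CHaus$ and $\ProFin$. One small point on wording in (c): the set $g^{-1}(h(V))$ is open because $g$ is continuous and $h(V)$ is open, while injectivity of $g$ is only what gives the equality $f(V) = g^{-1}(h(V))$.
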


\noindent The natural hope is that they refine to geometric sites with $\tau_{\inj}$ resp.~$\tau_{\open}$. The purpose of this subsection is to discuss that the admissibility structures are not compatible with $\tau_{\inj}$ resp.~$\tau_{\open}$.

\begin{dfn} \label{def: finite collection of jointly surjective local sections}
	Let $f \colon X \to Y$ be a map of compact Hausdorff spaces. It is said to \emph{admit a finite collection of jointly surjective local sections} if there exists a finite collection of closed\footnote{As we are looking at morphisms in $\CHaus$, we consider closed subspaces.} subspaces $Z_1, \cdots, Z_n \subseteq Y$ with maps $g_i \colon Z_i \to X$ such that $f \circ g_i = \id_{Z_i}$ for all $i$ and $\bigcup_{i=1}^n Z_i = Y$.
\end{dfn}

\noindent Recall from \cite[Theorem VI.9.1]{maclane1998categories} that the forgetful functor $U\colon\CHaus \to \Set$ admits a left adjoint given by Stone--\v{C}ech compactification $\beta$ which factors over the forgetful functor $\ExtrDisc \to \CHaus$. For more details regarding the topological space $\beta S$, see \cref{rem:stonecech}.

\medskip \noindent Now, we consider $\bN$ with the discrete topology and the inclusion into its one-point compactification $\bN \to \bN \cup \{\infty \}$. The adjunction yields a continuous map $e \colon \beta \bN \to \bN \cup \{ \infty \}$. Before we get to the main result, let us make the following technical observation regarding surjective maps. 

\begin{rem} \label{rem:e effective epi}
  Let $p\colon Y \to X$ be a surjective map in $\ProFin$. Then, $\yo(p)$ is an effective epimorphism in $\Shv(\ProFin)$. Indeed, the sieve generated by $p$ is a covering sieve. So, by \cite[Proposition 6.2.3.20]{lurie2009htt} in the particular case where $\sX = \Shv(\ProFin)$, $\sC = \ProFin$, $f = \yo\colon \ProFin \to \Shv(\ProFin)$, and the fact that $\yo$ is the image of the identity functor, $\yo(p)$ is an effective epimorphism.
\end{rem}

\noindent We now have the following observation about fractured structures. 

\begin{prop}\label{prop:betaN admits finite collection}
    Assume that any of the following inclusions of sites
    \[ \CHaus^{\inj}, \CHaus^{\open} \subset \CHaus \quad \text{and} \quad \ProFin^{\inj}, \ProFin^{\open} \subset \ProFin \] 
    induces a fractured structure via
    \[
    	\begin{tikzcd}
    		\Cond^{\inj}(\An) \coloneqq \Shv(\ProFin^{\inj}) \arrow[r] & \Shv(\ProFin) \arrow[r] & \Cond(\An)
    	\end{tikzcd}
    \]
    (and similarly the other variants). Then $e\colon \beta \bN \to \bN \cup \{ \infty \}$ admits a finite collection of jointly surjective local sections.
\end{prop}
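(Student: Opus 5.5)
The plan is to use the right adjoint $j^*$ of the assumed fractured structure to move the surjection $e$ into the petit topos $\Cond^{\inj}(\An) = \Shv(\ProFin^{\inj})$ (or one of the other variants). There, being an effective epimorphism unwinds to a local lifting condition along a $\tau_{\inj}$-covering (resp.\ $\tau_{\open}$-covering). That lifting condition is exactly a finite collection of jointly surjective local sections. I describe the $\ProFin^{\inj}$ case; the other three cases are identical. Both $\beta\bN$ and $\bN \cup \{\infty\}$ are profinite, and an $\tau_{\open}$-covering is in particular a covering by injections.

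\emph{Step 1: $\yo(e)$ is an effective epimorphism.} The map $e$ is surjective, because its image is compact and hence closed, and it contains the dense subset $\bN$. By \cref{rem:e effective epi}, $\yo(e)$ is therefore an effective epimorphism in $\Shv(\ProFin)$. Both $\yo(\beta\bN)$ and $\yo(\bN\cup\{\infty\})$ are $0$-truncated, so by \cref{rem:hyp truncated} they are hypercomplete and lie in $\Cond(\An)$. Hypersheafification preserves effective epimorphisms, since it is a left exact left adjoint. Hence $\yo(e)$ is also an effective epimorphism in $\Cond(\An)$.

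\emph{Step 2: apply $j^*$.} By \cref{dfn: fractured topos}(iii), $j^*$ preserves small colimits, and as a right adjoint it preserves pullbacks. It thus preserves Čech nerves and their geometric realizations, and so it preserves effective epimorphisms. Consequently $j^*\yo(e)$ is an effective epimorphism in $\Shv(\ProFin^{\inj})$.

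\emph{Step 3: compute $j^*\yo(X)$.} For $T \in \ProFin^{\inj}$, the functor $j_!$ is the left Kan extension of restriction, so it sends the representable $\yo_{\inj}T$ to $\yo T$. The latter is again $0$-truncated, hence hypercomplete, so no sheafification subtleties appear. Adjunction and Yoneda then give
\[ (j^*\yo X)(T) \simeq \Map(j_!\yo_{\inj}T, \yo X) \simeq \Map_{\ProFin}(T,X) \]
for profinite $X$. So $j^*\yo X$ is the set-valued sheaf $\Map_{\ProFin}(-,X)$ restricted along $\ProFin^{\inj} \subset \ProFin$, and $j^*\yo(e)$ is postcomposition with $e$. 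I expect this identification, including the compatibility with the hypercompletion step, to be the main technical point to verify.

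\emph{Step 4: unwind effective epimorphism.} A map of $0$-truncated sheaves is an effective epimorphism exactly when it is locally surjective. Apply this to the section $\id \in (j^*\yo(\bN\cup\{\infty\}))(\bN\cup\{\infty\})$. We obtain a $\tau_{\inj}$-covering, which we may take finite: injections $k_i \colon T_i \to \bN\cup\{\infty\}$, $i=1,\dots,n$, that are jointly surjective, together with maps $h_i \colon T_i \to \beta\bN$ satisfying $e\circ h_i = k_i$. Each $k_i$ is an injection from a compact space into a Hausdorff space, hence a homeomorphism onto its closed image $Z_i$. Setting $g_i = h_i \circ k_i^{-1} \colon Z_i \to \beta\bN$ gives $e\circ g_i = \id_{Z_i}$ and $\bigcup_i Z_i = \bN\cup\{\infty\}$. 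This is the required finite collection of jointly surjective local sections in the sense of \cref{def: finite collection of jointly surjective local sections}.
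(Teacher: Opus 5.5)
Your proposal is correct and follows the paper's proof: you push the effective epimorphism $\yo(e)$ through $j^*$ and unwind the resulting effective epimorphism in $\Shv(\ProFin^{\inj})$ into a finite, jointly surjective family of injections that lift to $\beta\bN$. The only difference is in the last step, where the paper pulls back along $\yo_{\inj}(\bN\cup\{\infty\}) \to \yo_{\inj,\mathrm{sh}}(\bN\cup\{\infty\})$, takes an image factorization and cites \cite[Lemma 6.2.2.16]{lurie2009htt} to get a covering sieve, whereas you identify $j^*\yo X$ directly as the restricted representable $\Map_{\ProFin}(-,X)$ and use that effective epimorphisms of $0$-truncated sheaves are locally surjective, which is slightly more direct and also spells out the passage to local sections $g_i = h_i\circ k_i^{-1}$.
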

        
\begin{proof}
  We will give the argument for $\ProFin$; the exact same argument works for $\CHaus$ by replacing $\ProFin$ with $\CHaus$ everywhere.
  
  \medskip \noindent Consider the map $e \colon \beta \bN \to \bN \cup \{\infty \}$ of profinite sets. As a map of compact Hausdorff spaces, its image is closed. It hits all of $\bN$ by construction, so it must also hit $\infty$. Hence, $e$ is surjective. Now consider the map $\yo(e)$. As the site is subcanonical, $\yo(e)$ is a map of sheaves. Moreover, $\yo(e)$ is a map of $0$-truncated sheaves, and so is also a map of hypersheaves, meaning it lives in $\Cond(\An)$ by \cref{rem:hyp truncated}. Finally, as $e$ is surjective, $\yo(e)$ is an effective epimorphism 
  in $\Cond(\An)$ by \cref{rem:e effective epi}.
  
  \medskip \noindent As $j^*$ preserves colimits and limits, it preserves effective epimorphisms, so 
  \[ j^*(\yo(e)) \colon j^*(\yo(\beta \bN)) \longrightarrow j^*(\yo(\bN \cup \{\infty\})) \] 
  is an effective epimorphism in $\Cond^{\mathrm{inj}}(\An)$.  Let us temporarily write $\yo_{\inj}$ for the Yoneda embedding on $\ProFin^{\inj}$ to distinguish it from the Yoneda embedding on $\ProFin$, and denote its sheafification by $\yo_{\inj,\mathrm{sh}}$\footnote{Analogous to \cref{rem:extrdisc not subcanonical} this presheaf need not be a sheaf, as the site on $\ProFin^{\inj}$ is not subcanonical.}. We consider the following pullback square of presheaves on $\ProFin^{\mathrm{inj}}$:
  \[
    \begin{tikzcd}
      R \arrow[r, "l"] \arrow[d, "p", swap] \arrow[dr, phantom, very near start, "\lrcorner"] & P \arrow[r] \arrow[d, "q" description] \arrow[dr, phantom, very near start, "\lrcorner"] & j^*(\yo(\beta\bN)) \arrow[d, "j^*(\yo(e))"] \\ 
      \yo_{\inj}(\bN \cup \{\infty\}) \arrow[r, "i", swap] & \yo_{\inj,\mathrm{sh}}(\bN \cup \{\infty\}) \arrow[r] & j^*\yo(\bN \cup \{\infty\})
    \end{tikzcd}.
    \]
    Here the right hand square is in fact a pullback square of sheaves. By pullback stability of effective epimorphisms in $\Cond^{\mathrm{inj}}(\An)$, $q$ is an effective epimorphism of sheaves as well. Moreover, by construction, $i$ is a local equivalence of presheaves (i.e., becomes an equivalence after sheafification). By left exactness of sheafification \cite[Corollary 6.2.1.6, Proposition 6.2.2.7]{lurie2009htt}, $l$ is hence also a local equivalence of presheaves. This means that the sheafification of $p$ is the effective epimorphism $q$.
    \medskip \\Let us consider the image factorization of $p$ inside $\Fun((\ProFin^{\mathrm{inj}})^{\op},\An)$, and denote it by
    \[
    	\begin{tikzcd}
    		R \arrow[r, twoheadrightarrow] & S \arrow[r, hookrightarrow] & \yo_{\inj}(\bN \cup \{\infty \}).
    	\end{tikzcd}
    \] 
    Since $q$ is an effective epimorphism, the second map $S \hookrightarrow \yo_{\inj}(\bN \cup \{\infty\})$ becomes an equivalence after sheafification. This means that $S$ is a covering sieve of $\bN \cup \{\infty\}$, see \cite[Lemma 6.2.2.16]{lurie2009htt}.
    
    \medskip \noindent Unwinding the pullback square, $S \hookrightarrow \yo_{\inj}(\bN \cup \{\infty\})$ precisely consists of injections into $\bN \cup \{\infty\}$ that admit a lift to $\beta \bN$. By definition of the topology on $\ProFin^{\mathrm{inj}}$, this means there is a finite family of injections $k_\alpha \colon V_\alpha\hookrightarrow \bN \cup \{\infty\}$ that are jointly surjective and such that each $k_\alpha$ admits a lift to $\beta\bN$. This is precisely the condition that $e$ admits a finite collection of jointly surjective local sections.
\end{proof}

\begin{lemma} \label{lemma:no finite collection of jointly surjective local sections}
 The map $e \colon \beta \bN \to \bN \cup \{\infty\}$ does not admit a finite collection of jointly surjective local sections.
\end{lemma}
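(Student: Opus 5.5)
Suppose for contradiction that there are closed subspaces $Z_1, \dots, Z_n \subseteq \bN \cup \{\infty\}$ covering $\bN \cup \{\infty\}$, together with continuous maps $g_i \colon Z_i \to \beta\bN$ satisfying $e \circ g_i = \id_{Z_i}$. The plan is to single out a section whose domain contains a nontrivial convergent sequence. Since $\infty$ lies in some $Z_i$, say $Z_1$, I first need $Z_1$ to contain infinitely many natural numbers. I would get this from the pigeonhole principle: $\bN$ is covered by finitely many $Z_i$, so some $Z_i$ contains infinitely many $n \in \bN$. That $Z_i$ is closed, and the only accumulation point of an infinite subset of $\bN$ in $\bN \cup \{\infty\}$ is $\infty$, so $\infty \in Z_i$. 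Relabelling, $Z_1$ contains an infinite set $A \subseteq \bN$ together with $\infty$.

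Next I would pin down $g_1$ on $A$. The map $e$ restricts to the identity on $\bN \subseteq \beta\bN$. I also need the fibre $e^{-1}(n)$ to be $\{n\}$. Since $\{n\}$ is clopen in $\bN \cup \{\infty\}$, its preimage $e^{-1}(n)$ is a clopen subset of $\beta\bN$. Its intersection with the dense subset $\bN$ is $\{n\}$, so $e^{-1}(n) = \overline{\{n\}} = \{n\}$. Consequently $g_1(a) = a$ for every $a \in A$.

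Now enumerate $A$ as $a_1 < a_2 < \cdots$. In $Z_1$ we have $a_k \to \infty$, so by continuity $g_1(a_k) = a_k \to g_1(\infty)$ in $\beta\bN$. This yields a nontrivial convergent sequence in $\beta\bN$: the terms are pairwise distinct, so it is not eventually constant. That contradicts \cref{remark: ExtrDisc categorically bad}, which says convergent sequences in extremally disconnected spaces are eventually constant, since $\beta\bN$ is extremally disconnected.

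The main obstacle is this final step, which I would make self-contained rather than rely only on the cited result. Split $A$ into two disjoint infinite sets $A_{\mathrm{even}} = \{a_{2k}\}$ and $A_{\mathrm{odd}} = \{a_{2k+1}\}$. Their closures in $\beta\bN$ are disjoint clopen sets, because any map of sets $\bN \to \{0,1\}$ extends continuously to $\beta\bN$. However, both subsequences converge to $g_1(\infty)$, so that point lies in both closures, which is a contradiction.
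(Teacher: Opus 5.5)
Your proof is correct and follows essentially the same route as the paper: some $Z_i$ must be infinite, so it contains $\infty$, and the section $g_i$ then produces a convergent sequence of pairwise distinct points in $\beta\bN$, which is impossible. Your extra steps are valid but only make that argument self-contained. These are the computation $e^{-1}(n)=\{n\}$ giving $g_1(a)=a$, where the paper simply uses that a section is injective and hence a closed embedding, and the even/odd clopen-separation argument in place of the citation that convergent sequences in extremally disconnected spaces are eventually constant.
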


\begin{proof} 
    Suppose that a finite collection of jointly surjective local sections exists, i.e.,~closed subspaces $Z_1, \cdots, Z_n \subseteq \bN \cup \{\infty \}$ with maps $g_i \colon Z_i \to \beta \bN$ satisfying the properties in \cref{def: finite collection of jointly surjective local sections}. Suppose that $Z_i$ is an infinite set for some $i$. In particular, it must contain $\infty$, since it contains a sequence of strictly increasing natural numbers which converges to $\infty$. By assumption, $g_i \colon Z_i \to \beta \bN$ is a closed embedding of compact Hausdorff spaces. Thus, $\beta \bN$ must also have a convergent sequence of pairwise distinct elements. However, this is not possible, since $\beta \bN$ is extremally disconnected (\cref{remark: ExtrDisc categorically bad}). 
\end{proof}

\noindent Combining \cref{prop:betaN admits finite collection} and \cref{lemma:no finite collection of jointly surjective local sections} we obtain the following result.

\begin{cor} 
    The admissibility structures \label{corollary: no fractured structure CHaus ProFin}
    \[ \CHaus^{\inj}, \CHaus^{\open} \subset \CHaus \quad \text{and} \quad \ProFin^{\inj}, \ProFin^{\open} \subset \ProFin \]
    do not induce a fractured structure on $\Cond(\An)$.
\end{cor}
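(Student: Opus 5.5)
The corollary should follow by contradiction, directly from the two preceding results. Suppose one of the four admissibility structures, say $\ProFin^{\inj} \subset \ProFin$, induced a fractured structure on $\Cond(\An)$. Concretely, suppose the composite $\Shv(\ProFin^{\inj}) \to \Shv(\ProFin) \to \Cond(\An)$ were an equivalence onto a fracture subcategory. Then we are exactly in the hypotheses of \cref{prop:betaN admits finite collection}. That proposition says $e \colon \beta\bN \to \bN \cup \{\infty\}$ admits a finite collection of jointly surjective local sections. This contradicts \cref{lemma:no finite collection of jointly surjective local sections}. The same argument applies verbatim to $\ProFin^{\open}$, $\CHaus^{\inj}$ and $\CHaus^{\open}$, because \cref{prop:betaN admits finite collection} is stated for all four variants.

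Before concluding, I would check that the proposition really covers all four cases, in particular the open-embedding ones. There the covering sieve produced in the proof consists of open embeddings $V_\alpha \hookrightarrow \bN \cup \{\infty\}$ that lift to $\beta\bN$. Open embeddings between compact Hausdorff spaces have clopen, hence closed, image. So these still give closed subspaces $Z_\alpha$ with sections, as required by \cref{def: finite collection of jointly surjective local sections}.

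For the $\CHaus$ variants, I would also confirm that the input used in the proposition's proof still holds. The needed facts are:
\begin{itemize}[leftmargin=*]
\item $\yo(e)$ is an effective epimorphism: the analogue of \cref{rem:e effective epi} over $\CHaus$ holds by the same citation.
\item $\yo(e)$ lives in $\Cond(\An)$: it is $0$-truncated, hence hypercomplete by \cref{rem:hyp truncated}.
\end{itemize}

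The real work has already been done in the two cited results. The one point worth flagging is the interpretation of ``induce a fractured structure''. It should mean that the functor from \cref{theorem: geometric site}, composed with the hypercompletion equivalence, lands in a fracture subcategory with the properties $j^*$ preserves colimits and limits. That is exactly what the proposition used, so the corollary follows.
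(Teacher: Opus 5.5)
Your proposal is correct and is the paper's argument: the corollary follows by assuming one of the four structures induces a fractured structure, applying \cref{prop:betaN admits finite collection} to get a finite collection of jointly surjective local sections of $e$, and contradicting \cref{lemma:no finite collection of jointly surjective local sections}. Your extra checks are sound and make explicit what the paper leaves implicit: injections and open embeddings between compact Hausdorff spaces have closed image, and the effective-epimorphism and truncation inputs have $\CHaus$ analogues.
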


\begin{rem}
	The argument furthermore shows that the suggested constructions do not yield fractured structures on $\Shv(\ProFin)$ and $\Shv(\CHaus)$; in fact the associated right adjoints do not preserve effective epimorphisms. This is the only feature about fractured structures that we have used in the proof of \cref{prop:betaN admits finite collection}.
\end{rem}

\begin{rem}
	The surjective map
	\[ [0,1]^{\bN} \to (S^1)^{\bN}, \ (x_0, x_1, \cdots) \mapsto \left(e^{2\pi i x_0}, e^{2\pi i x_1}, \cdots \right) \]
	also does not admit a finite collection of jointly surjective local sections. This would have been enough to discuss only the $\CHaus$ case. 
\end{rem}

\noindent The relevance of local sections was a motivating factor for us to consider extremally disconnected spaces, as those are the projective objects in $\CHaus$ (\cref{rem:extrdisc projective}).

\subsection{Extremally disconnected spaces and embeddings} \label{subsection: ExtrDisc counterexample}
Staying with the site of extremally disconnected spaces, we can ask whether we can relax the condition of open embeddings to embeddings. This is a natural question because the latter are more general and still have good categorical properties. However, we will see that this also fails. Concretely, we want to see that pullbacks along injections need not exist in $\ExtrDisc$, by proving the following conjecture, suggested by Dustin Clausen.

\begin{conj}[Clausen] \label{conj:no fibers in extrdisc}
    The category $\ExtrDisc$ does not admit all fibers.
\end{conj}

\noindent Before we can state the main result, we establish some notation and recall some facts about the Stone--\v{C}ech compactification that will be relevant throughout. A \emph{filter} on a set $S$ is a non-empty collection of subsets of $S$ which is closed under finite intersections and supersets. An \emph{ultrafilter} on $S$ is a filter, such that for all $A \subseteq S$, it either contains $A$ or $S \setminus A$. For a given element $s \in S$, the collection $\{A \subseteq S\colon s \in A\}$ is an example of an ultrafilter, called a \emph{principal ultrafilter}. We only review the following result about ultrafilters.

\begin{lemma}[Tarski's Theorem {\cite[Theorem 7.5]{hrbacekjech2000introduction}}] \label{lemma:non-principal ultrafilters}
    Let $S$ be a set. Then every filter on $S$ can be extended to an ultrafilter. Moreover, the resulting ultrafilter is non-principal if and only if it contains all cofinite subsets. 
\end{lemma}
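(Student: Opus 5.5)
The plan is to prove the two assertions separately: the extension statement by Zorn's lemma, and the characterization of non-principal ultrafilters by looking at which singletons and co-singletons lie in the ultrafilter. Throughout I take filters to be \emph{proper}, i.e.\ $\emptyset$ is not a member. This is implicit in the statement, since otherwise the power set would be a degenerate ``ultrafilter''.

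For the first part, fix a filter $F$ on $S$ and consider the poset $P$ of proper filters on $S$ containing $F$, ordered by inclusion. It is non-empty because $F \in P$. The union of a non-empty chain in $P$ is again a filter: closure under supersets is immediate, and two members of the union lie in a common filter of the chain, so their intersection does too. It is proper because no member of the chain contains $\emptyset$. By Zorn's lemma there is a maximal element $U \in P$.

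The main step is to show that a maximal $U$ is an ultrafilter. Suppose $A \subseteq S$ with $A \notin U$. I first claim that some $B \in U$ satisfies $B \cap A = \emptyset$. If not, the family $\{C \subseteq S : C \supseteq B \cap A \text{ for some } B \in U\}$ is a proper filter strictly containing $U$ (it contains $A$), which contradicts maximality. Given such a $B$, we have $B \subseteq S \setminus A$, and closure under supersets gives $S \setminus A \in U$.

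For the second part I use properness throughout. First, a principal ultrafilter at $s$ does not contain the cofinite set $S \setminus \{s\}$. Conversely, let $U$ be a non-principal ultrafilter and let $s \in S$. If $\{s\} \in U$, then every $A \in U$ meets $\{s\}$ by properness, so $s \in A$. Hence $U$ is contained in the principal ultrafilter at $s$, and maximality of ultrafilters gives equality, a contradiction. So $\{s\} \notin U$, and therefore $S \setminus \{s\} \in U$. Every cofinite set is a finite intersection of such sets, so it lies in $U$.

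None of these steps looks hard. The only points needing care are keeping track of properness and invoking the correct form of maximality when identifying $U$ with a principal ultrafilter. The finite case is consistent with the statement: for finite $S$ the set $\emptyset$ is cofinite, so no non-principal ultrafilters exist.
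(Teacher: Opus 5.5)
Your proof is correct and follows the standard argument behind the result, which the paper only cites from Hrbacek--Jech without proving: Zorn's lemma on proper filters containing the given one, maximality forcing $A$ or $S \setminus A$ into the filter, and an analysis of singletons and their complements for the non-principal characterization. Your convention that filters are proper is the right one, since the paper's definition of a filter omits the condition $\emptyset \notin$ filter, and the paper only ever applies the lemma to families with the finite intersection property.
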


\noindent Let us recall some detailed aspects of the Stone--\v{C}ech compactification $\beta S$ of a set $S$.

\begin{rem} \label{rem:stonecech}
    To each set $S$, we can associate the \emph{Stone--\v{C}ech compactification} $\beta S$. The underlying set of $\beta S$ is the set of ultrafilters, and the topology is generated by the sets $A^* \coloneqq \{u \in \beta S : A \in u\}$ for $A \subseteq S$. Furthermore, there is a (continuous) map $S \to \beta S$, sending $s$ to the principal ultrafilter $u_s\coloneqq\{A \subseteq S : s \in A\}$.
\end{rem}

\noindent We now have the following main result.

\begin{thm} \label{thm:no fibers}
 Let $p$ be a non-principal ultrafilter on $\bN$. Let $\pi_1 \colon \bN \times \bN \to \bN$ denote the projection to the first factor. Then, the map $\beta \pi_1 \colon \beta(\bN \times \bN) \to \beta \bN$ does not admit a fiber over $p$ in $\ExtrDisc$.
\end{thm}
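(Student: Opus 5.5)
By \cref{lemma:limit in extrdisc}, if the fiber of $\beta\pi_1$ over $p$ exists in $\ExtrDisc$, it agrees with the fiber computed in $\CHaus$. That fiber is the closed subspace
\[ F \coloneqq (\beta\pi_1)^{-1}(p) = \{ u \in \beta(\bN\times\bN) : \pi_1(A) \in p \text{ for all } A \in u \} \subseteq \beta(\bN\times\bN). \]
Here we use the description of $\beta\pi_1$ on ultrafilters: $\beta\pi_1(u) = \{ B \subseteq \bN : \pi_1^{-1}(B) \in u\}$. Equivalently, $u\in F$ if and only if $\pi_1^{-1}(B)\in u$ for all $B\in p$. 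So it suffices to show that $F$ is not extremally disconnected. To do this, I will exhibit an open subset of $F$ whose closure in $F$ is not open.

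The natural candidate comes from splitting the second coordinate. Consider a partition of $\bN\times\bN$ into two sets, chosen so that each meets every column $\{n\}\times\bN$ in an infinite set but the two sets are intertwined across columns. For instance, take
\[ A = \{(n,m) : m < f(n)\} \]
for some unbounded function $f$, or a family of such sets indexed cleverly. I then look at the open set
\[ U = \bigcup_k (A_k^* \cap F) \]
for an increasing family $A_k = \{(n,m): m\le k\}$. Its closure in $F$ should be contained in the closed set $\{u \in F : \text{some } A_k \in u\}$, but the closure cannot be clopen. The basic clopens of $F$ are of the form $C^*\cap F$ with $C\subseteq \bN\times\bN$. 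If $\overline U = C^*\cap F$, then every $u\in F$ containing $C$ must be in the closure of $U$, and every $u\in F$ containing all complements of the $A_k$ must avoid $C$. By a filter argument using Tarski's theorem (\cref{lemma:non-principal ultrafilters}), I extend the filter generated by $\{\pi_1^{-1}(B): B\in p\}$ together with suitable sets to ultrafilters. From this I derive a contradiction: from $C$, define $g(n) = $ the least $m$ such that $(n,m)\notin$ the relevant part of $C$. Then use the fact that $p$ is non-principal, so $p$ contains no finite set, to find an ultrafilter in $F$ containing $C$ and the set $\{(n,m): m > g(n)\}$, or its complement variant. Such an ultrafilter contains $C$, hence lies in $\overline U$, yet it has a neighbourhood disjoint from $U$.

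The main obstacle is choosing $U$ correctly. With the union of the $A_k^*$ above, the closure might actually be clopen; this is the case when $p$ is a P-point. A more robust choice is a union of two disjoint open sets whose closures meet, since in an extremally disconnected space disjoint open sets have disjoint closures. I would instead take
\[ V_1 = \bigcup_k \bigl(A_k^*\cap F\bigr) \quad\text{and}\quad V_2 = \bigcup_k \bigl(D_k^* \cap F\bigr), \]
where $D_k$ uses a diagonal-type slicing, for example $D_k=\{(n,m): n\le m \le n+k\}$ shifted off the $A_k$'s. I arrange $V_1$ and $V_2$ to be disjoint. The goal is then to construct, via Tarski's theorem, an ultrafilter $u\in F$ in the closure of both. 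This requires showing that every $C\in u$ meets some $A_k$ and some $D_k$ in a $p$-large set of columns. Verifying the finite intersection property for the filter generated by $\pi_1^{-1}(B)$ for $B\in p$, together with the complements of the finitely many bounded sets and the requirement to meet both families, is the step where non-principality of $p$ is essential and where I expect the real work to lie.
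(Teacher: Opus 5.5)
Your reduction matches the paper: by the lemma on limits in $\ExtrDisc$ it suffices to show that $F=(\beta\pi_1)^{-1}(p)$ is not extremally disconnected. Your first candidate $U=\bigcup_k(A_k^*\cap F)$ is exactly the paper's $U_p=\bigcup_k C_k$, since $A_k^*\cap F=\bigcup_{j\le k}C_j$. The genuine error is that you abandon it, claiming its closure may be clopen when $p$ is a P-point. This is false. $\overline U$ fails to be open for \emph{every} non-principal $p$, and that is exactly what the paper proves.

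The missing idea is how to defeat an arbitrary clopen candidate $F\cap J^*\supseteq U$; every clopen subset of $F$ has this form. Such a $J$ has $\Sl_k(J)\in p$ for all $k$. Put
\[ h(n)=\max\{k\le n : (n,0),\dots,(n,k)\in J\}\qquad (n\in\Sl_0(J)) \]
and let $R\subseteq J$ be the graph of $h$.
\begin{itemize}
\item Since $\{n : h(n)\ge k\}=T_k\cap\bigcap_{j\le k}\Sl_j(J)\in p$, with $T_k=\{n : n\ge k\}\in p$ by non-principality, every slice $\Sl_k(R)=h^{-1}(k)$ lies outside $p$. On the other hand $\pi_1(R)=\Sl_0(J)\in p$.
\item Hence $\{\pi_1^{-1}(A):A\in p\}\cup\{R\}$ has the finite intersection property and yields some $u\in F\cap R^*\subseteq F\cap J^*$.
\item The open set $F\cap R^*$ misses $U$. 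Indeed, a point of it containing $\bN\times\{k\}$ would contain $\Sl_k(R)\times\{k\}$, forcing $\Sl_k(R)\in p$ by your own description of $F$.
\end{itemize}
So $F\cap J^*\neq\overline U$ for every such $J$, and $\overline U$ is not open.

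Your sketch with $g$ gestures at this but does not work as written. For $C=\{(n,m):m<n\}$ one has $g(n)=n$, and $\{(n,m):m>g(n)\}$ is disjoint from $C$. Also, the claim that every $u\in F$ containing all complements of the $A_k$ must avoid $C$ does not follow. The nonempty set $\overline U\setminus U$ consists of exactly such points, and under your hypothesis they would lie in $C^*$.

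The replacement strategy is legitimate in principle, since in an extremally disconnected space disjoint open sets have disjoint closures. But your concrete choice fails, and the decisive step is left undone. Take $D_k=\{(n,m):n\le m\le n+k\}$.
\begin{itemize}
\item Every $u\in A_k^*\cap F$ contains $\{(n,m):n>k\}$, because $p$ is non-principal. Hence $u$ contains $A_k\cap\{(n,m):n>k\}\subseteq\{(n,m):m<n\}$.
\item Every $u\in D_k^*\cap F$ contains $\{(n,m):m\ge n\}$.
\end{itemize}
Thus $V_1$ and $V_2$ lie in complementary clopen subsets of $F$, so $\overline{V_1}\cap\overline{V_2}=\emptyset$, and the ultrafilter you hope to build does not exist. ``Shifted off the $A_k$'s'' does not specify a working alternative. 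The finite-intersection-property verification you defer as ``the real work'' is essentially the whole theorem. As written, the proposal does not establish that $F$ fails to be extremally disconnected.
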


\noindent The consequence is that injections on $\ExtrDisc$ do not lend themselves to a fractured structure.

\begin{cor} \label{corollary: ExtrDiscinj not admissibility}
	The injections on $\ExtrDisc$ do not form an admissibility structure.
\end{cor}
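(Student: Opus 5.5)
The plan is to argue by contradiction: assume injections form an admissibility structure and contradict \cref{thm:no fibers}. Axiom (b) of \cref{definition: admissibility} demands that the pullback of an admissible map along an arbitrary map exists in $\ExtrDisc$. So it suffices to exhibit an injection in $\ExtrDisc$ whose pullback along some map fails to exist.

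The natural candidate is the inclusion of the point $p$. Take $\{*\} \to \beta\bN$ picking out the non-principal ultrafilter $p$. This is an injective continuous map, and $\{*\}$ is a one-point space, hence extremally disconnected, so it is a morphism of $\ExtrDisc$ and belongs to the putative class of admissible morphisms. Axiom (b) would then require the pullback of $\{*\} \to \beta\bN$ along $\beta\pi_1 \colon \beta(\bN\times\bN) \to \beta\bN$ to exist in $\ExtrDisc$.

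This pullback is by definition the fiber of $\beta\pi_1$ over $p$. Here I would briefly note that $\beta(\bN\times\bN)$ and $\beta\bN$ are extremally disconnected, being Stone--\v{C}ech compactifications of discrete sets, so the cospan really lives in $\ExtrDisc$. By \cref{thm:no fibers}, the fiber does not exist in $\ExtrDisc$, which is the desired contradiction.

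There is no real obstacle, since all the work sits in \cref{thm:no fibers}. The only point needing care is that ``admissibility structure'' is understood relative to $\ExtrDisc$ as the ambient category. Consequently, existence of the pullback means existence in $\ExtrDisc$, not in $\CHaus$. By \cref{lemma:limit in extrdisc} this is the same as asking that the $\CHaus$-fiber be extremally disconnected, which is precisely what \cref{thm:no fibers} rules out.
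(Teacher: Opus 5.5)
Your proposal is correct and is essentially the paper's argument, which the paper leaves implicit: the point inclusion $p\colon \{*\}\to\beta\bN$ is an injection in $\ExtrDisc$, so axiom (b) of \cref{definition: admissibility} would force the fiber of $\beta\pi_1$ over $p$ to exist in $\ExtrDisc$, contradicting \cref{thm:no fibers}. Your care about where the pullback must exist is the right point to stress: it must exist in $\ExtrDisc$, which by \cref{lemma:limit in extrdisc} and fullness of $\ExtrDisc\subset\CHaus$ amounts to the $\CHaus$-fiber being extremally disconnected.
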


\noindent The proof requires further delving into the technicalities of the Stone--\v{C}ech compactification. Let us recall the following lemma. See \cite[Theorem 3.18(b)]{hindmanstrauss2012stonecech} for further details.

\begin{lemma} \label{lemma:clopen}
 Let $S$ be a set. Then a subset of $\beta S$ is clopen if and only if it is of the form $A^*$, for some $A \subseteq S$.
\end{lemma}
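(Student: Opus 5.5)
Both directions follow directly from the description of the topology in \cref{rem:stonecech}.

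\emph{The sets $A^*$ are clopen.} By definition $A^*$ is a basic open set. Since every $u \in \beta S$ is an ultrafilter, it contains exactly one of $A$ and $S \setminus A$. Hence
\[ \beta S \setminus A^* = (S \setminus A)^*, \]
which is also open, so $A^*$ is clopen.

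\emph{Every clopen set is of this form.} First record two identities:
\[ A^* \cap B^* = (A \cap B)^* \quad \text{and} \quad A^* \cup B^* = (A \cup B)^* . \]
The first holds because ultrafilters are closed under finite intersections and supersets. For the second, if $A \cup B \in u$ but $A \notin u$, then $S \setminus A \in u$, so $(A \cup B) \cap (S \setminus A) \subseteq B$ lies in $u$, giving $B \in u$.

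Now let $C \subseteq \beta S$ be clopen. Since $C$ is open and the sets $A^*$ form a basis (they are closed under finite intersections), $C = \bigcup_{i \in I} A_i^*$ for some family of subsets $A_i \subseteq S$. Since $C$ is closed in the compact space $\beta S$, it is compact. Compactness of $\beta S$ is standard; it can also be taken from its role as a compact Hausdorff space via the left adjoint $\beta$ in \cref{lemma:limit in extrdisc}. By compactness, finitely many indices $i_1, \dots, i_n$ suffice, so
\[ C = A_{i_1}^* \cup \dots \cup A_{i_n}^* = (A_{i_1} \cup \dots \cup A_{i_n})^* \]
by the union identity. Thus $C = A^*$ with $A = A_{i_1} \cup \dots \cup A_{i_n}$.

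The only nontrivial ingredient is the compactness of $\beta S$, which the paper already assumes by treating $\beta S$ as an object of $\CHaus$. Everything else is the elementary ultrafilter bookkeeping above.
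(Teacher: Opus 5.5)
Your proof is correct: the identity $\beta S\setminus A^*=(S\setminus A)^*$ gives clopenness, and a clopen set, being compact and a union of basic sets $A_i^*$, is a finite such union and hence of the form $A^*$ by your union identity. The paper gives no argument of its own (it only cites Hindman--Strauss), and yours is the standard one. If you prefer not to rely on \cref{rem:stonecech} for compactness of $\beta S$, compactness also follows directly from Tarski's theorem (\cref{lemma:non-principal ultrafilters}): basic closed sets $A_i^*$ with the finite intersection property come from sets $A_i$ that generate a proper filter, and any ultrafilter extending that filter lies in every $A_i^*$.
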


\noindent Let $p$ be an ultrafilter on $\bN$. We denote the pre-image (fiber) of $p$ via $\beta \pi_1\colon \beta(\bN \times \bN) \to \beta \bN$ by $F_p$. Note that for any $p$, $F_p$ is a compact Hausdorff space, and a closed subspace of $\beta(\bN \times \bN)$. We have the following lemma regarding $F_p$, which is an immediate unwinding of definitions.

\begin{lemma} \label{lemma:description of fp}
 Let $p$ be an ultrafilter on $\bN$ and $u$ an ultrafilter on $\bN \times \bN$. Then,
 \[
   u \in F_p \iff (\text{for all }A \subseteq \bN: A \in p \iff A \times \bN \in u).
 \]
\end{lemma}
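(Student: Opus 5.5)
The plan is to identify the map $\beta \pi_1$ explicitly on ultrafilters, after which the lemma is just equality of two ultrafilters written out. Using the description of $\beta S$ in \cref{rem:stonecech}, I will show that for any map of sets $f \colon S \to T$, the induced map $\beta f \colon \beta S \to \beta T$ sends an ultrafilter $u$ to the pushforward
\[ f_* u \coloneqq \{ A \subseteq T : f^{-1}(A) \in u \}. \]

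First, $f_* u$ is an ultrafilter. Preimages preserve finite intersections, supersets and complements, and $f^{-1}(T) = S \in u$, so the filter and ultrafilter axioms transfer directly from $u$. Second, $u \mapsto f_* u$ is continuous. The preimage of a basic open set $A^*$ is $\{u : f^{-1}(A) \in u\} = (f^{-1}(A))^*$, which is again basic open. Third, on principal ultrafilters we have $f_*(u_s) = u_{f(s)}$, since $f^{-1}(A) \ni s$ if and only if $A \ni f(s)$. Hence $u \mapsto f_* u$ is a continuous map $\beta S \to \beta T$ extending $S \xrightarrow{f} T \to \beta T$. The map $\beta f$ is the unique such extension, by the universal property of the adjunction $\beta \dashv U$ recalled before \cref{rem:e effective epi}. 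So $\beta f = f_*$. If one prefers to argue by density instead, the image of $S$ is dense in $\beta S$ and $\beta T$ is Hausdorff, so two continuous extensions agree everywhere.

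Applying this with $f = \pi_1 \colon \bN \times \bN \to \bN$ and using $\pi_1^{-1}(A) = A \times \bN$, we get
\[ \beta\pi_1(u) = \{ A \subseteq \bN : A \times \bN \in u\}. \]
By definition, $u \in F_p$ means $\beta\pi_1(u) = p$. As subsets of the power set of $\bN$, this equality says exactly that for every $A \subseteq \bN$ we have $A \in p$ if and only if $A \times \bN \in u$, which is the claimed characterization.

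I do not expect a real obstacle here. The only point requiring care is justifying that the abstractly defined $\beta\pi_1$ (the adjoint transpose) coincides with the pushforward formula. This is handled by the uniqueness of continuous extensions, either via the adjunction or via density and Hausdorffness as above.
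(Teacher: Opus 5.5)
Your proposal is correct and follows the paper's proof. The paper takes $\beta\pi_1(u) = \{A \subseteq \bN : A \times \bN \in u\}$ as given by definition and reads off the equality $\beta\pi_1(u) = p$. Your verification that the adjoint-transpose $\beta f$ equals the pushforward $f_*$, via uniqueness of continuous extensions, fills in the step the paper leaves implicit.
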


\begin{proof}
 By definition $u \in F_p$ if and only if $\beta \pi_1(u) = \{ A \subseteq \bN \mid  A \times \bN \in u \} = p$. This equality precisely states $A \in p$ if and only if $A \times \bN \in u$. 
\end{proof}

\noindent We now establish some notational conventions. Let $J \subseteq \bN \times \bN$ be a subset. The \emph{$k$-th slice} of $J$, denoted by $\Sl_k(J)$, is the subset $\{n \in \bN : (n,k) \in J\}$. Let $p$ be an ultrafilter on $\bN$ and $C_k = F_p \cap (\bN \times \{k\})^*$. Following \cref{lemma:clopen}, the $C_k$ are clopen subsets of $F_p$. They are related to the slices via the following result.

\begin{lemma} \label{lemma:slices}
 Let $p$ be an ultrafilter on $\bN$ and $J \subseteq \bN \times \bN$. For $k \in \bN$ we have $C_k \subseteq F_p \cap J^*$ if and only if $\Sl_k(J) \in p$.
\end{lemma}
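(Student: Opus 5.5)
We prove the two directions separately, working directly from the description of $F_p$ in \cref{lemma:description of fp} and the basic clopens $A^*$ from \cref{rem:stonecech}. Note that $C_k = \{u \in F_p : \bN \times \{k\} \in u\}$, and $C_k \subseteq J^*$ means: every $u \in F_p$ containing $\bN \times \{k\}$ also contains $J$. Since $J \cap (\bN\times\{k\}) = \Sl_k(J) \times \{k\}$, we may replace $J$ by $J_k \coloneqq \Sl_k(J)\times\{k\}$. The condition becomes: every $u \in F_p$ with $\bN\times\{k\} \in u$ contains $\Sl_k(J)\times\{k\}$.

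\emph{($\Leftarrow$)} Suppose $\Sl_k(J) \in p$, and let $u \in C_k$. By \cref{lemma:description of fp}, $\Sl_k(J)\times \bN \in u$. Also $\bN\times\{k\}\in u$. Hence the intersection $\Sl_k(J)\times\{k\} \subseteq J$ lies in $u$, so $J \in u$ and $u \in J^*$.

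\emph{($\Rightarrow$)} Suppose $\Sl_k(J)\notin p$. Then $B \coloneqq \bN\setminus \Sl_k(J) \in p$. We construct $u \in C_k$ with $J \notin u$. Let $\iota_k \colon \bN \to \bN\times\bN$, $n\mapsto (n,k)$, and set
\[ u \coloneqq \beta\iota_k(p) = \{X \subseteq \bN\times\bN : \{n : (n,k)\in X\} \in p\}. \]
This is an ultrafilter on $\bN\times\bN$: it is the pushforward of an ultrafilter, and one checks directly that it is upward closed, closed under finite intersections, and contains $X$ or its complement. It contains $\bN\times\{k\}$. Moreover $A\times\bN \in u$ if and only if $A\in p$, so $u\in F_p$ by \cref{lemma:description of fp}, and hence $u \in C_k$. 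Finally, the slice of $J$ at $k$ is $\Sl_k(J)\notin p$, so $J\notin u$. Thus $C_k \not\subseteq F_p\cap J^*$.

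No step is a serious obstacle. The only point needing care is writing down the witness ultrafilter $u = \beta\iota_k(p)$ in the converse direction and checking that it lies in $F_p$; both are routine unwindings of the definitions.
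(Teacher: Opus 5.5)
Your proof is correct. The direction $(\Leftarrow)$ is exactly the paper's argument.

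For $(\Rightarrow)$ the paper also argues by contraposition, but it obtains its witness non-constructively. It takes the family $\{A\times\bN : A\in p\}\cup\{\bN\times\{k\},\, B\times\{k\}\}$, checks the finite intersection property, and extends it to an ultrafilter $u_\cF$ via Tarski's theorem. You instead write down the pushforward $\beta\iota_k(p)$ and read off every required membership from the identities $\{n : (n,k)\in A\times\bN\} = A$ and $\{n : (n,k)\in J\} = \Sl_k(J)$. Membership in $F_p$ is even automatic: $\pi_1\circ\iota_k = \id$ gives $\beta\pi_1(\beta\iota_k(p)) = p$.

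Your route needs no appeal to the ultrafilter lemma at this step, and it reveals more than the paper's proof. Any $u$ containing $\bN\times\{k\}$ equals $\beta\iota_k(q)$ with $q = \{B : B\times\{k\}\in u\}$. If moreover $u\in F_p$, then $q=p$. Hence $C_k = \{\beta\iota_k(p)\}$ is a single point, and the paper's $u_\cF$ necessarily coincides with your $u$. The whole lemma therefore reduces to the one-line equivalence $J\in\beta\iota_k(p) \iff \Sl_k(J)\in p$, which gives both directions at once.

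The paper's filter-extension argument has the advantage of being the same template it reuses in the next lemma. There, a point of $F_p\cap R^*$ is again produced by extending $\{A\times\bN : A\in p\}\cup\{R\}$ via Tarski's theorem.
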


\begin{proof}
  First, let $\Sl_k(J) \in p$. Let $u \in C_k = F_p \cap (\bN \times \{k\})^*$. Then, by definition, $u \in F_p$ and $\bN \times \{k\} \in u$. By \cref{lemma:description of fp}, since $\Sl_k(J) \in p$ and $u \in F_p$, we have $\Sl_k(J) \times \bN \in u$. As filters are closed under finite intersections, we have
\[ \Sl_k(J) \times \{k\} = (\Sl_k(J) \cap \bN) \times (\bN \cap \{k\}) = (\Sl_k(J) \times \bN) \cap (\bN \times \{k\}) \in u. \]
But, by definition of slices, $\Sl_k(J) \times \{k\} \subseteq J$, so by upward closure we also have $J \in u$ or, equivalently,  $u \in J^*$. Hence $u \in F_p \cap J^*$, and we are done.

\medskip \noindent Now, on the other side, suppose $\Sl_k(J) \notin p$. We construct an ultrafilter in $C_k$ that is not in  $F_p \cap J^*$. Since $p$ is an ultrafilter, $B \coloneqq \bN \setminus \Sl_k(J) = \{n \in \bN \mid (n,k) \notin J\} \in p$. Note that by construction 
\begin{equation} \label{eq:intersection empty}
    (B \times \{k\}) \cap J = \emptyset.
\end{equation}

\noindent Consider the family of subsets of $\bN \times \bN$:
\[ \cF \coloneqq \{A \times \bN : A \in p\} \cup \{\bN \times \{k\}, B \times \{k\} \}. \]

\noindent Observe that $\cF$ is closed under finite intersections. Indeed, we only need to consider intersections of the form $(A \times \bN) \cap (\bN \times \{k\})$ and $(A \times \bN) \cap (B \times \{k\})$. We only consider the second case, and the first is analogous. Here, for $A \in p$, we have
\[ (A \times \bN) \cap (B \times \{k\}) = (A \cap B) \times \{k\} \neq \emptyset \]
since $A \cap B \in p$, and hence non-empty. Therefore, $\cF$ has the finite intersection property and extends to an ultrafilter $u_\cF$ on $\bN \times \bN$ by Tarski's theorem (\cref{lemma:non-principal ultrafilters}).

\medskip \noindent By construction, $u_\cF$ contains $A \times \bN$ for all $A \in p$, so $u_\cF \in F_p$ by \cref{lemma:description of fp}. Also $\bN \times \{k\} \in u_\cF$, so $u_\cF \in C_k = F_p \cap (\bN \times \{k\})^*$. On the other hand, $B \times \{k\} \in u_\cF$ and $(B \times \{k\}) \cap J = \emptyset$  by \cref{eq:intersection empty}, so $J \notin u_\cF$, in other words, $u_\cF \notin J^*$. Thus $u_\cF \in C_k$ and $u_\cF \notin F_p \cap J^*$, and we are done.
\end{proof}

\noindent We now fix one open subset in $F_p$.

\begin{notn} \label{not:up}
 We will denote the union $\bigcup_{k \in \bN} C_k$ by $U_p \coloneqq \bigcup_{k \in \bN} C_k$.
\end{notn}

\noindent It is immediate that $U_p$ is open in $F_p$, as a union of the open subsets $C_k$. We are now ready to state the crucial lemma.

\begin{lemma}
 Let $p$ be a non-principal ultrafilter on $\bN$. Then for every clopen subset $V \subseteq F_p$, such that $U_p \subseteq V$, there exists a clopen subset $V'$ of $F_p$, such that $U_p \subseteq V' \subsetneq V$.
\end{lemma}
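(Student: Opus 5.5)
The plan is to translate everything into subsets of $\bN \times \bN$ using \cref{lemma:clopen} and \cref{lemma:slices}, and then to shrink $V$ by removing a carefully chosen ``thin diagonal'' set.

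\emph{Step 1: describe $V$ by a subset $J$.} Since $F_p$ is a closed subspace of the compact, zero-dimensional space $\beta(\bN \times \bN)$, every clopen subset of $F_p$ is the trace of a clopen subset of $\beta(\bN \times \bN)$. To see this, cover $V$ and $F_p \setminus V$ by basic clopens $A^*$ that are disjoint from the other piece, and use compactness to pass to finite unions. By \cref{lemma:clopen} we may therefore write $V = F_p \cap J^*$ for some $J \subseteq \bN \times \bN$. The hypothesis $U_p \subseteq V$ says that $C_k \subseteq F_p \cap J^*$ for every $k$. By \cref{lemma:slices} this means $\Sl_k(J) \in p$ for all $k \in \bN$.

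\emph{Step 2: construct $D \subseteq J$.} Put $E_m \coloneqq \bigcap_{k \le m} \Sl_k(J)$; each $E_m$ lies in $p$. For $n \in E_0$, let $m(n)$ be the largest $m \le n$ with $n \in E_m$, and set
\[ D \coloneqq \{(n, m(n)) : n \in E_0\} \subseteq J, \qquad J' \coloneqq J \setminus D. \]
The slices of $D$ are small: if $m(n) = k$, then either $n = k$ or $n \notin E_{k+1}$. Hence
\[ \Sl_k(D) \subseteq (\bN \setminus E_{k+1}) \cup \{k\}. \]
Because $p$ is non-principal, $\{k\} \notin p$, so $\Sl_k(D) \notin p$. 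This is exactly where non-principality enters. Consequently $\Sl_k(J') = \Sl_k(J) \setminus \Sl_k(D) \in p$ for every $k$. By \cref{lemma:slices}, $V' \coloneqq F_p \cap J'^*$ is a clopen subset of $F_p$ with $U_p \subseteq V' \subseteq V$.

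\emph{Step 3: strictness.} It remains to find $u \in F_p$ with $D \in u$. Such a $u$ contains $J$ and not $J'$, since $J' \cap D = \emptyset$, so $u \in V \setminus V'$. Consider the family $\{A \times \bN : A \in p\} \cup \{D\}$. It has the finite intersection property: $p$ is closed under intersections, and $(A \times \bN) \cap D \supseteq \{(n, m(n)) : n \in A \cap E_0\}$, which is non-empty because $A \cap E_0 \in p$. By Tarski's theorem (\cref{lemma:non-principal ultrafilters}) this family extends to an ultrafilter $u$. Then $u \in F_p$ by \cref{lemma:description of fp}, since $u$ contains $A \times \bN$ for every $A \in p$, and $u$ also fails to contain $A \times \bN$ for $A \notin p$ because it contains the complement.

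\emph{Main obstacle.} The delicate point is the choice of $D$. It must be large enough to meet every $A \times \bN$ with $A \in p$, yet every one of its slices must fall outside $p$. A naive choice such as $\{(n,k) \in J : k \ge n\}$ can fail, for example when $J$ lies entirely below the diagonal. The ``maximal level'' choice $m(n)$ above addresses this: each $n \in E_0$ contributes exactly one point of $D$, while slice $k$ of $D$ can only contain $k$ itself or indices escaping $E_{k+1}$.
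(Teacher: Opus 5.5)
Your proof is correct and is essentially the paper's argument. The paper sets $B_k = T_k \cap \bigcap_{j \le k} \Sl_j(J)$ with $T_k = \{n \ge k\}$ and removes $R = \bigcup_k (B_k \setminus B_{k+1}) \times \{k\}$, which is exactly your set $D$ (the graph of your ``maximal level'' function $m$), and it gets strictness by extending $\{A \times \bN : A \in p\} \cup \{R\}$ to an ultrafilter just as you do. The only differences are that non-principality enters through $T_k \in p$ rather than $\{k\} \notin p$, and that you also justify why every clopen of $F_p$ has the form $F_p \cap J^*$, which the paper takes for granted.
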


\begin{proof}
 Let $V = F_p \cap J^*$ for some $J \subseteq \bN \times \bN$, such that $U_p \subseteq V$. Then, $\Sl_k(J) \in p$ for all $k \in \bN$ by \cref{lemma:slices}. By \cref{lemma:non-principal ultrafilters}, we have $T_k \coloneqq \{n \in \bN \mid n \geq k\} \in p$ for all $k \in \bN$. This means for all $k \in \bN$ that
 \[B_k \coloneqq T_k \cap \bigcap_{0 \leq j \leq k} \Sl_j(J) \in p,\]
 as filters are closed under finite intersections. Note that $\bigcap_{k \in \bN} B_k \subseteq \bigcap_{k \in \bN} T_k = \emptyset$. Now, for all $k \in \bN$, let $E_k \coloneqq B_k \setminus B_{k+1}$. These sets have the following properties:
 \begin{itemize}[leftmargin=*]
    \item First, $E_k \subseteq B_k \subseteq \Sl_k(J)$, so, by definition of slices, $E_k \times \{k\} \subseteq J$.
    \item Next, $E_k \not\in p$, as $B_{k+1} \in p$ and $B_{k+1} \cap E_k = \emptyset$, and an ultrafilter cannot contain the empty set.
    \item Lastly, $\bigcup_{k \in \bN} E_k = \bigcup_{k \in \bN} B_k \setminus B_{k+1} = B_0 \in p$.
 \end{itemize}  
 Now, let $R \coloneqq \bigcup_{k \in \bN} (E_k \times \{k\}) \subseteq J$, and write $J' \coloneqq J \setminus R$. By definition, $V' \coloneqq F_p \cap (J')^*$ is clopen in $F_p$. To finish the proof, we show that $U_p \subseteq V'$ and $V' \subsetneq V$.
 
 \medskip \noindent We start by checking $U_p \subseteq V'$. Fix a $k \in \bN$. By construction $\Sl_k(J') = \Sl_k(J) \setminus E_k$. Since $E_k \not\in p$, we have 
 \[ \Sl_k(J') = \Sl_k(J) \setminus E_k  = \Sl_k(J) \cap (\bN \setminus E_k) \in p. \] Hence, $U_p \subseteq V'$ by \cref{lemma:slices}.
 
 \medskip \noindent Now, we discuss $V' \subsetneq V$. By definition $J' = J \setminus R$ and so 
 \[V' = F_p \cap (J')^* = F_p \cap (J \setminus R)^* \subseteq (F_p \cap J^*) \setminus (F_p \cap R^*) \subseteq F_p \cap J^* = V.\] 
 From the previous line it also follows that the inclusion is not an equality precisely when $F_p \cap R^* \neq \emptyset$.
 
 \medskip \noindent Let $\cF = \{A \times \bN \mid A \in p\} \cup \{R\}$. Observe that $\cF$ is closed under finite intersections. Indeed, we only need to consider intersections of the form $(A \times \bN) \cap R$. However, we have the computation
  \[\pi_1((A \times \bN) \cap R) = A \cap \pi_1(R) = A \cap \bigcup_{k \in \bN} E_k = A \cap B_0 \in p.\]
 As $\pi_1((A \times \bN) \cap R)$ is non-empty, it hence follows that $(A \times \bN) \cap R$ is non-empty as well. So we may extend $\cF$ to an ultrafilter $u_\cF$ on $\bN \times \bN$ by Tarski's theorem (\cref{lemma:non-principal ultrafilters}). By \cref{lemma:description of fp}, $u_\cF \in F_p$, and, by definition, $R \in u_\cF$, meaning $u_\cF \in R^*$. Hence, $u_\cF \in F_p \cap R^*$ and we are done.
\end{proof}

\noindent This lemma and the universal property of closures immediately has the following implication.

\begin{cor} \label{cor:closure not open}
    Let $p$ be a non-principal ultrafilter on $\bN$. Then $\overline{U}_p$ (\cref{not:up}) is not open in $F_p$.
\end{cor}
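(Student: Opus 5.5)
We argue by contradiction, using the clopen basis of $F_p$. Suppose $\overline{U}_p$ were open in $F_p$. It is closed by definition, so it would be clopen. By \cref{lemma:clopen} and the fact that $F_p$ carries the subspace topology from $\beta(\bN\times\bN)$, every clopen subset of $F_p$ has the form $F_p \cap J^*$ for some $J \subseteq \bN\times\bN$. This needs a short justification, since a priori \cref{lemma:clopen} only describes clopens of $\beta(\bN\times\bN)$.

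To justify it, let $W \subseteq F_p$ be clopen. Then $W$ is compact, and it is covered by basic open sets of the form $F_p \cap A^*$ contained in $W$; these exist because the $A^*$ form a basis. Take a finite subcover and set $J$ to be the union of the corresponding sets $A$. Since $(A_1\cup A_2)^* = A_1^*\cup A_2^*$ for ultrafilters, we get $W = F_p\cap J^*$. The lemma just proved implicitly uses this description too, since it writes $V = F_p \cap J^*$.

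Now set $V \coloneqq \overline{U}_p$, which is a clopen subset of $F_p$ containing $U_p$. The preceding lemma applies, since $p$ is non-principal. It produces a clopen $V'$ with $U_p \subseteq V' \subsetneq V$. Because $V'$ is closed and contains $U_p$, the universal property of closure gives $\overline{U}_p \subseteq V'$, that is, $V \subseteq V' \subsetneq V$. This is a contradiction.

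There is no real obstacle here. All the content sits in the previous lemma, and the only point needing care is the reduction of arbitrary clopens of $F_p$ to the form $F_p\cap J^*$, which is the compactness argument above.
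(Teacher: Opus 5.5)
Your proposal is correct and is the paper's argument: take $V=\overline{U}_p$, apply the preceding lemma to get a clopen $V'$ with $U_p\subseteq V'\subsetneq V$, and use that $V'$ is closed and contains $U_p$ to reach a contradiction. Your compactness argument showing that every clopen subset of $F_p$ has the form $F_p\cap J^*$ is sound, and it fills in a point the paper leaves implicit in the proof of the preceding lemma.
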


\noindent We can now combine all these steps, to conclude the proof of \cref{thm:no fibers}.

\begin{proof}[Proof of \cref{thm:no fibers}]
    By \cref{lemma:limit in extrdisc}, if the fiber exists, then it needs to be the fiber in $\CHaus$. Hence, it suffices to show that the compact Hausdorff space $F_p = (\beta \pi_1)^{-1}(p)$ is not extremally disconnected. Now, by \cref{cor:closure not open}, there is an open subset $U_p \subseteq F_p$, such that $\overline{U}_p$ is not open in $F_p$, thus $F_p$ is not extremally disconnected.
\end{proof} 

\begin{rem} \label{remark: nonprincipal crucial}
  The assumption that $p$ is non-principal is crucial. Indeed, let us choose a principal ultrafilter $p$ on $m \in \bN$. Then, by \cref{lemma:description of fp}, an ultrafilter $u$ is in the fiber $F_p$ if it contains $A \times \bN$, for $A \in p$, which in this case reduces to containing $\{m\} \times \bN$. This means the fiber is precisely $(\{m\}\times\bN)^*$. By \cref{lemma:clopen}, $(\{m\}\times\bN)^*$ is clopen in $\beta(\bN \times \bN)$, and so it is again extremally disconnected (\cref{rem:extrDisc is closed under open subspaces}). 
\end{rem}
  
  \noindent In fact, we can describe the fiber more concretely. Let $(\{m\},\id)\colon \bN \to \bN \times \bN$ be the map sending $n$ to $(m,n)$. 

\begin{prop}
 The map $\beta(\{m\},\id)\colon \beta \bN \to \beta (\bN \times \bN)$ is an injection with image the fiber of $\beta\pi_1$ over the principal ultrafilter on $m$. Hence, the fiber is homeomorphic to $\beta\bN$.
\end{prop}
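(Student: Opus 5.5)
Write $\iota_m \coloneqq (\{m\},\id)\colon \bN \to \bN \times \bN$. Since $\pi_1 \circ \iota_m$ is the constant map at $m$, functoriality of $\beta$ gives that $\beta\pi_1 \circ \beta\iota_m$ is constant at the principal ultrafilter $u_m$ on $m$. Hence $\beta\iota_m$ lands in the fiber $F_{u_m}$. By \cref{remark: nonprincipal crucial} this fiber is exactly $(\{m\}\times\bN)^*$. The plan is to show that $\beta\iota_m$ is injective with image equal to $(\{m\}\times\bN)^*$. Since $\beta\iota_m$ is a continuous map between compact Hausdorff spaces, the homeomorphism onto the fiber then follows from \cite[Theorem 26.6]{munkres2000topology}, exactly as in the proof of \cref{lemma:limit in extrdisc}.

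Concretely, for an ultrafilter $v$ on $\bN$, the pushforward is $\beta\iota_m(v) = \{ J \subseteq \bN\times\bN : \iota_m^{-1}(J) \in v\}$, and $\iota_m^{-1}(J) = \Sl$-type set $\{n : (m,n)\in J\}$. For injectivity, take $v \neq v'$. Then there is some $A \in v$ with $\bN \setminus A \in v'$. In this case $\{m\}\times A \in \beta\iota_m(v)$, since $\iota_m^{-1}(\{m\}\times A) = A$, while its complement $\iota_m^{-1}$-pulls back to $\bN\setminus A \in v'$. So the two images differ.

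For surjectivity onto $(\{m\}\times\bN)^*$, let $u$ be an ultrafilter containing $\{m\}\times\bN$ and define $v \coloneqq \{A \subseteq \bN : \{m\}\times A \in u\}$. One checks directly that $v$ is an ultrafilter, using that $\iota_m$ is a bijection onto $\{m\}\times\bN$ and that $\{m\}\times\bN \in u$. It remains to show $\beta\iota_m(v) = u$, and both are ultrafilters, so it suffices to show $u \subseteq \beta\iota_m(v)$. Given $J \in u$, we have $J \cap (\{m\}\times\bN) = \{m\}\times \iota_m^{-1}(J) \in u$. Hence $\iota_m^{-1}(J) \in v$, that is, $J \in \beta\iota_m(v)$. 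Alternatively, \cref{lemma:description of fp} can be used to rephrase membership in the fiber.

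The only delicate point is that this bookkeeping is routine once it is unwound. The main thing to watch is that the fiber has been correctly identified with $(\{m\}\times\bN)^*$, which requires the converse direction of \cref{lemma:description of fp}. If $\{m\}\times\bN \in u$, then for every $A$, $A\times\bN \in u$ holds if and only if $m \in A$, because $(A\times\bN)\cap(\{m\}\times\bN)$ is either all of $\{m\}\times\bN$ or empty. This says exactly that $\beta\pi_1(u) = u_m$.
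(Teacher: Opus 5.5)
Your proposal is correct and follows essentially the same route as the paper: the same ultrafilter $v=\{A\subseteq\bN : \{m\}\times A\in u\}$ for surjectivity, and \cite[Theorem 26.6]{munkres2000topology} to upgrade the continuous bijection onto the fiber to a homeomorphism. The only differences are minor---you verify injectivity directly and land in the fiber via $\beta\pi_1\circ\beta\iota_m=\beta(\pi_1\circ\iota_m)$, which also avoids the paper's slip of calling the map a section of $\beta\pi_1$ (it is actually a section of $\beta\pi_2$, with $\pi_2$ the second projection).
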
  

\begin{proof}
 First of all, the map $\beta(\{m\},\id)\colon \beta \bN \to \beta (\bN \times \bN)$ is an injection, as it is the section of $\beta \pi_1$. Now, let $u_0$ be any ultrafilter on $\bN$. Then,
  \begin{align*}
 	\beta(\{m\},\id)(u_0) & = \{ A \subseteq \bN \times \bN \mid (\{m\},\id)^{-1}(A) \in u_0\} \\ 
 	& =\{ A \subseteq \bN \times \bN \mid \{k \in \bN \mid (m,k) \in A\} \in u_0\}.
 \end{align*}  
 This contains $\{m \} \times \bN$. So by \cref{remark: nonprincipal crucial} the image of $\beta(\{m \}, \id)$ is contained in the fiber of $\beta \pi_1$ over the principal ultrafilter on $m$. We now check that it is surjective. Let $u$ be an ultrafilter that contains $\{m \} \times \bN$. Setting $u_0 \coloneqq \{B \subseteq \bN \mid \{m\} \times B \in u\}$, it is non-empty because it contains $\bN$, and it evidently satisfies the remaining conditions of an ultrafilter. Using the above computation of $\beta(\{m \}, \id)(u_0)$ we obtain 
 \[ \beta(\{m\},\id)(u_0) = \{ A \subseteq \bN \times \bN \mid \{k \in \bN \mid (m,k) \in A\} \in u_0\} = u, \]
 meaning $u$ is in the image of $\beta(\{m \}, \id)$. Finally, it follows from \cite[Theorem 26.6]{munkres2000topology} that $\beta(\{m\},\id)$ is a homeomorphism onto its image.
\end{proof}

\noindent Let us end with one additional observation regarding limits in $\ExtrDisc$, which is not required for our aims, but can be of independent interest. Besides \cref{cor:pullbacks in extrdisc}, it is the only limit we can obtain in $\ExtrDisc$. Here we fundamentally rely on \cite{givanthalmos2009boolean}.

\begin{prop}
 The category $\ExtrDisc$ is closed under intersection. 
\end{prop}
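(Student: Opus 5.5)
I read the statement as follows: given $S \in \ExtrDisc$ and two closed subspaces $A, B \subseteq S$ which are extremally disconnected, the intersection $A \cap B$ (equivalently the pullback $A \times_S B$ of the two injections, computed in $\CHaus$ by \cref{lemma:limit in extrdisc}) is again extremally disconnected. Hence it is the pullback in $\ExtrDisc$. The statement has to be limited to finitely many subspaces. Arbitrary intersections fail: the fiber $F_p$ of \cref{thm:no fibers} is the intersection of the clopen, hence extremally disconnected, subspaces $(A \times \bN)^*$ for $A \in p$, and $F_p$ is not extremally disconnected.

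The plan is to pass to Stone duality, following \cite{givanthalmos2009boolean}. A profinite set $X$ is extremally disconnected if and only if its Boolean algebra $\Clopen_X$ is complete. A closed subspace $A \subseteq S$ corresponds to a Boolean surjection $\Clopen_S \to \Clopen_S / I_A$, where $I_A = \{W \in \Clopen_S : W \cap A = \emptyset\}$. For the intersection, $A \cap B$ corresponds to the quotient by the ideal $I_A \vee I_B$, since a clopen set misses $A \cap B$ exactly when, by compactness, it splits into a piece missing $A$ and a piece missing $B$. So the claim becomes the following: if $\Clopen_S$, $\Clopen_S/I_A$ and $\Clopen_S/I_B$ are all complete, then so is $\Clopen_S/(I_A \vee I_B)$.

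The steps, in order, are these.
\begin{enumerate}[label=(\arabic*), leftmargin=*]
\item Show that $A$ and $B$ are retracts of $S$. By Sikorski's theorem, complete Boolean algebras are injective, so the identity of $\Clopen_A$ extends along $\Clopen_S \to \Clopen_A$ to a Boolean section $\sigma_A \colon \Clopen_A \to \Clopen_S$. Dually this gives a retraction $r_A \colon S \to A$, and similarly for $B$.
\item Reduce to a separation criterion. A compact Hausdorff space is extremally disconnected if and only if any two disjoint open sets have disjoint closures. So take disjoint open $U, V \subseteq A \cap B$ and aim to show $\overline{U} \cap \overline{V} = \emptyset$.
\item Build the separating clopen. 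Compute the supremum $a \in \Clopen_A$ of all clopens of $A$ meeting $A \cap B$ inside $U$, using completeness of $\Clopen_A$. Transport $a$ to $S$ via $\sigma_A$. Then do the same in $B$, applied to the trace of $\sigma_A(a)$ on $B$, using completeness of $\Clopen_B$. The result is a clopen subset of $S$ whose restriction to $A \cap B$ contains $\overline{U}$ and misses $V$.
\end{enumerate}

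Step (3) is the main obstacle. The two suprema are computed in different algebras, and the difficulty is to check that the back-and-forth does not pick up points of $\overline{V}$. The compositions $r_A \circ r_B$ need not land in $A \cap B$, so some care is needed. What I would try first is to iterate the back-and-forth countably often and use completeness of $\Clopen_S$ to take the final supremum. Alternatively, I would look for the relevant statement in \cite{givanthalmos2009boolean}: that the join of two ideals $I_A, I_B$ with complete quotients again has a complete quotient. I expect this to reduce to the fact that the ideals with complete quotient in a complete Boolean algebra are exactly the ideals whose elements have a join within the ideal, that is, the principal ideals. If that holds, $I_A = {\downarrow} s$ and $I_B = {\downarrow} t$, so $I_A \vee I_B = {\downarrow}(s \vee t)$ is again principal and the quotient is the complete algebra ${\downarrow} \neg(s \vee t)$. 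That route may well be the cleanest way to finish.
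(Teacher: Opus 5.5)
\textbf{There is a genuine gap: the lemma your preferred finish relies on is false, and your main route does not close the argument either.}

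\medskip\noindent\textbf{The fallback.} Your reformulation is correct, and so is your remark that infinite intersections fail. With $I_A, I_B \subseteq \Clopen_S$ the ideals of clopens missing $A$ resp.\ $B$, the claim is that $\Clopen_S/(I_A\vee I_B)$ is complete whenever $\Clopen_S$, $\Clopen_S/I_A$ and $\Clopen_S/I_B$ are. But in a complete Boolean algebra the ideals with complete quotient are \emph{not} just the principal ones. Principal ideals ${\downarrow}s$ correspond exactly to clopen subspaces, and a closed extremally disconnected subspace need not be clopen.

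\medskip\noindent For a counterexample, take $S=\beta\bN$ and a point $p\in\beta\bN\setminus\bN$. Then $\{p\}$ is closed and extremally disconnected. Its ideal $\{Y\subseteq\bN : Y\notin p\}$ is a non-principal maximal ideal: it contains every finite set, so its join $\bN$ does not lie in it. Yet the quotient is $\{0,1\}$, which is complete. Closures of countable discrete subsets of $\beta\bN\setminus\bN$, which are copies of $\beta\bN$, give infinite examples. So the computation ${\downarrow}s\vee{\downarrow}t={\downarrow}(s\vee t)$ only proves that two clopen sets meet in a clopen set.

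\medskip\noindent This fallback is essentially the paper's route: Stone duality, extremally disconnected subspaces corresponding to quotients by complete ideals, and joins of complete ideals being complete. Take complete ideals in the usual sense, namely closed under all existing joins, which in a complete algebra means principal. Then the same one-point example shows that this dictionary also reaches only clopen subspaces. So you cannot lean on the paper's citation chain to supply the missing step.

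\medskip\noindent\textbf{The main route.} Step (1) misuses Sikorski's theorem. Sikorski extends homomorphisms \emph{into} the complete algebra $\Clopen_A$ along embeddings. A retraction $S\to A$ instead requires a section of the surjection $\Clopen_S\to\Clopen_A$, and such sections need not exist. Here is an example. Map the free generators of the subalgebra of $\mathcal{P}(\bN)$ spanned by an independent family of size $2^{\aleph_0}$ onto the Lebesgue measure algebra, which is complete and of size $2^{\aleph_0}$. Sikorski then extends this to a surjection $\mathcal{P}(\bN)\to$ (measure algebra). Hence the Stone space of the measure algebra is a closed extremally disconnected subspace of $\beta\bN$. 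It is not separable, so it is not a continuous image of $\beta\bN$, let alone a retract.

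\medskip\noindent Step (3) is left open, and it carries the entire content of the statement. You need an actual argument that least upper bounds exist in $\Clopen_S/(I_A\vee I_B)$ when $I_A$ and $I_B$ are non-principal. Equivalently, you must show that disjoint relatively open subsets of $A\cap B$ have disjoint closures. Neither the principal-ideal shortcut nor retractions provide this. As it stands, the proposal does not prove the proposition.
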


\begin{proof}
 By \cite[Theorem 39]{givanthalmos2009boolean}, the Stone duality between profinite sets and Boolean algebras restricts to a duality between extremally disconnected spaces and complete Boolean algebras, meaning every subset has both a supremum and an infimum. Moreover, by \cite[Theorem 37]{givanthalmos2009boolean}, closed subspaces correspond to quotients of the Boolean algebra, and, by \cite[Exercise 24.12]{givanthalmos2009boolean}, quotients by complete ideals correspond to extremally disconnected subspaces. Additionally, by \cite[Theorem 33]{givanthalmos2009boolean}, the quotient by the join of two ideals corresponds to the intersection of the corresponding subspaces. Finally, by \cite[Theorem 21]{givanthalmos2009boolean}, the join of two complete ideals is again a complete ideal. Hence, we are done.
\end{proof}

\bibliographystyle{alpha}
\bibliography{main}

\end{document}